\numberwithin{equation}{section}
\newtheoremstyle{mystyle}
{}
{}
{\normalfont}
{ }
{\bfseries}
{}
{10pt}
{ }
\theoremstyle{mystyle}
\newtheorem{theorem}{Theorem}
\newtheorem{proposition}{Proposition}
\newtheorem{lemma}{Lemma} 
\newtheorem{remark}{Remark}
\def\Diag{\mathop{\rm Diag}\nolimits}
\def\tr{\mathop{\rm tr}\nolimits}
\def\vec{\mathop{\rm vec}\nolimits}
\def\vech{\mathop{\rm vech}\nolimits}
\def\det{\mathop{\rm det}\nolimits}
\def\rank{\mathop{\rm rank}\nolimits}
\title[Akaike-type information criterion of SEM for diffusion processes with jumps]{Akaike-type information criterion of SEM for jump-diffusion processes based on high-frequency data}
\author[S. Kusano]{Shogo Kusano $^{1}$}
\author[M. Uchida]{Masayuki Uchida $^{2,3,4}$}
\address{$^{1}$ Faculty of Advanced Science and Technology, Kumamoto University, Kumamoto, Japan}
\address{$^{2}$ Graduate School of Engineering Science, The University of Osaka, Toyonaka, Japan}
\address{$^{3}$ Center for Mathematical Modeling and Data Science (MMDS), The University of Osaka, Toyonaka, Japan}
\address{$^{4}$ CREST, Japan Science and Technology Agency, Tokyo, Japan}
\begin{document}
\begin{abstract}
\fontsize{8pt}{10pt}\selectfont
Structural equation modeling (SEM) is a statistical method used to investigate relationships among latent variables. In SEM, the model must be specified in advance. However, in practice, statisticians often have several candidate models and need to select the most appropriate one. Consequently, model selection is a key issue in SEM, and information criteria are commonly used to address this issue. In this study, we develop an Akaike-type information criterion of SEM for jump-diffusion processes, which enables model selection for SEM based on high-frequency data with jumps. Simulation studies are conducted to illustrate the finite-sample performance of the proposed method.
\end{abstract}
\keywords{Structural equation modeling; jump-diffusion processes; Akaike information criterion; High-frequency data.}
\maketitle

\section{Introduction}
\fontsize{10pt}{16pt}\selectfont
\setlength{\abovedisplayskip}{8pt}
\setlength{\belowdisplayskip}{8pt}
We consider a model selection problem of structural equation modeling (SEM) for jump-diffusion processes. Let $(\Omega, \mathcal{F},(\mathcal{F}_t)_{t\geq 0}, {\bf{P}})$ be a stochastic basis. First, we set the true model. The $p_1$ and $p_2$-dimensional observable processes $\{X_{1,0,t}\}_{t\geq 0}$ and $\{X_{2,0,t}\}_{t\geq 0}$ are defined by the following factor models:
\begin{align*}
    X_{1,0,t}&={\bf{\Lambda}}_{1,0}\xi_{0,t}+\delta_{0,t},\\
    X_{2,0,t}&={\bf{\Lambda}}_{2,0}\eta_{0,t}+\varepsilon_{0,t},
\end{align*}
where $\{\xi_{0,t}\}_{t\geq 0}$, $\{\delta_{0,t}\}_{t\geq 0}$, $\{\eta_{0,t}\}_{t\geq 0}$ and $\{\varepsilon_{0,t}\}_{t\geq 0}$ are $k_1$, $p_1$, $k_2$ and $p_2$-dimensional c{\`a}dl{\`a}g $(\mathcal{F}_t)$-adapted latent processes on the stochastic basis, $k_1\leq p_1$, $k_2\leq p_2$, and ${\bf{\Lambda}}_{1,0}\in\mathbb{R}^{p_1\times k_1}$ and 
${\bf{\Lambda}}_{2,0}\in\mathbb{R}^{p_2\times k_{2}}$ are constant loading matrices. 
We assume that the relationship between $\{\xi_{0,t}\}_{t\geq 0}$ and $\{\eta_{0,t}\}_{t\geq 0}$ 
is as follows:
\begin{align*}
    \eta_{0,t}={\bf{B}}_0\eta_{0,t}+{\bf{\Gamma}}_0\xi_{0,t}+\zeta_{0,t},
\end{align*}
where $\{\zeta_{0,t}\}_{t\geq 0}$ is a $k_2$-dimensional c{\`a}dl{\`a}g $(\mathcal{F}_t)$-adapted latent process on the stochastic basis, ${\bf{B}}_0\in\mathbb{R}^{k_2\times k_2}$ is a constant loading matrix whose diagonal elements are zero, and ${\bf{\Gamma}}_0\in\mathbb{R}^{k_2\times k_1}$ is a constant loading matrix.
It is supposed that ${\bf{\Lambda}}_{1,0}$ and ${\bf{\Lambda}}_{2,0}$ are of full column rank, and that ${\bf{\Psi}}_0=\mathbb{I}_{k_2}-{\bf{B}}_0$ is non-singular, where $\mathbb{I}_{k_2}$ is the identity matrix of size $k_2$. The stochastic processes $\{\xi_{0,t}\}_{t\geq 0}$, 
$\{\delta_{0,t}\}_{t\geq 0}$, $\{\varepsilon_{0,t}\}_{t\geq 0}$ and $\{\zeta_{0,t}\}_{t\geq 0}$ are assumed to satisfy the following stochastic differential equations:
\begin{align*}
    d\xi_{0,t}&=a_{1}(\xi_{0,t-})dt+{\bf{S}}_{1,0}d W_{1,t}+\int_{E_1}c_1(\xi_{0,t-},z_1)p_1(dt,dz_1),\quad \xi_{0,0}=x_{1,0},\\
    d\delta_{0,t}&=a_{2}(\delta_{0,t-})dt+{\bf{S}}_{2,0}d W_{2,t}+\int_{E_2}c_2(\delta_{0,t-},z_2)p_2(dt,dz_2),\quad \delta_{0,0}=x_{2,0},\\
    d\varepsilon_{0,t}&=a_{3}(\varepsilon_{0,t-})dt+{\bf{S}}_{3,0}d W_{3,t}+\int_{E_3}c_3(\varepsilon_{0,t-},z_3)p_3(dt,dz_3),\quad \varepsilon_{0,0}=x_{3,0},\\
    d\zeta_{0,t}&=a_{4}(\zeta_{0,t-})dt+{\bf{S}}_{4,0}d W_{4,t}+\int_{E_4}c_4(\zeta_{0,t-},z_4)p_4(dt,dz_4),\quad \zeta_{0,0}=x_{4,0}, 
\end{align*}
where $E_1=\mathbb{R}^{k_1} \backslash \{0\}$, $E_2=\mathbb{R}^{p_1} \backslash \{0\}$, 
$E_3=\mathbb{R}^{p_2} \backslash \{0\}$ and $E_4=\mathbb{R}^{k_2} \backslash \{0\}$. The initial values $x_{1,0}\in\mathbb{R}^{k_1}$, $x_{2,0}\in\mathbb{R}^{p_1}$, $x_{3,0}\in\mathbb{R}^{p_2}$ and $x_{4,0}\in\mathbb{R}^{k_2}$ are 
non-random vectors. For $i=1,2,3,4$, $\{W_{i,t}\}_{t\geq 0}$ is an $r_i$-dimensional standard $(\mathcal{F}_t)$-Wiener process, and $p_i(dt,dz_i)$ is a Poisson random measure on $\mathbb{R}_{+}\times E_i$ with the compensator $q_i(dt,dz_i)={\bf E_P}\bigl[p_i(dt,dz_i)\bigr]$.
The functions $a_1:\mathbb{R}^{k_1}\longrightarrow\mathbb{R}^{k_1}$, $a_2:\mathbb{R}^{p_1}\longrightarrow\mathbb{R}^{p_1}$, 
$a_3:\mathbb{R}^{p_2}\longrightarrow\mathbb{R}^{p_2}$, $a_4:\mathbb{R}^{k_2}\longrightarrow\mathbb{R}^{k_2}$, 
$c_1:\mathbb{R}^{k_1}\times E_1\longrightarrow\mathbb{R}^{k_1}$,
$c_2:\mathbb{R}^{p_1}\times E_2\longrightarrow\mathbb{R}^{p_1}$,
$c_3:\mathbb{R}^{p_2}\times E_3\longrightarrow\mathbb{R}^{p_2}$ and
$c_4:\mathbb{R}^{k_2}\times E_4\longrightarrow\mathbb{R}^{k_2}$
are unknown Borel functions, and ${\bf{S}}_{1,0}\in\mathbb{R}^{k_1\times r_1}$, ${\bf{S}}_{2,0}\in\mathbb{R}^{p_1\times r_2}$, ${\bf{S}}_{3,0}\in\mathbb{R}^{p_2\times r_3}$ and ${\bf{S}}_{4,0}\in\mathbb{R}^{k_2\times r_4}$ are constant coefficient matrices. Set the volatility matrices of $\{\xi_{0,t}\}_{t\geq 0}$, $\{\delta_{0,t}\}_{t\geq 0}$, $\{\varepsilon_{0,t}\}_{t\geq 0}$ and $\{\zeta_{0,t}\}_{t\geq 0}$ as ${\bf{\Sigma}}_{\xi\xi,0}={\bf{S}}_{1,0}{\bf{S}}_{1,0}^{\top}$, 
${\bf{\Sigma}}_{\delta\delta,0}={\bf{S}}_{2,0}{\bf{S}}_{2,0}^{\top}$, ${\bf{\Sigma}}_{\varepsilon\varepsilon,0}={\bf{S}}_{3,0}{\bf{S}}_{3,0}^{\top}$ and 
${\bf{\Sigma}}_{\zeta\zeta,0}={\bf{S}}_{4,0}{\bf{S}}_{4,0}^{\top}$, respectively. Here, $\top$ stands for the transpose of a matrix. Assume that for any $t\geq 0$, $\mathcal{F}_t$, $\sigma(W_{i,u}-W_{i,t};\ u\geq t) \ (i=1,2,3,4)$, and
\begin{align*}
    \sigma\bigl(p_i(A_i\cap ((t,\infty)\times E_i)); A_i\subset\mathbb{R}_{+}\times E_i \mbox{\ is a Borel set}\bigr)\quad (i=1,2,3,4)
\end{align*}
are independent. For $i=1,2,3,4$, we assume that $q_i(dt,dz_i)$ has a representation such that $q_i(dt,dz_i)=f_i(z_i)dz_idt$, and $f_i(z_i)=\lambda_{i,0}F_i(z_i)$, where $\lambda_{i,0}>0$ is an unknown value, and $F_i(z_i)$ is an unknown probability density. Let $p=p_1+p_2$ and $X_{0,t}=(X_{1,0,t}^{\top},X_{2,0,t}^{\top})^{\top}$. For simplicity, we write $X_{0,t}$ as $X_t$. $\mathbb{X}_n=\{X_{t_i^n}\}_{i=0}^n$ are discrete observations, where $t_i^n=ih_n$. We suppose that $T=nh_n$ is fixed. Define
\begin{align*}
    {\bf{\Sigma}}_0=\begin{pmatrix}
    {\bf{\Sigma}}_0^{11} & {\bf{\Sigma}}_0^{12}\\
    {\bf{\Sigma}}_0^{12\top} & {\bf{\Sigma}}_0^{22}
    \end{pmatrix},
\end{align*}
where
\begin{align*}
    {\bf{\Sigma}}_0^{11}&={\bf{\Lambda}}_{1,0}{\bf{\Sigma}}_{\xi\xi,0}{\bf{\Lambda}}_{1,0}^{\top}+{\bf{\Sigma}}_{\delta\delta,0},\\
    {\bf{\Sigma}}_0^{12}&={\bf{\Lambda}}_{1,0}{\bf{\Sigma}}_{\xi\xi,0}{\bf{\Gamma}}_0^{\top}{\bf{\Psi}}_0^{-1\top}{\bf{\Lambda}}_{2,0}^{\top},\\
    {\bf{\Sigma}}_0^{22}&={\bf{\Lambda}}_{2,0}{\bf{\Psi}}_0^{-1}({\bf{\Gamma}}_0{\bf{\Sigma}}_{\xi\xi,0}{\bf{\Gamma}}_0^{\top}+{\bf{\Sigma}}_{\zeta\zeta,0}){\bf{\Psi}}_0^{-1\top}{\bf{\Lambda}}_{2,0}^{\top}+{\bf{\Sigma}}_{\varepsilon\varepsilon,0}.
\end{align*}
Suppose that ${\bf{\Sigma}}_0$ is positive definite. Next, we define a parametric model of Model $m\in\{1,\ldots,M\}$. Set $\theta_m\in\Theta_m\subset\mathbb{R}^{q_m}$ as the parameter of Model $m$, where $\Theta_m$ is a bounded open subset of $\mathbb{R}^{q_m}$. 
Note that the parameter $\theta_m$ consists only of the unknown elements of the loading matrices and volatility matrices of the latent processes in Model $m$. See Kusano and Uchida \cite{Kusano(AIC), Kusano(BIC)} for further details. The stochastic processes $\{X^{\theta}_{1,m,t}\}_{t\geq 0}$ and $\{X^{\theta}_{2,m,t}\}_{t\geq 0}$ are characterized by the following factor models:
\begin{align*}
    X^{\theta}_{1,m,t}&={\bf{\Lambda}}^{\theta}_{1,m}\xi^{\theta}_{m,t}+\delta^{\theta}_{m,t}, \\
    X^{\theta}_{2,m,t}&={\bf{\Lambda}}^{\theta}_{2,m}\eta^{\theta}_{m,t}+\varepsilon^{\theta}_{m,t},
\end{align*}
where $\{\xi^{\theta}_{m,t}\}_{t\geq 0}$, $\{\delta^{\theta}_{m,t}\}_{t\geq 0}$, $\{\eta^{\theta}_{m,t}\}_{t\geq 0}$ and $\{\varepsilon^{\theta}_{m,t}\}_{t\geq 0}$ are $k_1$, $p_1$, $k_2$ and $p_2$-dimensional c{\`a}dl{\`a}g $(\mathcal{F}_t)$-adapted latent processes, and ${\bf{\Lambda}}^{\theta}_{1,m}\in\mathbb{R}^{p_1\times k_1}$ and 
${\bf{\Lambda}}^{\theta}_{2,m}\in\mathbb{R}^{p_2\times k_{2}}$ are constant loading matrices. Moreover, the relationship between $\{\eta^{\theta}_{m,t}\}_{t\geq 0}$ and $\{\xi^{\theta}_{m,t}\}_{t\geq 0}$ is given by
\begin{align*}
    \eta^{\theta}_{m,t}={\bf{B}}_m^{\theta}\eta^{\theta}_{m,t}+{\bf{\Gamma}}_m^{\theta}\xi^{\theta}_{m,t}+\zeta^{\theta}_{m,t},
\end{align*}
where $\{\zeta^{\theta}_{m,t}\}_{t\geq 0}$ is a $k_2$-dimensional c{\`a}dl{\`a}g $(\mathcal{F}_t)$-adapted latent process, ${\bf{B}}_m^{\theta}\in\mathbb{R}^{k_2\times k_2}$ is a constant loading matrix whose diagonal elements are zero, and ${\bf{\Gamma}}_m^{\theta}\in\mathbb{R}^{k_2\times k_1}$ is a constant loading matrix.
We assume that ${\bf{\Lambda}}^{\theta}_{1,m}$ and ${\bf{\Lambda}}^{\theta}_{2,m}$ are of full column rank, and that ${\bf{\Psi}}^{\theta}_m=\mathbb{I}_{k_2}-{\bf{B}}_m^{\theta}$ is non-singular. Suppose that the stochastic processes $\{\xi^{\theta}_{m,t}\}_{t\geq 0}$, 
$\{\delta^{\theta}_{m,t}\}_{t\geq 0}$, $\{\varepsilon^{\theta}_{m,t}\}_{t\geq 0}$ and $\{\zeta^{\theta}_{m,t}\}_{t\geq 0}$ are defined as the following stochastic differential equations:
\begin{align*}
    d\xi_{m,t}^{\theta}&=a_{1}(\xi^{\theta}_{m,t-})dt+{\bf{S}}^{\theta}_{1,m}d W_{1,t}+\int_{E_1}c_1(\xi^{\theta}_{m,t-},z_1)p_1(dt,dz_1),\quad \xi^{\theta}_{m,0}=x_{1,0},\\
    d\delta_{m,t}^{\theta}&=a_{2}(\delta^{\theta}_{m,t-})dt+{\bf{S}}^{\theta}_{2,m}d W_{2,t}+\int_{E_2}c_2(\delta^{\theta}_{m,t-},z_2)p_2(dt,dz_2),\quad \delta^{\theta}_{m,0}=x_{2,0},\\
    d\varepsilon_{m,t}^{\theta}&=a_{3}(\varepsilon^{\theta}_{m,t-})dt+{\bf{S}}^{\theta}_{3,m}d W_{3,t}+\int_{E_3}c_3(\varepsilon^{\theta}_{m,t-},z_3)p_3(dt,dz_3),\quad \varepsilon^{\theta}_{m,0}=x_{3,0},\\
    d\zeta^{\theta}_{m,t}&=a_{4}(\zeta^{\theta}_{m,t-})dt+{\bf{S}}^{\theta}_{4,m}d W_{4,t}+\int_{E_4}c_4(\zeta^{\theta}_{m,t-},z_4)p_4(dt,dz_4),\quad \zeta^{\theta}_{m,0}=x_{4,0},
\end{align*}
where ${\bf{S}}^{\theta}_{1,m}\in\mathbb{R}^{k_1\times r_1}$, ${\bf{S}}^{\theta}_{2,m}\in\mathbb{R}^{p_1\times r_2}$, ${\bf{S}}^{\theta}_{3,m}\in\mathbb{R}^{p_2\times r_3}$ and ${\bf{S}}^{\theta}_{4,m}\in\mathbb{R}^{k_2\times r_4}$ are constant coefficient matrices.
The volatility matrices of $\{\xi^{\theta}_{m,t}\}_{t\geq 0}$, $\{\delta^{\theta}_{m,t}\}_{t\geq 0}$, $\{\varepsilon^{\theta}_{m,t}\}_{t\geq 0}$ and $\{\zeta^{\theta}_{m,t}\}_{t\geq 0}$ are defined by ${\bf{\Sigma}}^{\theta}_{\xi\xi,m}={\bf{S}}^{\theta}_{1,m}{\bf{S}}_{1,m}^{\theta\top}$, 
${\bf{\Sigma}}^{\theta}_{\delta\delta,m}={\bf{S}}^{\theta}_{2,m}{\bf{S}}_{2,m}^{\theta\top}$, ${\bf{\Sigma}}^{\theta}_{\varepsilon\varepsilon,m}={\bf{S}}^{\theta}_{3,m}{\bf{S}}_{3,m}^{\theta\top}$ and 
${\bf{\Sigma}}^{\theta}_{\zeta\zeta,m}={\bf{S}}^{\theta}_{4,m}{\bf{S}}_{4,m}^{\theta\top}$, respectively. Set $X_{m,t}^{\theta}=(X_{1,m,t}^{\theta\top},X_{2,m,t}^{\theta\top})^{\top}$. Let
\begin{align*}
    {\bf{\Sigma}}_m(\theta_m)=\begin{pmatrix}
    {\bf{\Sigma}}_m^{11}(\theta_m) & {\bf{\Sigma}}_m^{12}(\theta_m)\\
    {\bf{\Sigma}}_m^{12}(\theta_m)^{\top} & {\bf{\Sigma}}_m^{22}(\theta_m)
    \end{pmatrix},
\end{align*}
where 
\begin{align*}                      
    \qquad\qquad{\bf{\Sigma}}^{11}_m(\theta_m)&
    ={\bf{\Lambda}}^{\theta}_{1,m}{\bf{\Sigma}}^{\theta}_{\xi\xi,m}{\bf{\Lambda}}_{1,m}^{\theta\top}
    +{\bf{\Sigma}}^{\theta}_{\delta\delta,m},\\  {\bf{\Sigma}}^{12}_m(\theta_m)&={\bf{\Lambda}}^{\theta}_{1,m}{\bf{\Sigma}}^{\theta}_{\xi\xi,m}{\bf{\Gamma}}_m^{\theta\top}{\bf{\Psi}}_m^{\theta-1\top}{\bf{\Lambda}}_{2,m}^{\theta\top},\\
    {\bf{\Sigma}}^{22}_m(\theta_m)&={\bf{\Lambda}}^{\theta}_{2,m}{\bf{\Psi}}_m^{\theta-1}({\bf{\Gamma}}_m^{\theta}{\bf{\Sigma}}^{\theta}_{\xi\xi,m}{\bf{\Gamma}}_m^{\theta\top}+{\bf{\Sigma}}^{\theta}_{\zeta\zeta,m}){\bf{\Psi}}_m^{\theta-1\top}{\bf{\Lambda}}_{2,m}^{\theta\top}+{\bf{\Sigma}}^{\theta}_{\varepsilon\varepsilon,m}.
\end{align*}
It is supposed that there exists $\theta_{m,0}\in \Theta_m$ such that ${\bf{\Sigma}}_0={\bf{\Sigma}}_m(\theta_{m,0})$. In addition, we assume that ${\bf{\Sigma}}_m(\theta_m)$ for any $\theta_m\in \bar{\Theta}_m$ is positive definite.

SEM is a statistical method for analyzing relationships among latent variables, and 
it has been widely applied in various fields including psychology, biology, and economics.
SEM has been extensively studied in the context of independent and identically distributed (i.i.d.) models, and extensions to time-series data have also been explored. For example, Czir{\'a}ky \cite{Cziraky(2004)} studied SEM for ARMA models, and Kusano and Uchida \cite{Kusano(JJSD)} studied SEM for diffusion processes based on high-frequency data. It should be noted that high-frequency data are commonly observed in finance, and that statistical analyses based on continuous-time stochastic process models are widely used. For further details on SEM, see, e.g., Everitt \cite{Everitt(1984)}, Mueller \cite{Mueller(1999)} and Huang et al. \cite{Huang(2017)}.

In SEM, statisticians must specify the model in advance based on the theoretical framework of the relevant research field. Although, in practice, they often have several candidate models, they must select the most appropriate one. To address this problem, 
various information criteria for SEM have been extensively studied. In the context of SEM, the Akaike information criterion (AIC) and the Bayesian information criterion (BIC) are widely used for model selection; see, e.g., Huang \cite{Huang AIC(2017)}.

In recent years, Akaike-type information criteria for continuous-time stochastic processes based on high-frequency data have been developed. Since the transition density of a diffusion process is generally unavailable in explicit form, it is difficult to construct AIC 
for discretely observed diffusion processes. Using a contrast function based on a locally Gaussian approximation of the transition density, Uchida \cite{Uchida(2010)} proposed a contrast-based information criterion, and established the asymptotic properties of the difference between the contrast-based information criteria. Uehara \cite{Uehara(2025)} studied an Akaike-type information criterion for ergodic jump-diffusion processes based on a threshold-based quasi-likelihood. Eguchi and Masuda \cite{Eguchi(2024)} proposed a quasi-AIC for semi-parametric L$\acute{e}$vy driven stochastic differential equations. Kusano and Uchida \cite{Kusano(AIC)} studied a quasi-AIC of SEM for diffusion processes. For further developments of information criteria for
continuous-time stochastic processes, see also  Fujii and Uchida \cite{Fuji(2014)}, Eguchi and Masuda \cite{Eguchi(2018)}, Eguchi and Uehara \cite{Eguchi(2021)}, Kusano and Uchida \cite{Kusano(BIC)} and the references therein. 

Since discontinuous sample paths are frequently observed in practice, statistical inference for jump-diffusion processes has also been studied extensively, and jump-diffusion models have been employed in various fields, including financial econometrics, physics, and hydrology; see, e.g., Shimizu and Yoshida \cite{Shimizu-Yoshida_JP}, Mancini \cite{Mancini(2004)}, Shimizu and Yoshida \cite{Shimizu(2006)}, Ogihara and Yoshida \cite{Ogihara(2011)}, Inatsugu and Yoshida \cite{Inatsugu(2021)} and Amorino and Gloter \cite{Amorino(2021)}. Recently, Kusano and Uchida \cite{Kusano(jump)} studied SEM for jump-diffusion processes, enabling the analysis of relationships among latent processes based on high-frequency data with jumps. However, to the best of our knowledge, model selection of SEM for jump-diffusion processes has not yet been investigated. As we mentioned above, information criteria play an important role in SEM, and jump-diffusion models provide powerful tools for capturing a wide range of stochastic phenomena. Hence, in this paper, we propose an Akaike-type information criterion of SEM for jump-diffusion processes and establish its asymptotic properties.

The rest of this paper is organized as follows. In Section \ref{notation}, we provide notation and assumptions. In Section \ref{main}, we propose an Akaike-type information criterion of SEM for jump-diffusion processes and provide its theoretical justification. We also consider the case in which the set of candidate models includes some (but not all) misspecified parametric models, and establish the asymptotic properties.
In Section \ref{simulation}, an example and simulation studies are presented. Section \ref{proofs} is devoted to the proofs of the results stated in Section \ref{main}.
\section{Notation and Assumptions}\label{notation}
First, we introduce the notation used throughout this paper. For a vector $v$, $v^{(i)}$ denotes its $i$-th element, $\Diag v$ represents the diagonal matrix whose 
$i$-th diagonal entry is $v^{(i)}$, and we define $|v|=\sqrt{\tr(vv^\top)}$. For a matrix $M$, $M_{ij}$ denotes the $(i,j)$-th entry of $M$, and we set $|M|=\sqrt{\tr(MM^\top)}$. $\otimes$ denotes the Kronecker product. For a symmetric matrix $M$, $\vec M$ and $\vech M$ indicate the vectorization and half-vectorization of $M$, respectively.  For a $p$-dimensional symmetric matrix $M$, the matrix $\mathbb{D}_p$ is the $p^2\times \bar{p}$ duplication matrix satisfying $\vec{M}=\mathbb{D}_{p}\vech{M}$, where $\bar{p}=p(p+1)/2$. Its Moore-Penrose inverse is denoted by $\mathbb{D}_p^{+}$. We write $\mathcal{M}_{p}^{+}$ for the set of all $p\times p$ real positive definite matrices. Let $ C^{k}_{\uparrow}(\mathbb R^{d})$ denote the space of all functions $f$ satisfying the following conditions:
\begin{itemize}
    \item[(i)] $f$ is continuously differentiable in $x\in \mathbb{R}^d$ up to order $k$. 
    \item[(ii)] $f$ and all its derivatives are of polynomial growth in $x\in \mathbb{R}^d$.
\end{itemize}
The symbols $\stackrel{p}{\longrightarrow}$ and $\stackrel{d}{\longrightarrow}$ indicate
convergence in probability and in distribution, respectively. A $d$-dimensional normal random variable with mean $\mu\in\mathbb{R}^d$ and covariance matrix $\Sigma\in\mathcal{M}_d^{+}$ is written as $N_{d}(\mu,\Sigma)$. A  $d$-dimensional standard normal random variable is denoted by $Z_d$, and $\mathbb{E}$ is the expectation under the distribution of $Z_d$. For a stochastic process $\{S_t\}_{t\geq 0}$,  we define $\Delta_i^n S=S_{t_i^n}-S_{t_{i-1}^n}$. Let $(d_1,d_2,d_3,d_4)=(k_1,p_1,p_2,k_2)$. Next, we introduce the following assumptions.
\begin{enumerate}
    \vspace{2mm}
    \item[\bf{[A1]}]
    For $i=1,2,3,4$, there exist $L_i>0$ and $\zeta_i(z_i)>0$ of at most polynomial growth in $z_i$ such that
    \begin{align*}
        |a_i(x_i)-a_i(y_i)|\leq L_i|x_i-y_i|,\ |c_i(x_i,z_i)-c_i(y_i,z_i)|\leq \zeta_i(z_i)|x_i-y_i|
    \end{align*}
    and
    \begin{align*}
        |c_i(x_i,z_i)|\leq \zeta_i(z_i)(1+|x_i|)
    \end{align*}
    for any $x_i,y_i\in \mathbb{R}^{d_i}$ and $z_i\in E_i$.
    \vspace{2mm}
    \item[\bf{[A2]}]
     For $i=1,2,3,4$, $a_i\in C_{\uparrow}^4(\mathbb{R}^{d_i})$.
     \vspace{2mm}
    \item[\bf{[A3]}] For $i=1,2,3,4$, there exist $r_i>0$ and $K_i>0$ such that
    \begin{align*}
        f_i(z_i){\bf{1}}_{\{|z_i|\leq r_i\}}\leq K_i|z_i|^{1-d_i}
    \end{align*}
    and
    \begin{align*}
        \int |z_i|^{L}f_i(z_i)dz_i<\infty
    \end{align*}
    for any $L\geq 1$.
    \vspace{2mm}
    \item[\bf{[A4]}]
    For $i=1,2,3,4$, it follows that
    \begin{align*}
        \inf_{x_i}|c_i(x_i,z_i)|\geq c_{i,0}|z_i|
    \end{align*}
    for some $c_{i,0}>0$ near the origin.
\end{enumerate}
\begin{remark}
{\bf{[A1]}}-{\bf{[A4]}} are the standard assumptions for jump-diffusion processes; see Kusano and Uchida \cite{Kusano(jump)}
for further details.
\end{remark}
\section{Main results}\label{main}
In this paper, we determine the presence of jumps in the interval $(t_{i-1}^n,t_i^n]$
based on the increment $|\Delta_i^n X|$; see, e.g., Shimizu and Yoshida \cite{Shimizu(2006)} and Ogihara and Yoshida \cite{Ogihara(2011)}. That is, we consider that a jump occurs in the interval when $|\Delta_i^n X|>Dh_n^{\rho}$ and no jump occurs when $|\Delta_i^n X|\leq Dh_n^{\rho}$, where $\rho\in[0,1/2)$ and $D>0$. We define the quasi-likelihood as
\begin{align*}
    {\bf{L}}_{m,n}(\mathbb{X}_n,\theta_m)=\exp\bigl\{{\bf{H}}_{m,n}(\mathbb{X}_n,\theta_m)\bigr\},
\end{align*}
where 
\begin{align*}
    {\bf{H}}_{m,n}(\mathbb{X}_n,\theta_m)&={\bf{H}}_{m,n}(\theta_m)\\
    &=-\frac{1}{2h_n}\sum_{i=1}^n(\Delta_{i}^n X)^{\top}{\bf{\Sigma}}_m(\theta_m)^{-1}(\Delta_{i}^n X){\bf{1}}_{\{|\Delta_{i}^n X|\leq Dh_n^{\rho}\}}\\
    &\qquad\qquad\qquad\qquad\qquad-\frac{1}{2}\sum_{i=1}^n\log\det {\bf{\Sigma}}_m(\theta_m){\bf{1}}_{\{|\Delta_i^n X|\leq Dh_n^{\rho}\}}.
\end{align*}
For details on the quasi-likelihood, see Kusano and Uchida \cite{Kusano(jump)}. Moreover, we assume that $\rho\in[3/8,1/2)$. The quasi-likelihood estimator $\hat{\theta}_{m,n}$ is defined by
\begin{align*}
    {\bf{H}}_{m,n}(\hat{\theta}_{m,n})=\sup_{\theta_m\in{\bar{\Theta}}_m}{\bf{H}}_{m,n}(\theta_m).
\end{align*}
Note that ${\bf{H}}_{m,n}(\theta_m)$ is three-times continuously differentiable with respect to $\theta_m$, and that ${\bf{H}}_{m,n}(\theta_m)$, $\partial_{\theta_m}{\bf{H}}_{m,n}(\theta_m)$, $\partial^2_{\theta_m}{\bf{H}}_{m,n}(\theta_m)$, and $\partial^3_{\theta_m}{\bf{H}}_{m,n}(\theta_m)$ can be continuously extended to the boundary $\partial\Theta$. 
The aim of an Akaike-type information criterion is to select the model that maximizes
\begin{align}
    {\bf{E}}_{\mathbb{X}_n}\Bigl[{\bf{E}}_{\mathbb{Z}_n}\Bigl[ {\bf{H}}_{m,n}\bigl(\mathbb{Z}_n,\hat{\theta}_{m,n}({\mathbb{X}_{n}})\bigr)\Bigr]\Bigr], \label{EZ}
\end{align}
where $\mathbb{Z}_n$ is an i.i.d. copy of $\mathbb{X}_n$, and ${\bf{E}}_{\mathbb{X}_n}$ and
${\bf{E}}_{\mathbb{Z}_n}$ denote expectations under the law of $\mathbb{X}_n$ and $\mathbb{Z}_n$, respectively. To construct an asymptotically unbiased estimator of (\ref{EZ}), it is necessary to establish the moment convergence of the quasi-maximum likelihood estimator. Set
\begin{align*}
    {\bf{Z}}_{m,n}(u_m;\theta_{m,0})=\exp\Biggl\{{\bf{H}}_{m,n}\biggl(\theta_{m,0}+\frac{1}{\sqrt{n}}u_m\biggr)-{\bf{H}}_{m,n}(\theta_{m,0})\Biggr\}
\end{align*}
for $u_m\in\mathbb{U}_{m,n}$, where 
\begin{align*}
    {\mathbb{U}}_{m,n}=\left\{u_m\in\mathbb{R}^{q_m}:\  \theta_{m,0}+\frac{1}{\sqrt{n}}u_m\in\Theta_m\right\}.
\end{align*}
For $r>0$, we define
\begin{align*}
    {\bf{V}}_{m,n}(r)=\Bigl\{u_m\in{\mathbb{U}}_{m,n}: r\leq |u_m|\Bigr\}
\end{align*}
and
\begin{align*}
    {\bf{Y}}_m(\theta_m)=-\frac{1}{2}\tr\Bigl({\bf{\Sigma}}_m(\theta_m)^{-1}{\bf{\Sigma}}_m(\theta_{m,0})-\mathbb{I}_{p}\Bigr)-\frac{1}{2}
    \log\frac{\det{\bf{\Sigma}}_m(\theta_m)}{\det{\bf{\Sigma}}_m(\theta_{m,0})}.
\end{align*}
Let
\begin{align*}
    {\bf{I}}_m(\theta_{m,0})=\Delta_{m,0}^{\top}{\bf{W}}_m(\theta_{m,0})^{-1}\Delta_{m,0},\ \ 
    {\bf{W}}_m(\theta_{m,0})=2\mathbb{D}^{+}_{p}\bigl({\bf{\Sigma}}_m(\theta_{m,0})
    \otimes{\bf{\Sigma}}_m(\theta_{m,0})\bigr)\mathbb{D}^{+\top}_{p},
\end{align*}
where
\begin{align*}
    \Delta_{m,0}=\left.\frac{\partial}{\partial\theta_m^{\top}}\vech {\bf{\Sigma}}_m(\theta_m)\right|_{\theta_m=\theta_{m,0}}.
\end{align*}
Moreover, we make the following assumption.
\begin{enumerate}
    \item[\bf{[B1]}] There exists a constant $\chi>0$ such that
        \begin{align*}
         {\bf{Y}}_m(\theta_m)\leq -\chi\bigl|\theta_m-\theta_{m,0}\bigr|^2
        \end{align*}
        for all $\theta_m\in\Theta_m$.
\end{enumerate}
\begin{remark}
If the usual identifiability condition 
\begin{align*}
    {\bf{\Sigma}}_m(\theta_{m})={\bf{\Sigma}}_m(\theta_{m,0})\Longrightarrow\theta_m=\theta_{m,0}
\end{align*}
and 
\begin{align*}
    \rank\Delta_{m,0}=q_m
\end{align*}
are satisfied, then {\bf{[B1]}} holds; see Kusano and Uchida \cite{Kusano(AIC)} for further details.
\end{remark}
The following theorem holds for the quasi-likelihood random field ${\bf{Z}}_{m,n}$.
\begin{theorem}\label{Zine}
Suppose that {\bf{[A1]}}-{\bf{[A4]}} and {\bf{[B1]}}. Then, for all $L>0$, there exists $C_L>0$ such that
\begin{align*}
    {\bf{P}}\left(\sup_{u_m\in {\bf{V}}_{m,n}(r)}{\bf{Z}}_{m,n}(u_m;\theta_{m,0})\geq e^{-r}\right)\leq\frac{C_L}{r^L}
\end{align*}
for all $r>0$ and $n\in\mathbb{N}$.
\end{theorem}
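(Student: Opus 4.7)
The plan is to invoke the polynomial-type large deviation (PLD) framework of Yoshida, as developed for quasi-likelihoods in continuous-time models, following the strategy used in Kusano-Uchida \cite{Kusano(AIC)} for the pure-diffusion SEM case and \cite{Kusano(jump)} for SEM with jumps. Writing
\begin{align*}
\log {\bf{Z}}_{m,n}(u_m;\theta_{m,0}) = {\bf{H}}_{m,n}\bigl(\theta_{m,0}+n^{-1/2}u_m\bigr) - {\bf{H}}_{m,n}(\theta_{m,0}),
\end{align*}
I would Taylor-expand to order three around $\theta_{m,0}$, yielding a linear score term, a quadratic information term, and a cubic remainder. Since $T=nh_n$ is fixed, the natural normalisation is $n$: by standard high-frequency arguments, $(nh_n)^{-1}\sum_{i=1}^n(\Delta_i^n X)(\Delta_i^n X)^\top{\bf{1}}_{\{|\Delta_i^n X|\leq Dh_n^\rho\}}$ converges in probability to ${\bf{\Sigma}}_0$, so that $n^{-1}{\bf{H}}_{m,n}(\theta_m)$ converges to $-\tfrac12\tr\bigl({\bf{\Sigma}}_m(\theta_m)^{-1}{\bf{\Sigma}}_0\bigr)-\tfrac12\log\det{\bf{\Sigma}}_m(\theta_m)$, which reproduces ${\bf{Y}}_m(\theta_m)$ after subtracting the value at $\theta_{m,0}$.

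The first technical step is to upgrade this convergence to uniform moment bounds of the form
\begin{align*}
\sup_{\theta_m\in\bar{\Theta}_m}{\bf{E}}\!\left[\left|\frac{1}{n}\bigl\{{\bf{H}}_{m,n}(\theta_m)-{\bf{H}}_{m,n}(\theta_{m,0})\bigr\}-{\bf{Y}}_m(\theta_m)\right|^L\right]\leq C_L\, n^{-\alpha L}
\end{align*}
for some $\alpha>0$ and every $L\geq 1$, together with analogous $L$-moment bounds on the normalised score $n^{-1/2}\partial_{\theta_m}{\bf{H}}_{m,n}(\theta_{m,0})$, the observed information $-n^{-1}\partial^2_{\theta_m}{\bf{H}}_{m,n}(\theta_{m,0})$, and $\sup_{\theta_m}|n^{-1}\partial^3_{\theta_m}{\bf{H}}_{m,n}(\theta_m)|$. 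These rely on the Shimizu-Yoshida jump-truncation technique \cite{Shimizu(2006)}: the admissible range $\rho\in[3/8,1/2)$ guarantees that, with probability $1-O(h_n^L)$ for every $L$, the indicator ${\bf{1}}_{\{|\Delta_i^n X|\leq Dh_n^\rho\}}$ correctly classifies each subinterval as containing a jump or not, so that the truncated quasi-likelihood behaves asymptotically like the pure-diffusion quasi-likelihood. A Sobolev embedding on the bounded parameter domain $\Theta_m\subset\mathbb{R}^{q_m}$ then upgrades pointwise moment bounds to uniform ones, exactly as in the PLD arguments of \cite{Kusano(AIC), Kusano(jump)}.

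With these ingredients, the final assembly combines the Taylor expansion with assumption {\bf{[B1]}}. On ${\bf{V}}_{m,n}(r)$ one has $n{\bf{Y}}_m(\theta_{m,0}+n^{-1/2}u_m)\leq -\chi|u_m|^2\leq -\chi r^2$, which supplies the dominant deterministic negative drift of order $r^2$. The discrepancy $\log{\bf{Z}}_{m,n}(u_m;\theta_{m,0})-n{\bf{Y}}_m(\theta_{m,0}+n^{-1/2}u_m)$, including the score contribution $n^{-1/2}u_m^\top\partial_{\theta_m}{\bf{H}}_{m,n}(\theta_{m,0})$ and the third-order remainder, is controlled in $L^L$ uniformly over ${\bf{V}}_{m,n}(r)$ by the bounds above, and a Markov-type inequality with a sufficiently large moment order yields the polynomial tail estimate $C_L/r^L$.

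The main obstacle is the combinatorial interplay between the discontinuous thresholding indicator ${\bf{1}}_{\{|\Delta_i^n X|\leq Dh_n^\rho\}}$ and the $\theta_m$-derivatives of ${\bf{\Sigma}}_m(\theta_m)^{-1}$ and $\log\det{\bf{\Sigma}}_m(\theta_m)$. Although the indicator is parameter-free, moment estimates for products of its complement with high powers of the continuous-part increments require balancing the Poisson jump rate $O(h_n)$ per interval against a Gaussian tail of order $\exp(-c h_n^{2\rho-1})$ for the continuous part; the restriction $\rho\in[3/8,1/2)$ is precisely what makes this balance succeed for moments of every order uniformly in $\theta_m\in\bar{\Theta}_m$, and this is where the bulk of the technical work lies.
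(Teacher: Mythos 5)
Your proposal follows essentially the same route as the paper: the paper proves this theorem by verifying the conditions of Theorem 3(c) in Yoshida (2011) (the polynomial-type large deviation inequality), and that verification rests on exactly the four moment bounds you list --- on $n^{-1/2}\partial_{\theta_m}{\bf{H}}_{m,n}(\theta_{m,0})$, on $n^{\varepsilon}\bigl|n^{-1}\partial^2_{\theta_m}{\bf{H}}_{m,n}(\theta_{m,0})+{\bf{I}}_m(\theta_{m,0})\bigr|$, on $\sup_{\theta_m}n^{-1}\bigl|\partial^3_{\theta_m}{\bf{H}}_{m,n}(\theta_m)\bigr|$, and on $n^{\varepsilon}\sup_{\theta_m}\bigl|{\bf{Y}}_{m,n}(\theta_m)-{\bf{Y}}_m(\theta_m)\bigr|$ --- obtained via the Shimizu--Yoshida truncation estimates with $\rho\in[3/8,1/2)$ and combined with {\bf{[B1]}}. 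The only cosmetic difference is that the paper delegates the final Taylor-expansion/Markov assembly to Yoshida's theorem rather than carrying it out explicitly.
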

\noindent
Theorem \ref{Zine} together with Theorem 2 in Kusano and Uchida \cite{Kusano(jump)} implies the following result.
\begin{proposition}\label{moment}
Suppose that {\bf{[A1]}}-{\bf{[A4]}} and {\bf{[B1]}}. Then, for all $L>0$,
\begin{align*}
    \sup_{n\in\mathbb{N}}{\bf{E}}_{\mathbb{X}_n}\Biggl[\Bigl|\sqrt{n}(\hat{\theta}_{m,n}-\theta_{m,0})\Bigr|^L\Biggr]<\infty
\end{align*}
and for $f_m\in C_{\uparrow}(\mathbb{R}^{q_m})$,
\begin{align*}
   {\bf{E}}_{\mathbb{X}_n}\biggl[f_m\Bigl(\sqrt{n}(\hat{\theta}_{m,n}-\theta_{m,0})\Bigr)\biggr]\longrightarrow\mathbb{E}\biggl[f_m\Bigl({\bf{I}}_m(\theta_{m,0})^{-1/2}Z_{q_m}\Bigr)\biggr]
\end{align*}
as $n\longrightarrow\infty$.
\end{proposition}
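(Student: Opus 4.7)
The plan is to follow the Ibragimov--Has'minskii/Yoshida polynomial-type-large-deviation scheme: I will first convert Theorem \ref{Zine} into a uniform polynomial tail bound on $\hat{u}_{m,n}:=\sqrt{n}(\hat{\theta}_{m,n}-\theta_{m,0})$, and then combine this with the asymptotic normality $\hat{u}_{m,n}\stackrel{d}{\longrightarrow}{\bf{I}}_m(\theta_{m,0})^{-1/2}Z_{q_m}$ supplied by Theorem 2 of Kusano and Uchida \cite{Kusano(jump)}. The first assertion will follow by integrating the tail bound in $r$; the second by combining weak convergence with uniform integrability.

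The key deterministic step is the passage from $\sup_{{\bf{V}}_{m,n}(r)}{\bf{Z}}_{m,n}$ to the tail of $\hat{u}_{m,n}$. Since $\hat{\theta}_{m,n}$ maximizes ${\bf{H}}_{m,n}$ over $\bar{\Theta}_m$, the rescaled maximizer $\hat{u}_{m,n}$ satisfies ${\bf{Z}}_{m,n}(\hat{u}_{m,n};\theta_{m,0})\geq{\bf{Z}}_{m,n}(0;\theta_{m,0})=1$. Hence, on $\{|\hat{u}_{m,n}|\geq r\}$ we have $\hat{u}_{m,n}\in{\bf{V}}_{m,n}(r)$ and therefore
\begin{align*}
\sup_{u_m\in{\bf{V}}_{m,n}(r)}{\bf{Z}}_{m,n}(u_m;\theta_{m,0})\geq 1\geq e^{-r}.
\end{align*}
Applying Theorem \ref{Zine} with exponent $L'$ yields a constant $C_{L'}>0$ such that
\begin{align*}
{\bf{P}}\bigl(|\hat{u}_{m,n}|\geq r\bigr)\leq C_{L'}r^{-L'}\qquad(r>0,\ n\in\mathbb{N}).
\end{align*}

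The two conclusions then follow routinely. For the moment bound, the layer-cake formula with $L'=L+2$ gives
\begin{align*}
{\bf{E}}_{\mathbb{X}_n}\bigl[|\hat{u}_{m,n}|^L\bigr]=L\int_0^\infty r^{L-1}{\bf{P}}(|\hat{u}_{m,n}|\geq r)\,dr\leq 1+LC_{L+2}\int_1^\infty r^{-3}\,dr,
\end{align*}
which is finite and independent of $n$. For the convergence of expectations, any $f_m\in C_\uparrow(\mathbb{R}^{q_m})$ satisfies $|f_m(u)|\leq K(1+|u|^\kappa)$ for some $K,\kappa>0$; applying the moment bound with $L=2\kappa$ yields $\sup_n{\bf{E}}_{\mathbb{X}_n}\bigl[|f_m(\hat{u}_{m,n})|^2\bigr]<\infty$, so $\{f_m(\hat{u}_{m,n})\}_{n\geq 1}$ is uniformly integrable. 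Combined with the weak convergence of $\hat{u}_{m,n}$ and the continuity of $f_m$, Vitali's theorem delivers the stated limit. I do not anticipate a genuine obstacle within this skeleton---the substantive analytic work is already encapsulated in Theorem \ref{Zine} and Theorem 2 of \cite{Kusano(jump)}---beyond the bookkeeping needed to ensure uniformity in $n$ and to justify that the supremum on the closed set $\bar{\Theta}_m$ is attained at a measurable $\hat{\theta}_{m,n}$ to which the argument above applies.
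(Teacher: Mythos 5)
Your proposal is correct and follows essentially the same route the paper takes: the paper simply notes that Theorem \ref{Zine} (the polynomial large deviation inequality) combined with the asymptotic normality from Theorem 2 of Kusano and Uchida \cite{Kusano(jump)} yields the result, and your argument is the standard fleshed-out version of exactly that implication (tail bound from the likelihood-ratio field, layer-cake integration for the moments, uniform integrability plus weak convergence for the limit of expectations). No gap to report.
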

\noindent
Using Proposition \ref{moment}, we have the following theorem.
\begin{theorem}\label{QAICtheorem}
Suppose that {\bf{[A1]}}-{\bf{[A4]}} and {\bf{[B1]}}. Then, as $n\longrightarrow\infty$,
\begin{align*}
    {\bf{E}}_{\mathbb{X}_n}\Bigl[{\bf{H}}_{m,n}\bigl(\mathbb{X}_{n},\hat{\theta}_{m,n}({\mathbb{X}_{n}})\bigr)\Bigr]-{\bf{E}}_{\mathbb{X}_n}\Bigl[{\bf{E}}_{\mathbb{Z}_n}\Bigl[{\bf{H}}_{m,n}\bigl(\mathbb{Z}_n,\hat{\theta}_{m,n}({\mathbb{X}_{n}})\bigr)\Bigr]\Bigr]=q_m+o(1).
\end{align*}
\end{theorem}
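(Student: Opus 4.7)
The plan is to decompose the quantity of interest into two ``bias'' terms, each measured relative to the value at the true parameter, and to compute each via a second-order Taylor expansion of $\mathbf{H}_{m,n}$ in $\theta$ combined with the moment convergence of $u_n := \sqrt{n}(\hat{\theta}_{m,n}-\theta_{m,0})$ provided by Proposition \ref{moment}. The starting observation is that $\mathbf{E}_{\mathbb{X}_n}[\mathbf{H}_{m,n}(\mathbb{X}_n,\theta_{m,0})] = \mathbf{E}_{\mathbb{Z}_n}[\mathbf{H}_{m,n}(\mathbb{Z}_n,\theta_{m,0})]$, since $\mathbb{X}_n$ and $\mathbb{Z}_n$ are identically distributed and $\theta_{m,0}$ is deterministic. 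Writing the target as $\mathrm{Bias}_1 - \mathrm{Bias}_2$ with
\begin{align*}
\mathrm{Bias}_1 &= \mathbf{E}_{\mathbb{X}_n}\bigl[\mathbf{H}_{m,n}(\mathbb{X}_n,\hat{\theta}_{m,n}) - \mathbf{H}_{m,n}(\mathbb{X}_n,\theta_{m,0})\bigr],\\
\mathrm{Bias}_2 &= \mathbf{E}_{\mathbb{X}_n}\Bigl[\mathbf{E}_{\mathbb{Z}_n}\bigl[\mathbf{H}_{m,n}(\mathbb{Z}_n,\hat{\theta}_{m,n}) - \mathbf{H}_{m,n}(\mathbb{Z}_n,\theta_{m,0})\bigr]\Bigr],
\end{align*}
it suffices to prove $\mathrm{Bias}_1 = q_m/2 + o(1)$ and $\mathrm{Bias}_2 = -q_m/2 + o(1)$.

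For $\mathrm{Bias}_1$, I would Taylor expand to second order and combine with a first-order expansion of the score equation $\partial_{\theta_m}\mathbf{H}_{m,n}(\hat{\theta}_{m,n}) = 0$ to eliminate the linear-in-$u_n$ term, obtaining
\begin{align*}
\mathbf{H}_{m,n}(\mathbb{X}_n,\hat{\theta}_{m,n}) - \mathbf{H}_{m,n}(\mathbb{X}_n,\theta_{m,0}) = -\frac{1}{2n}\,u_n^{\top}\partial_{\theta_m}^2\mathbf{H}_{m,n}(\theta_{m,0})\,u_n + R_n^{(1)}.
\end{align*}
Using $n^{-1}\partial_{\theta_m}^2\mathbf{H}_{m,n}(\theta_{m,0})\stackrel{p}{\longrightarrow}-\mathbf{I}_m(\theta_{m,0})$ (the Gaussian quasi-likelihood Hessian limit, available from Kusano and Uchida \cite{Kusano(jump)}) together with the $L^p$-convergence of $u_n$ to $\mathbf{I}_m(\theta_{m,0})^{-1/2}Z_{q_m}$ from Proposition \ref{moment}, the leading term converges in $L^1$ to $\tfrac12\mathbb{E}\bigl[Z_{q_m}^{\top}\mathbf{I}_m(\theta_{m,0})^{-1/2}\mathbf{I}_m(\theta_{m,0})\mathbf{I}_m(\theta_{m,0})^{-1/2}Z_{q_m}\bigr] = q_m/2$.

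For $\mathrm{Bias}_2$, set $G_n(\theta_m) := \mathbf{E}_{\mathbb{Z}_n}[\mathbf{H}_{m,n}(\mathbb{Z}_n,\theta_m)]$, which is deterministic in $\theta_m$ and therefore independent of $\hat{\theta}_{m,n}$. Taylor expanding $G_n(\hat{\theta}_{m,n}) - G_n(\theta_{m,0})$ around $\theta_{m,0}$ and then taking $\mathbf{E}_{\mathbb{X}_n}$ gives
\begin{align*}
\mathrm{Bias}_2 = \frac{1}{\sqrt{n}}\partial_{\theta_m}G_n(\theta_{m,0})^{\top}\mathbf{E}_{\mathbb{X}_n}[u_n] + \frac{1}{2n}\mathrm{tr}\bigl(\partial_{\theta_m}^2 G_n(\theta_{m,0})\,\mathbf{E}_{\mathbb{X}_n}[u_n u_n^{\top}]\bigr) + R_n^{(2)}.
\end{align*}
The first summand is $o(1)$ because $\partial_{\theta_m}G_n(\theta_{m,0})/\sqrt{n}\to 0$ (the normalized expected score vanishes at $\theta_{m,0}$) while $\mathbf{E}[u_n]$ is bounded; the second converges to $\tfrac12\mathrm{tr}\bigl(-\mathbf{I}_m(\theta_{m,0})\cdot\mathbf{I}_m(\theta_{m,0})^{-1}\bigr) = -q_m/2$, using the same Hessian limit applied to $G_n$ and $\mathbf{E}[u_n u_n^{\top}]\to\mathbf{I}_m(\theta_{m,0})^{-1}$. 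Subtracting yields $q_m + o(1)$.

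The main obstacle will be to control the Taylor remainders $R_n^{(1)}$ and $R_n^{(2)}$ in $L^1$. These involve $\partial_{\theta_m}^3 \mathbf{H}_{m,n}$ evaluated at intermediate points between $\hat{\theta}_{m,n}$ and $\theta_{m,0}$, and the presence of the jump-truncation indicator $\mathbf{1}_{\{|\Delta_i^n X|\le Dh_n^{\rho}\}}$ prevents a direct appeal to pure-diffusion estimates. The plan is to combine uniform polynomial-growth bounds on the higher derivatives of $\mathbf{H}_{m,n}$ (of the type developed in Kusano and Uchida \cite{Kusano(jump)}) with the uniform moment bounds $\sup_n \mathbf{E}[|u_n|^L] < \infty$ provided by Proposition \ref{moment}, itself underwritten by the polynomial-type large-deviation inequality of Theorem \ref{Zine}. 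H\"older's inequality then yields $\mathbf{E}|R_n^{(j)}| = O(n^{-1/2})$ for $j=1,2$, completing the proof.
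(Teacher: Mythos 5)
Your proposal follows essentially the same route as the paper's proof: the same decomposition into two bias terms with the cross term vanishing because $\mathbb{X}_n$ and $\mathbb{Z}_n$ are equidistributed, Taylor expansions combined with the score equation $\partial_{\theta_m}{\bf{H}}_{m,n}(\hat{\theta}_{m,n})=0$ to identify the leading quadratic forms, the moment convergence of $\hat{u}_n=\sqrt{n}(\hat{\theta}_{m,n}-\theta_{m,0})$ from Proposition \ref{moment} to evaluate them as $q_m/2$ and $-q_m/2$, and the uniform $L^p$ bounds on the second and third derivatives of ${\bf{H}}_{m,n}$ to kill the remainders. The one step you leave implicit is the localization to the event $B_n=\{|\hat{\theta}_{m,n}-\theta_{m,0}|\le 2^{-1}n^{-1/4}\rho'\}$, which the paper needs both to guarantee that $\hat{\theta}_{m,n}$ is an interior point (so the first-order condition holds and the Taylor expansions stay inside $\Theta_m$) and to control the contribution of $B_n^c$ via ${\bf{P}}(B_n^c)=O(n^{-4})$ together with Lemma \ref{nHlemma}; this is routine given the polynomial-type tail bound you already invoke.
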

\noindent
Theorem \ref{QAICtheorem} implies that
\begin{align*}
    {\bf{H}}_{m,n}\bigl(\mathbb{X}_{n},\hat{\theta}_{m,n}({\mathbb{X}_{n}})\bigr)-q_m
\end{align*}
is an asymptotically unbiased estimator of (\ref{EZ}). Therefore, to estimate the model that minimizes
\begin{align*}
    -2{\bf{E}}_{\mathbb{X}_n}\Bigl[{\bf{E}}_{\mathbb{Z}_n}
    \Bigl[{\bf{H}}_{m,n}\bigl(\mathbb{Z}_n,\hat{\theta}_{m,n}({\mathbb{X}_{n}})\bigr)\Bigr]\Bigr],
\end{align*}
we define the Akaike-type information criterion as 
\begin{align*}
    {\bf{QAIC}}_n(m)=-2{\bf{H}}_{m,n}\bigl(\mathbb{X}_n,\hat{\theta}_{m,n}({\mathbb{X}_{n}})\bigr)+2q_m
\end{align*}
and select the optimal model $\hat{m}_n$ by 
\begin{align}
    {\bf{QAIC}}_n(\hat{m}_n)=\min_{m=1,\ldots,M}{\bf{QAIC}}_n(m). \label{hatm}
\end{align}
Next, we consider the setting in which the set of candidate models includes some (but not all) misspecified parametric models; that is, there exists $m\in\{1,\ldots,M\}$ such that for any $\theta_m\in\Theta_m$,
\begin{align*}
    {\bf{\Sigma}}_0\neq{\bf{\Sigma}}_m(\theta_m). 
\end{align*}
The optimal parameter $\bar{\theta}_{m}$ is defined by
\begin{align*}
    {\bf{H}}_{m}(\bar{\theta}_{m})=\sup_{\theta_m\in\Theta_m}{\bf{H}}_{m}(\theta_{m}),
\end{align*}
where
\begin{align*}
    {\bf{H}}_{m}(\theta_m)
    &=-\frac{1}{2}\tr\Bigl({\bf{\Sigma}}_m(\theta_m)^{-1}{\bf{\Sigma}}_0\Bigr)-\frac{1}{2}
    \log\det{\bf{\Sigma}}_m(\theta_m).
\end{align*}
Define the set of correctly specified models as
\begin{align*}
    \mathfrak{M}=\biggl\{m\in\{1,\ldots,M\}\ \Big|\ \mbox{There exists}\ \theta_{m,0}\in\Theta_{m}\ \mbox{such that}\ {\bf{\Sigma}}_0={\bf{\Sigma}}_{m}(\theta_{m,0}).\biggr\}
\end{align*}
and the set of misspecified models as $\mathfrak{M}^{c}=\{1,\ldots,M\}\backslash \mathfrak{M}$, respectively. Furthermore, we impose the following assumption.
\begin{enumerate}
    \vspace{3mm}
    \item[{\bf{[B2]}}] ${\bf{H}}_{m}(\theta_m)={\bf{H}}_{m}(\bar{\theta}_{m})\Longrightarrow \ \theta_m=\bar{\theta}_{m}$.
    \vspace{3mm}
\end{enumerate}
The following asymptotic result holds for $\hat{m}_n$ defined in (\ref{hatm}).
\begin{proposition}\label{miss}
Suppose that {\bf{[A1]}}-{\bf{[A4]}} and {\bf{[B2]}}. Then, as $n\longrightarrow\infty$,
\begin{align*}
{\bf{P}}\Bigl(\hat{m}_n\in\mathfrak{M}^c\Bigr)
    &\longrightarrow 0.
\end{align*}
\end{proposition}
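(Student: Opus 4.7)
\textbf{Proof of Proposition \ref{miss}.}
The plan is to show $\mathbf P(\mathbf{QAIC}_n(m)>\mathbf{QAIC}_n(m^*))\to 1$ for any fixed pair $(m,m^*)$ with $m\in\mathfrak M^c$ and $m^*\in\mathfrak M$; this, combined with the finiteness of $\mathfrak M^c$ and the non-emptiness of $\mathfrak M$ (guaranteed by the ``some but not all'' misspecification hypothesis), yields the claim via a union bound. Writing
\begin{align*}
\frac{1}{n}\bigl(\mathbf{QAIC}_n(m)-\mathbf{QAIC}_n(m^*)\bigr)=\frac{2}{n}\bigl[\mathbf{H}_{m^*,n}(\hat\theta_{m^*,n})-\mathbf{H}_{m,n}(\hat\theta_{m,n})\bigr]+\frac{2(q_m-q_{m^*})}{n},
\end{align*}
the $O(1/n)$ penalty term is asymptotically negligible, so the argument reduces to producing a strictly positive probability-limit for the normalized contrast difference.

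The first key step is the uniform convergence
\begin{align*}
\sup_{\theta_m\in\bar\Theta_m}\Bigl|\tfrac{1}{n}\mathbf{H}_{m,n}(\theta_m)-\mathbf{H}_m(\theta_m)\Bigr|\stackrel{p}{\longrightarrow}0,
\end{align*}
which I would establish along the lines of the consistency part of Kusano and Uchida \cite{Kusano(jump)}: under {\bf{[A1]}}-{\bf{[A4]}} the threshold $\{|\Delta_i^n X|\leq Dh_n^\rho\}$ asymptotically eliminates the jump-carrying increments, so $n^{-1}h_n^{-1}\sum_i(\Delta_i^n X)(\Delta_i^n X)^\top\mathbf 1_{\{|\Delta_i^n X|\leq Dh_n^\rho\}}$ tends to $\Sigma_0$ in probability and $n^{-1}\sum_i\mathbf 1_{\{|\Delta_i^n X|\leq Dh_n^\rho\}}$ tends to $1$, recovering precisely the trace and log-determinant pieces of $\mathbf H_m(\theta_m)$; continuity of $\theta_m\mapsto\Sigma_m(\theta_m)^{-1}$ on the compact $\bar\Theta_m$ then upgrades pointwise to uniform convergence. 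Together with {\bf{[B2]}} (which forces $\bar\theta_m$ to be the unique maximizer of $\mathbf H_m$, and in particular $\bar\theta_{m^*}=\theta_{m^*,0}$ when $m^*\in\mathfrak M$), the standard argmax theorem yields $\hat\theta_{m,n}\stackrel{p}{\longrightarrow}\bar\theta_m$ for every candidate model, and hence
\begin{align*}
\tfrac{1}{n}\bigl[\mathbf H_{m^*,n}(\hat\theta_{m^*,n})-\mathbf H_{m,n}(\hat\theta_{m,n})\bigr]\stackrel{p}{\longrightarrow}\mathbf H_{m^*}(\theta_{m^*,0})-\mathbf H_m(\bar\theta_m).
\end{align*}

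To close the argument, $\Sigma_{m^*}(\theta_{m^*,0})=\Sigma_0$ gives $\mathbf H_{m^*}(\theta_{m^*,0})=-p/2-(1/2)\log\det\Sigma_0$, so
\begin{align*}
\mathbf H_{m^*}(\theta_{m^*,0})-\mathbf H_m(\bar\theta_m)=\tfrac{1}{2}\Bigl[\tr\bigl(\Sigma_m(\bar\theta_m)^{-1}\Sigma_0\bigr)-p-\log\det\bigl(\Sigma_0\Sigma_m(\bar\theta_m)^{-1}\bigr)\Bigr],
\end{align*}
which is the Gaussian Kullback-Leibler divergence of $\mathcal N_p(0,\Sigma_0)$ from $\mathcal N_p(0,\Sigma_m(\bar\theta_m))$ and is strictly positive, since $\Sigma_m(\bar\theta_m)\neq\Sigma_0$ by definition of $m\in\mathfrak M^c$. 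Therefore $\mathbf{QAIC}_n(m)-\mathbf{QAIC}_n(m^*)\to+\infty$ in probability, as required. The main technical obstacle I anticipate is the uniform convergence step for misspecified $m$: since neither {\bf{[B1]}} nor Proposition \ref{moment} is available in that case, the threshold-based moment bounds of Kusano and Uchida \cite{Kusano(jump)} must be reorganized to hold uniformly over $\bar\Theta_m$, using compactness of $\bar\Theta_m$ and the fact that $\Sigma_m(\theta_m)\in\mathcal M_p^{+}$ throughout $\bar\Theta_m$.
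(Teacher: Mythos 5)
Your proposal is correct and follows essentially the same route as the paper: the paper's Lemma 9 is exactly your key convergence $\tfrac{1}{n}\mathbf{H}_{m,n}(\hat{\theta}_{m,n})\stackrel{p}{\longrightarrow}\mathbf{H}_{m}(\bar{\theta}_{m})$ (obtained, as you anticipate, from the uniform law of large numbers and {\bf{[B2]}} via the argument of Kusano and Uchida \cite{Kusano(jump)}), and the paper then concludes, as you do, by the Kullback--Leibler positivity gap $\mathbf{H}_{m^*}(\theta_{m^*,0})>\mathbf{H}_{m}(\bar{\theta}_{m})$ for $m\in\mathfrak{M}^c$ together with the negligible $O(1/n)$ penalty and a union bound over the finitely many misspecified models. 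You simply spell out the details that the paper defers to Theorem 3 of Kusano and Uchida \cite{Kusano(AIC)} and Theorem 4 of Kusano and Uchida \cite{Kusano(jump)}.
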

\section{Examples and Simulation results}\label{simulation}
\subsection{True model}
The $4$ and $8$-dimensional observable processes $\{X_{1,0,t}\}_{t\geq 0}$ and $\{X_{2,0,t}\}_{t\geq 0}$ are defined by the following factor models:
\begin{align*}
    X_{1,0,t}&={\bf{\Lambda}}_{1,0}\xi_{0,t}+\delta_{0,t},\\
    X_{2,0,t}&={\bf{\Lambda}}_{2,0}\eta_{0,t}+\varepsilon_{0,t},
\end{align*}
where  $\{\xi_{0,t}\}_{t\geq 0}$, $\{\delta_{0,t}\}_{t\geq 0}$, $\{\eta_{0,t}\}_{t\geq 0}$ and $\{\varepsilon_{0,t}\}_{t\geq 0}$ are $1$, $4$, $2$ and $8$-dimensional latent processes, respectively, 
\begin{align*}
    {\bf{\Lambda}}_{1,0}=\begin{pmatrix}
    1 & 0.5 & 0.8 & 0.3
    \end{pmatrix}^{\top}
\end{align*}
and
\begin{align*}
    {\bf{\Lambda}}_{2,0}=\begin{pmatrix}
    1 & 1.3 & 0.8 & 0.5 & 0 & 0 & 0 & 0\\
    0 & 0 & 0 & 0 & 1 & 0.9 & 0.7 & 1.1
    \end{pmatrix}^{\top}.
\end{align*}
The relationship between $\{\xi_{0,t}\}_{t\geq 0}$ and  $\{\eta_{0,t}\}_{t\geq 0}$ is given by
\begin{align*}
    \eta_{0,t}={\bf{\Gamma}}_0\xi_{0,t}+\zeta_{0,t},
\end{align*}
where $\{\zeta_{0,t}\}_{t\geq 0}$ is a $2$-dimensional latent process and
\begin{align*}
    {\bf{\Gamma}}_0=\begin{pmatrix}
    -0.6 & 0.9
    \end{pmatrix}^{\top}.
\end{align*}
The stochastic process $\{\xi_{0,t}\}_{t\geq 0}$ is defined by the following stochastic differential equation:
\begin{align*}
    d\xi_{0,t}&=-4\bigl(\xi_{0,t-}-3)dt+0.8dW_{1,t}+\int_{\mathbb{R}\backslash \{0\}}zp_1(dt,dz), \quad 
    \xi_{0,0}=3,
\end{align*}
where $\{W_{1,t}\}_{t\geq 0}$ is a $1$-dimensional standard Wiener process, and $p_1(dt,dz)$ is a Poisson random measure on $\mathbb{R}^+\times \mathbb{R}\backslash\{0\}$ with the jump density $f_{1}(z)=4g(z|0,2)$. $g(z|\mu,\sigma^2)$ is the probability density function of the normal distribution with mean $\mu\in\mathbb{R}$ and variance $\sigma^2>0$, i.e.,
\begin{align*}
    g(z|\mu,\sigma^2)=\frac{1}{\sqrt{2\pi\sigma^2}}\exp{\biggl\{-\frac{(z-\mu)^2}{2\sigma^2}\biggr\}}.
\end{align*}
The stochastic process $\{\delta_{0,t}\}_{t\geq 0}$ is assumed to satisfy the following
stochastic differential equation:
\begin{align*}
    d\delta^{(1)}_{0,t}&=-2\delta^{(1)}_{0,t-}dt+0.6 dW^{(1)}_{2,t}
    +\int_{\mathbb{R}\backslash \{0\}}z p_{2,1}(dt,dz), \quad 
    \delta^{(1)}_{0,0}=0,\\
    d\delta^{(2)}_{0,t}&=-5\delta^{(2)}_{0,t-}dt+1.2dW^{(2)}_{2,t}
    +\int_{\mathbb{R}\backslash \{0\}}zp_{2,2}(dt,dz), \quad 
    \delta^{(2)}_{0,0}=0,\\
    d\delta^{(3)}_{0,t}&=-3\delta^{(3)}_{0,t-}dt+0.8dW^{(3)}_{2,t}
    +\int_{\mathbb{R}\backslash \{0\}}zp_{2,3}(dt,dz), \quad 
    \delta^{(3)}_{0,0}=0
\end{align*}
and
\begin{align*}
    d\delta^{(4)}_{0,t}&=-4\delta^{(4)}_{0,t-}dt+0.7dW^{(4)}_{2,t}
    +\int_{\mathbb{R}\backslash \{0\}}zp_{2,4}(dt,dz), \quad 
    \delta^{(4)}_{0,0}=0,
\end{align*}
where $\{W_{2,t}\}_{t\geq 0}$ is a $4$-dimensional standard Wiener process, and $p_{2,i}(dt,dz)$ for $i=1,2,3,4$ are Poisson random measures on $\mathbb{R}^+\times \mathbb{R}\backslash\{0\}$ with the jump densities $f_{2,1}(z)=g(z|0,4)$, $f_{2,2}(z)=g(z|0,3)$, $f_{2,3}(z)=g(z|0,2)$ and $f_{2,4}(z)=g(z|0,3)$, respectively. The stochastic process $\{\varepsilon_{0,t}\}_{t\geq 0}$ is modeled by
the following stochastic differential equations:
\begin{align*}
    d\varepsilon^{(1)}_{0,t}&=-3\varepsilon^{(1)}_{0,t-}dt+1.3dW^{(1)}_{3,t}
    +\int_{\mathbb{R}\backslash \{0\}}z p_{3,1}(dt,dz),\quad 
    \varepsilon^{(1)}_{0,0}=0,\\
    d\varepsilon^{(2)}_{0,t}&=-2\varepsilon^{(2)}_{0,t-}dt+0.5dW^{(2)}_{3,t}
    +\int_{\mathbb{R}\backslash \{0\}}zp_{3,2}(dt,dz),\quad 
    \varepsilon^{(2)}_{0,0}=0,\\
    d\varepsilon^{(3)}_{0,t}&=-4\varepsilon^{(3)}_{0,t-}dt+0.7dW^{(3)}_{3,t}
    +\int_{\mathbb{R}\backslash \{0\}}zp_{3,3}(dt,dz),\quad 
    \varepsilon^{(3)}_{0,0}=0,\\
    d\varepsilon^{(4)}_{0,t}&=-2\varepsilon^{(4)}_{0,t-}dt+0.6
    dW^{(4)}_{3,t}+\int_{\mathbb{R}\backslash \{0\}}zp_{3,4}(dt,dz),\quad 
    \varepsilon^{(4)}_{0,0}=0,\\
    d\varepsilon^{(5)}_{0,t}&=-3\varepsilon^{(5)}_{0,t-}dt
    +0.9dW^{(5)}_{3,t}+\int_{\mathbb{R}\backslash \{0\}}zp_{3,5}(dt,dz),\quad 
    \varepsilon^{(5)}_{0,0}=0,\\
    d\varepsilon^{(6)}_{0,t}&=-5\varepsilon^{(6)}_{0,t-}dt
    +0.8dW^{(6)}_{3,t}+\int_{\mathbb{R}\backslash \{0\}}zp_{3,6}(dt,dz),\quad 
    \varepsilon^{(6)}_{0,0}=0,\\
    d\varepsilon^{(7)}_{0,t}&=-2\varepsilon^{(7)}_{0,t-}dt
    +1.2dW^{(7)}_{3,t}+\int_{\mathbb{R}\backslash \{0\}}zp_{3,7}(dt,dz),\quad 
    \varepsilon^{(7)}_{0,0}=0
\end{align*}
and
\begin{align*}
    d\varepsilon^{(8)}_{0,t}&=-6\varepsilon^{(8)}_{0,t-}dt
    +1.1dW^{(8)}_{3,t}+\int_{\mathbb{R}\backslash \{0\}}zp_{3,8}(dt,dz),\quad 
    \varepsilon^{(8)}_{0,0}=0,
\end{align*}
where $\{W_{3,t}\}_{t\geq 0}$ is an $8$-dimensional standard Wiener process, and 
$p_{3,i}(dt,dz)$ for $i=1,\ldots,8$ are Poisson random measures on $\mathbb{R}^+\times \mathbb{R}\backslash\{0\}$ with the jump densities $f_{3,1}(z)=g(z|0,2)$, $f_{3,2}(z)=g(z|0,3)$, $f_{3,3}(z)=g(z|0,5)$, $f_{3,4}(z)=g(z|0,4)$, $f_{3,5}(z)=g(z|0,2)$, $f_{3,6}(z)=g(z|0,3)$, $f_{3,7}(z)=g(z|0,5)$ and $f_{3,8}(z)=g(z|0,3)$, respectively. The stochastic process $\{\zeta_{0,t}\}_{t\geq 0}$ is defined by
the following stochastic differential equations:
\begin{align*}
    d\zeta^{(1)}_{0,t}&=-2\zeta^{(1)}_{0,t-}dt+1.4dW^{(1)}_{4,t}
    +\int_{\mathbb{R}\backslash \{0\}}z p_{4,1}(dt,dz), \quad 
    \zeta^{(1)}_{0,0}=0
\end{align*}
and
\begin{align*}
    d\zeta^{(2)}_{0,t}&=-4\zeta^{(2)}_{0,t-}dt+0.6dW^{(2)}_{4,t}
    +\int_{\mathbb{R}\backslash \{0\}}zp_{4,2}(dt,dz), \quad 
    \zeta^{(2)}_{0,0}=0,
\end{align*}
where $\{W_{4,t}\}_{t\geq 0}$ is a $2$-dimensional standard Wiener process, and $p_{4,1}(dt,dz)$ and $p_{4,2}(dt,dz)$ are Poisson random measures on $\mathbb{R}^+\times \mathbb{R}\backslash\{0\}$ with the jump densities $f_{4,1}(z)=g(z|0,3)$ and $f_{4,2}(z)=g(z|0,2)$, respectively. 
Figure \ref{modeltrue} shows the path diagram of the true model.
\begin{figure}[h]
    \includegraphics[width=0.9\columnwidth]{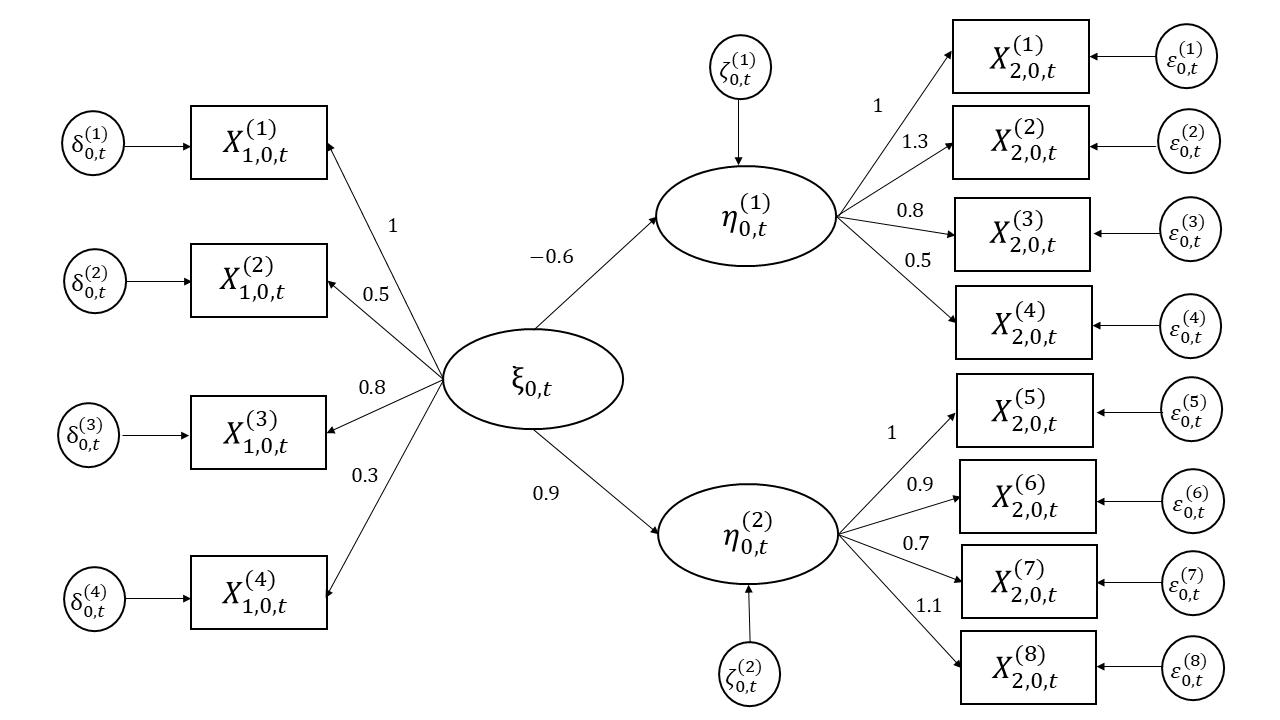}
    \caption{The path diagram of the true model at time $t$.} \label{modeltrue}
\end{figure}
\subsection{Model 1}
Let $p_1=4$, $p_2=8$, $k_1=1$, $k_2=2$ and $q=26$. Suppose that
\begin{align*}
    {\bf{\Lambda}}_{1,1}^{\theta}=\begin{pmatrix}
    1 & \theta_1^{(1)} & \theta_1^{(2)} & \theta_1^{(3)}
    \end{pmatrix}^{\top}
\end{align*}
and
\begin{align*}
    {\bf{\Lambda}}_{2,1}^{\theta}=\begin{pmatrix}
    1 & \theta_1^{(4)} & \theta_1^{(5)} & \theta_1^{(6)} & 0 & 0 & 0 & 0\\
    0 & 0 & 0 & 0 & 1 & \theta_1^{(7)} & \theta_1^{(8)} & \theta_1^{(9)}
    \end{pmatrix}^{\top},\quad {\bf{\Gamma}}_1^{\theta}=\begin{pmatrix}
    \theta_1^{(10)}\\
    \theta_1^{(11)}
    \end{pmatrix},\quad {\bf{\Psi}}_1^{\theta}=\mathbb{I}_2,
\end{align*}
where $\theta_1^{(i)}$ for $i=1,\ldots,11$ are not zero. Moreover, assume that
\begin{align*}
    {\bf{\Sigma}}_{\xi\xi,1}^{\theta}=\theta_1^{(12)},\quad 
    {\bf{\Sigma}}_{\delta\delta,1}^{\theta}=\Diag\bigl(\theta_1^{(13)}, \theta_1^{(14)},\theta_1^{(15)},\theta_1^{(16)}\bigr)^{\top}
\end{align*}
and
\begin{align*}
    {\bf{\Sigma}}_{\varepsilon\varepsilon,1}^{\theta}=\Diag\bigl( \theta_1^{(17)},\theta_1^{(18)},\theta_1^{(19)}, \theta_1^{(20)}, \theta_1^{(21)}, \theta_1^{(22)}, \theta_1^{(23)}, \theta_1^{(24)}\bigr)^{\top},\quad {\bf{\Sigma}}_{\zeta\zeta,1}^{\theta}=\Diag\bigl(\theta_1^{(25)}, \theta_1^{(26)}\bigr)^{\top},
\end{align*}
where $\theta_1^{(i)}$ for $i=12,\ldots,26$ are positive. Set the parameter space of Model 1 as
\begin{align*}
    \Theta_1=\bigl[(-100,0)\cup (0,100)\bigr]^{11}\times (0.01,100)^{15}.
\end{align*}
This model is correctly specified since 
\begin{align*}
    {\bf{\Sigma}}_0={\bf{\Sigma}}_1(\theta_{1,0}),
\end{align*}
where 
\begin{align*}
    \theta_{1,0}&=\bigl(0.5,0.8,0.3,1.3,0.8,0.5,0.9,0.7,1.1,-0.6,0.9,0.64,0.36,1.44,  \\
    &\qquad\qquad 0.64,0.49,1.69,0.25,0.49,0.36,0.81,0.64,1.44,1.21,1.96,0.36
    \bigr)^{\top}\in\Theta_1.
\end{align*}
In a similar manner to the proof in Appendix of Kusano and Uchida \cite{Kusano(BIC)}, we have
\begin{align*}
    {\bf{\Sigma}}_1(\theta_{1})={\bf{\Sigma}}_1(\theta_{1,0})\Longrightarrow\theta_1=\theta_{1,0}.
\end{align*}
Furthermore, $\rank\Delta_{1,0}=26$, which ensures ${\bf{[B1]}}$. Figure \ref{model1} illustrates the path diagram of Model 1 at time $t$. 
\begin{figure}[h]
    \includegraphics[width=0.9\columnwidth]{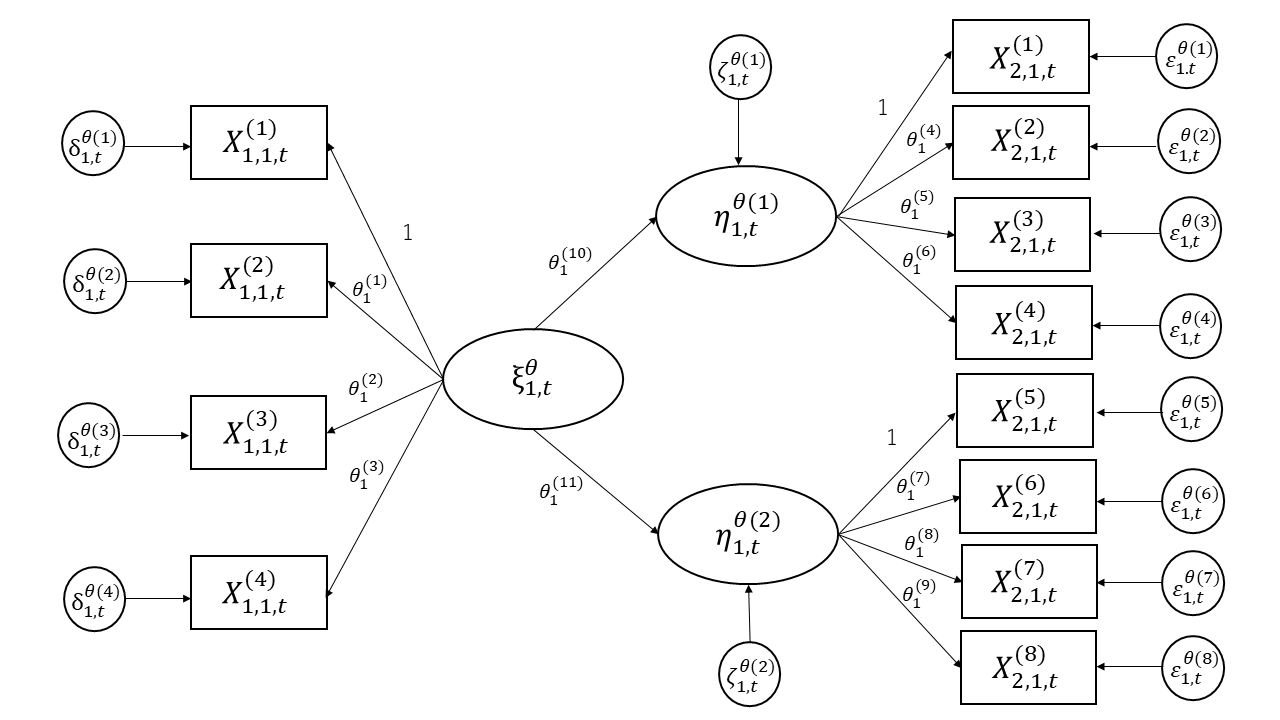}
    \caption{The path diagram of Model 1 at time $t$.} \label{model1}
\end{figure}
\subsection{Model 2}
Let $p_1=4$, $p_2=8$, $k_1=1$, $k_2=2$ and $q=27$. Assume that
\begin{align*}
    {\bf{\Lambda}}_{1,2}^{\theta}=\begin{pmatrix}
    1 & \theta_2^{(1)} & \theta_2^{(2)} & \theta_2^{(3)}
    \end{pmatrix}^{\top}
\end{align*}
and
\begin{align*}
    {\bf{\Lambda}}_{2,2}^{\theta}=\begin{pmatrix}
    1 & \theta_2^{(4)} & \theta_2^{(5)} & \theta_2^{(6)} & 0 & \theta_2^{(7)} & 0 & 0\\
    0 & 0 & 0 & 0 & 1 & \theta_2^{(8)} & \theta_2^{(9)} & \theta_2^{(10)}
    \end{pmatrix}^{\top},\quad {\bf{\Gamma}}_2^{\theta}=\begin{pmatrix}
    \theta_2^{(11)}\\
    \theta_2^{(12)}
    \end{pmatrix},\quad {\bf{\Psi}}_2^{\theta}=\mathbb{I}_2,
\end{align*}
where $\theta_2^{(i)}$ for $i=1,\ldots,6$ and $i=8,\ldots,12$ are not zero. Furthermore, suppose that
\begin{align*}
    {\bf{\Sigma}}_{\xi\xi,2}^{\theta}=\theta_2^{(13)},\quad 
    {\bf{\Sigma}}_{\delta\delta,2}^{\theta}=\Diag\bigl(\theta_2^{(14)}, \theta_2^{(15)},\theta_2^{(16)},\theta_2^{(17)}\bigr)^{\top}
\end{align*}
and
\begin{align*}
    {\bf{\Sigma}}_{\varepsilon\varepsilon,2}^{\theta}=\Diag\bigl( \theta_2^{(18)},\theta_2^{(19)}, \theta_2^{(20)}, \theta_2^{(21)}, \theta_2^{(22)}, \theta_2^{(23)}, \theta_2^{(24)},\theta_2^{(25)}\bigr)^{\top},\quad {\bf{\Sigma}}_{\zeta\zeta,2}^{\theta}=\Diag\bigl(\theta_2^{(26)}, \theta_2^{(27)}\bigr)^{\top},
\end{align*}
where $\theta_2^{(i)}$ for $i=13,\ldots,27$ are positive. The parameter space of Model 2 is defined by
\begin{align*}
    \Theta_2=\bigl[(-100,0)\cup (0,100)\bigr]^{6}\times (-100,100)\times
    \bigl[(-100,0)\cup (0,100)\bigr]^{5}
    \times (0.01,100)^{15}.
\end{align*}
This model is correctly specified since 
\begin{align*}
    {\bf{\Sigma}}_0={\bf{\Sigma}}_2(\theta_{2,0}),
\end{align*}
where 
\begin{align*}
    \theta_{2,0}&=\bigl(0.5,0.8,0.3,1.3,0.8,0.5,0,0.9,0.7,1.1,-0.6,0.9,0.64,0.36,1.44,  \\
    &\qquad\qquad 0.64,0.49,1.69,0.25,0.49,0.36,0.81,0.64,1.44,1.21,1.96,0.36
    \bigr)^{\top}\in\Theta_2.
\end{align*}
Since $\rank\Delta_{2,0}=27$ and 
\begin{align}
    {\bf{\Sigma}}_2(\theta_{2})={\bf{\Sigma}}_2(\theta_{2,0})\Longrightarrow\theta_2=\theta_{2,0}, \label{model2iden}
\end{align}
Model 2 satisfies ${\bf{[B1]}}$. See Appendix \ref{iden} for the proof of (\ref{model2iden}). Figure \ref{model2} shows the path diagram of Model 2 at time $t$. 
\begin{figure}[h]
    \includegraphics[width=0.9\columnwidth]{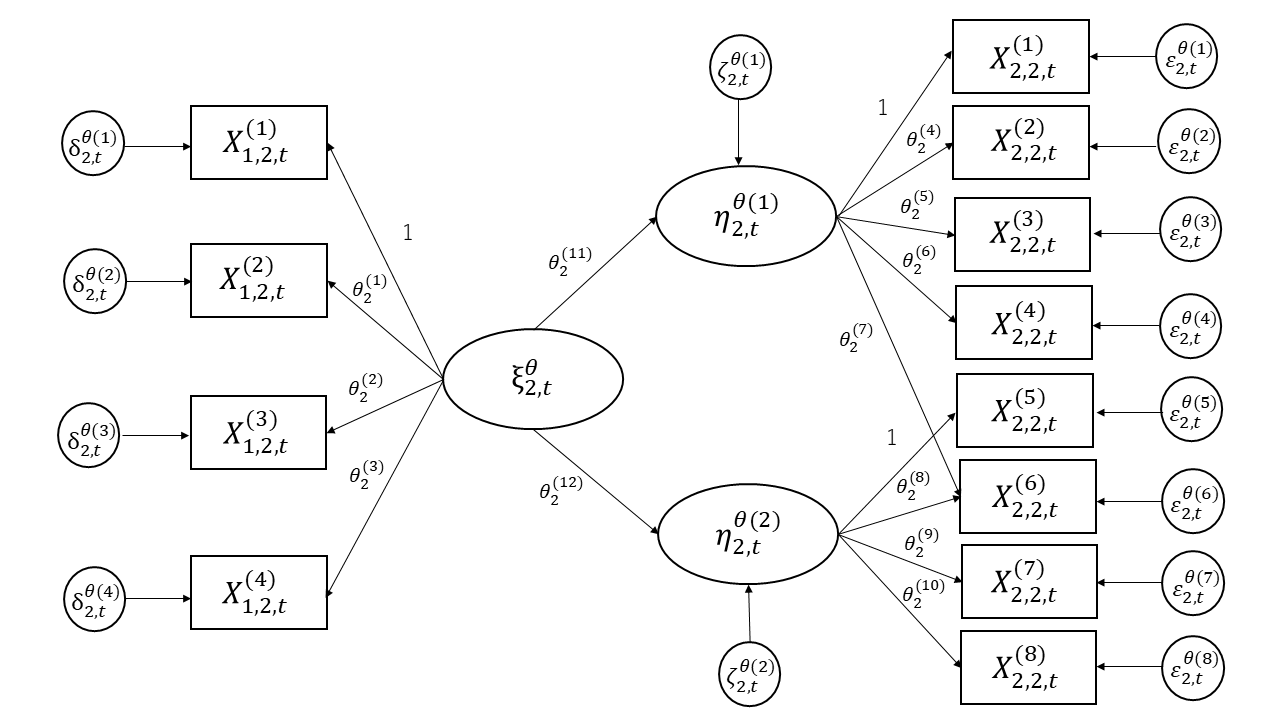}
    \caption{The path diagram of Model 2 at time $t$.} \label{model2}
\end{figure}
\subsection{Model 3}
Let $p_1=4$, $p_2=8$, $k_1=1$, $k_2=1$ and $q=25$. Suppose that
\begin{align*}
    {\bf{\Lambda}}_{1,3}^{\theta}=\begin{pmatrix}
    1 & \theta_3^{(1)} & \theta_3^{(2)} & \theta_3^{(3)}
    \end{pmatrix}^{\top}
\end{align*}
and
\begin{align*}
    {\bf{\Lambda}}_{2,3}^{\theta}=\begin{pmatrix}
    1 & \theta_3^{(4)} & \theta_3^{(5)} & \theta_3^{(6)} &\theta_3^{(7)} & \theta_3^{(8)} & \theta_3^{(9)} & \theta_3^{(10)}
    \end{pmatrix}^{\top},\quad {\bf{\Gamma}}_3^{\theta}=
    \theta_3^{(11)},
\end{align*}
where $\theta_3^{(i)}$ for $i=1,\ldots,11$ are not zero. Assume that
\begin{align*}
    {\bf{\Sigma}}_{\xi\xi,3}^{\theta}=\theta_3^{(12)},\quad 
    {\bf{\Sigma}}_{\delta\delta,3}^{\theta}=\Diag\bigl(\theta_3^{(13)}, \theta_3^{(14)},\theta_3^{(15)},\theta_3^{(16)} \bigr)^{\top}
\end{align*}
and
\begin{align*}
    {\bf{\Sigma}}_{\varepsilon\varepsilon,3}^{\theta}=\Diag\bigl( \theta_3^{(17)},\theta_3^{(18)},\theta_3^{(19)}, \theta_3^{(20)}, \theta_3^{(21)}, \theta_3^{(22)},\theta_3^{(23)}, \theta_3^{(24)}\bigr)^{\top},\quad {\bf{\Sigma}}_{\zeta\zeta,3}^{\theta}=\theta_3^{(25)},
\end{align*}
where $\theta_3^{(i)}$ for $i=12,\ldots,25$ are positive. The parameter space of Model 3 is set to
\begin{align*}
    \Theta_3=\bigl[(-100,0)\cup (0,100)\bigr]^{11}\times (0.01,100)^{14}.
\end{align*}
For all $\theta_3\in\Theta_3$, it holds that
\begin{align*}
    {\bf{\Sigma}}_0\neq{\bf{\Sigma}}_3(\theta_{3,0}),
\end{align*}
so that Model $3$ is a misspecified model. Figure \ref{model3} gives the path diagram of Model 3 at time $t$.
\begin{figure}[h]
    \includegraphics[width=0.9\columnwidth]{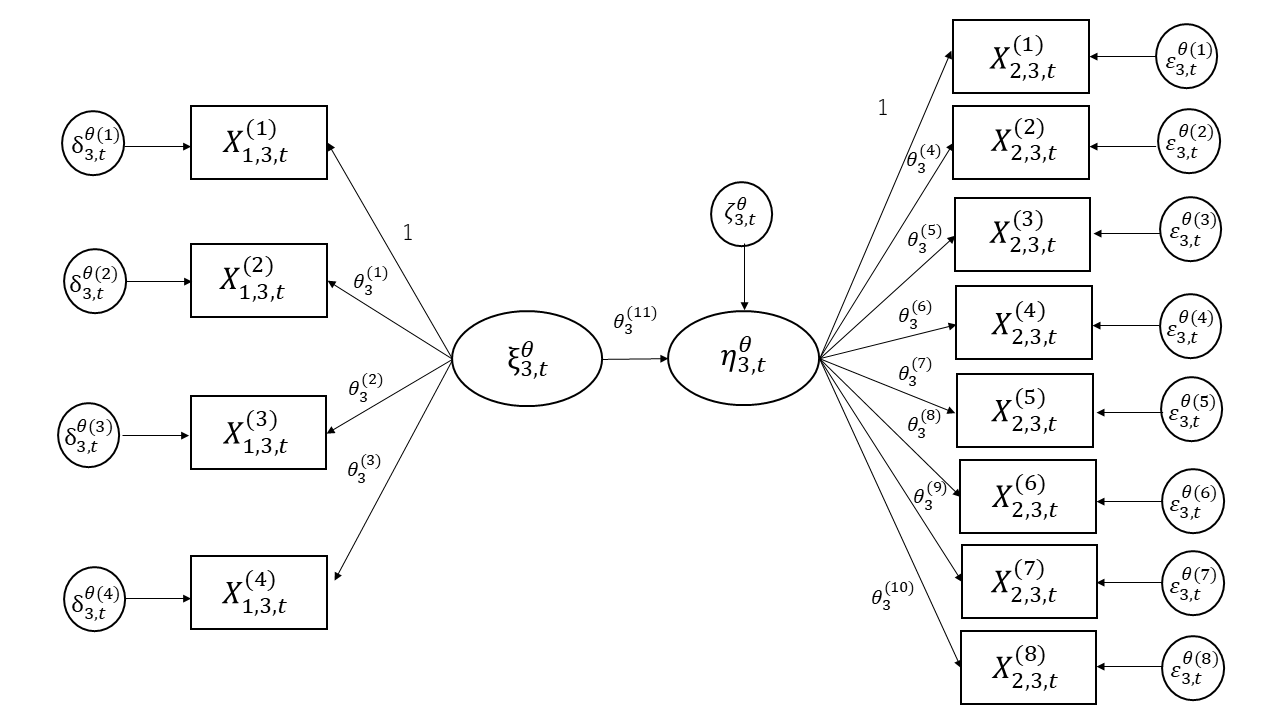}
    \caption{The path diagram of Model 3 at time $t$.} \label{model3}
\end{figure}
\subsection{Simulation results}
The simulation study was conducted by using the optim() function in R with the BFGS method, where the initial parameter was set to the true parameter. The simulation was conducted for 10,000 iterations with $T=1$, $D=10$ and $\rho=0.4$, and the sample sizes were chosen as $10^3$, $10^4$ and $10^5$. Tables 1 and 2 summarize the sample means and standard deviations of the estimators $\hat{\theta}_{1,n}$ and $\hat{\theta}_{2,n}$. The results imply that both estimators are consistent. Table 3 reports the number of times each model was selected by QAIC.
Since Model 3 was never chosen, Theorem 3 holds in this setting. Moreover, QAIC often selects the over-fitted Model 2 even for large sample sizes. This implies that QAIC lacks model-selection consistency.
\begin{table}[h]
\ \\ \ \\ \ \\ \ \\ \ \\ \ \\ \ \\ \ \\ \ \\ \ \\
\centering
\begin{tabular}{ccccccc}
    & $\hat{\theta}_{1,n}^{(1)}$ & $\hat{\theta}_{1,n}^{(2)}$ & $\hat{\theta}_{1,n}^{(3)}$ & $\hat{\theta}_{1,n}^{(4)}$ & $\hat{\theta}_{1,n}^{(5)}$ & $\hat{\theta}_{1,n}^{(6)}$\\\hline
    $n=10^3$ & 0.501 (0.060) & 0.801 (0.052) & 0.300 (0.037) & 1.301 (0.042) & 0.801 (0.029) & 0.501 (0.020)\\
    $n=10^4$  & 0.500 (0.018) & 0.800 (0.016) & 0.300 (0.011)& 1.300 (0.013) & 0.800 (0.009) & 0.500 (0.006)\\
    $n=10^5$ & 0.500 (0.006) & 0.800 (0.005) & 0.300 (0.003)& 1.300 (0.004) & 0.800 (0.003) & 0.500 (0.002)\\
    & $\hat{\theta}_{1,n}^{(7)}$ & $\hat{\theta}_{1,n}^{(8)}$ & $\hat{\theta}_{1,n}^{(9)}$ & $\hat{\theta}_{1,n}^{(10)}$ & $\hat{\theta}_{1,n}^{(11)}$ & $\hat{\theta}_{1,n}^{(12)}$ \\\hline
    $n=10^3$ & 0.901 (0.051) & 0.701 (0.055) & 1.102 (0.063)& -0.601 (0.072) & 0.900 (0.063) & 0.679 (0.069) \\
    $n=10^4$ & 0.900 (0.015) & 0.700 (0.017) & 1.100 (0.019) & -0.600 (0.023) & 0.900 (0.019) & 0.644 (0.017)\\
    $n=10^5$ & 0.900 (0.005) & 0.700 (0.005) & 1.100 (0.006) & -0.600 (0.007) & 0.900 (0.006) & 0.640 (0.005)\\
    & $\hat{\theta}_{1,n}^{(13)}$ & $\hat{\theta}_{1,n}^{(14)}$ & $\hat{\theta}_{1,n}^{(15)}$ & $\hat{\theta}_{1,n}^{(16)}$ & $\hat{\theta}_{1,n}^{(17)}$ & $\hat{\theta}_{1,n}^{(18)}$ \\\hline
    $n=10^3$ & 0.385 (0.073) & 1.482 (0.102) & 0.674 (0.092) & 0.527 (0.079) & 1.724 (0.118) & 0.279 (0.093)\\
    $n=10^4$ & 0.362 (0.013) & 1.443 (0.022) & 0.642 (0.014) & 0.493 (0.011) & 1.692 (0.027) & 0.252 (0.016)\\
    $n=10^5$ & 0.360 (0.003) & 1.440 (0.007) & 0.640 (0.004) & 0.490 (0.002) & 1.690 (0.008) & 0.250 (0.005)\\
    & $\hat{\theta}_{1,n}^{(19)}$ & $\hat{\theta}_{1,n}^{(20)}$ & $\hat{\theta}_{1,n}^{(21)}$ & $\hat{\theta}_{1,n}^{(22)}$ & 
    $\hat{\theta}_{1,n}^{(23)}$ & $\hat{\theta}_{1,n}^{(24)}$
    \\\hline
    $n=10^3$ & 0.546 (0.102) & 0.388 (0.067) & 0.844 (0.098) & 0.680 (0.089) & 1.471 (0.096) & 1.257 (0.110)\\
    $n=10^4$ & 0.495 (0.013) & 0.362 (0.009) & 0.812 (0.018) & 0.644 (0.015) & 1.442 (0.023) & 1.214 (0.024)\\
    $n=10^5$ & 0.490 (0.003) & 0.360 (0.002) & 0.810 (0.005) & 0.640 (0.004) & 1.440 (0.007) & 1.210 (0.007)\\
    & $\hat{\theta}_{1,n}^{(25)}$ & $\hat{\theta}_{1,n}^{(26)}$
    \\\hline
    $n=10^3$ & 1.969 (0.149) & 0.372 (0.053)\\
    $n=10^4$ & 1.961 (0.046) & 0.361 (0.014)\\
    $n=10^5$ & 1.960 (0.015) & 0.360 (0.004)\\
\end{tabular}
\caption{Sample mean (sample standard deviation) of $\hat{\theta}_{1,n}$.}
\label{thetatable1}
\end{table}
\begin{table}[h]
\ \\ \ \\ \ \\ \ \\ \ \\ \ \\
\centering
\begin{tabular}{ccccccc}
    & $\hat{\theta}_{2,n}^{(1)}$ & $\hat{\theta}_{2,n}^{(2)}$ & $\hat{\theta}_{2,n}^{(3)}$ & $\hat{\theta}_{2,n}^{(4)}$ & $\hat{\theta}_{2,n}^{(5)}$ & $\hat{\theta}_{2,n}^{(6)}$\\\hline
    $n=10^3$ & 0.501 (0.060) & 0.801 (0.052) & 0.300 (0.037) & 1.301 (0.042) & 0.801 (0.029) & 0.501 (0.021)\\
    $n=10^4$  & 0.500 (0.018) & 0.800 (0.016) & 0.300 (0.011) & 1.300 (0.013) & 0.800 (0.009) & 0.500 (0.006)\\
    $n=10^5$ & 0.500 (0.006) & 0.800 (0.005) & 0.300 (0.003) & 1.300 (0.004) & 0.800 (0.003) & 0.500 (0.002)\\
    & $\hat{\theta}_{2,n}^{(7)}$ & $\hat{\theta}_{2,n}^{(8)}$ & $\hat{\theta}_{2,n}^{(9)}$ & $\hat{\theta}_{2,n}^{(10)}$ & $\hat{\theta}_{2,n}^{(11)}$ & $\hat{\theta}_{2,n}^{(12)}$ \\\hline
    $n=10^3$ & 0.000 (0.028) & 0.901 (0.051) & 0.701 (0.055)& 1.102 (0.063) & -0.601 (0.072) & 0.900 (0.063)\\
    $n=10^4$ & 0.000 (0.008) & 0.900 (0.015) & 0.700 (0.017) & 1.100 (0.019) & -0.600 (0.023) & 0.900 (0.019)\\
    $n=10^5$ & 0.000 (0.002) & 0.900 (0.005) & 0.700 (0.005) & 1.100 (0.006) & -0.600 (0.007) & 0.900 (0.006)\\
    & $\hat{\theta}_{2,n}^{(13)}$ & $\hat{\theta}_{2,n}^{(14)}$ & $\hat{\theta}_{2,n}^{(15)}$ & $\hat{\theta}_{2,n}^{(16)}$ & $\hat{\theta}_{2,n}^{(17)}$ & $\hat{\theta}_{2,n}^{(18)}$ \\\hline
    $n=10^3$ & 0.679 (0.069) & 0.385 (0.073) & 1.482 (0.102)& 0.674 (0.092) & 0.527 (0.079) & 1.724 (0.118)\\
    $n=10^4$ & 0.644 (0.017) & 0.362 (0.013) & 1.443 (0.022) & 0.642 (0.014) & 0.493 (0.011) & 1.692 (0.027)\\
    $n=10^5$ & 0.640 (0.005) & 0.360 (0.003) & 1.440 (0.007) & 0.640 (0.004) & 0.490 (0.002) & 1.690 (0.008)\\
    & $\hat{\theta}_{2,n}^{(19)}$ & $\hat{\theta}_{2,n}^{(20)}$ & $\hat{\theta}_{2,n}^{(21)}$ & $\hat{\theta}_{2,n}^{(22)}$ & 
    $\hat{\theta}_{2,n}^{(23)}$ & $\hat{\theta}_{2,n}^{(24)}$
    \\\hline
    $n=10^3$ & 0.279 (0.093) & 0.546 (0.102) & 0.387 (0.067) & 0.844 (0.098) & 0.680 (0.089) & 1.471 (0.096) \\
    $n=10^4$ & 0.252 (0.016) & 0.495 (0.013) & 0.362 (0.009) & 0.812 (0.018) & 0.644 (0.015) & 1.442 (0.023)\\
    $n=10^5$ & 0.250 (0.005) & 0.490 (0.003) & 0.360 (0.002) & 0.810 (0.005) & 0.640 (0.004) & 1.440 (0.007)\\
    & $\hat{\theta}_{2,n}^{(25)}$ & $\hat{\theta}_{2,n}^{(26)}$ & $\hat{\theta}_{2,n}^{(27)}$
    \\\hline
    $n=10^3$ & 1.257 (0.110) & 1.969 (0.149) & 0.372 (0.053) \\
    $n=10^4$ & 1.214 (0.024) & 1.961 (0.046) & 0.361 (0.014)\\
    $n=10^5$ & 1.210 (0.007) & 1.960 (0.015) & 0.360 (0.004)\\
\end{tabular}
\caption{Sample mean (sample standard deviation) of $\hat{\theta}_{2,n}$.}
\label{thetatable2}
\end{table}
\begin{table}[h]
\ \\ \ \\ \ \\ \ \\
\centering
\begin{tabular}{ccccc}
    &  $n=10^3$ & $n=10^4$ & $n=10^5$ \\\hline
    Model 1 & 8065 & 8331 & 8396 \\
    Model 2 & 1935 & 1669 & 1604 \\
    Model 3 & 0 & 0 & 0
\end{tabular}
    \caption{The number of times each model was selected by QAIC.}\label{table}
\end{table}
\newpage
\clearpage
\section{Proofs}\label{proofs}
For a positive sequence $\{a_{n}\}_{n\in\mathbb{N}}$, let $R:(0,\infty)\times \mathbb{R}^d\rightarrow \mathbb{R}$ denote a class of functions satisfying 
\begin{align*}
    |R({a_{n}},x)|\leq a_{n}C(1+|x|)^C
\end{align*}
for some constant $C>0$ independent of $x$ and $\theta$. For a process $S=\{S_t\}_{t\geq 0}$, define
\begin{align*}
    R_i(a_n,S)=R(a_n,S_{t_i^n}).
\end{align*}
and set $\mathcal{F}_{i}^n=\mathcal{F}_{t_{i}^n}$ for $i=0,\ldots,n$. 
For a real-valued random variable $F$, $F\in L^1({\bf{P}})$ and $F\in L^2({\bf{P}})$ mean that $|F|$ and $F^2$ are integrable under the probability measure ${\bf{P}}$, respectively; that is,
\begin{align*}
    {\bf{E}}[|F|]<\infty,\quad {\bf{E}}[F^2]<\infty.
\end{align*}
In addition, define
\begin{align*}
    R_{i}(a_n,\xi,\delta,\varepsilon,\zeta)=R_{i}(a_n,\xi_0)+
    R_{i}(a_n,\delta_0)+R_{i}(a_n,\varepsilon_0)+R_{i}(a_n,\zeta_0)
\end{align*}
and
\begin{align*}
    \bar{R}_{i}(a_n,\xi,\delta,\varepsilon,\zeta)=1-R_{i}(a_n,\xi,\delta,\varepsilon,\zeta)
\end{align*}
for $i=0,\ldots,n$. Throughout this section, the model index “$m$” may be omitted for simplicity.
\begin{lemma}\label{EX2abslemma}
Suppose that $[{\bf{A1}}]$-$[{\bf{A4}}]$ hold. Then, 
\begin{align*}
    {\bf{E}}\Bigl[\bigl|(\Delta_{i}^n X)^{(j_1)}\bigr|\bigl|(\Delta_{i}^n X)^{(j_2)}\bigr|
    {\bf{1}}_{\{|\Delta_{i}^n X|\leq Dh_n^{\rho}\}}\big|\mathcal{F}^n_{i-1}\Bigr]=R_{i-1}(h_n,\xi,\delta,\varepsilon,\zeta)
\end{align*}
for a sufficiently large $n$ and $j_1,j_2=1,\ldots,p$.
\end{lemma}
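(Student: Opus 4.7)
The plan is to decompose the conditional expectation according to whether any of the four Poisson measures fires on $(t_{i-1}^n,t_i^n]$. Let $A_i$ denote the $\mathcal{F}_i^n$-measurable event that $p_j((t_{i-1}^n,t_i^n]\times E_j)=0$ for all $j=1,2,3,4$. Since $\int_{E_j} f_j(z_j)dz_j=\lambda_{j,0}<\infty$ under $f_j=\lambda_{j,0}F_j$ with $F_j$ a probability density, and the four Poisson measures are independent, one has ${\bf{P}}(A_i^c|\mathcal{F}_{i-1}^n)\leq Ch_n$ for some $C>0$. Using the elementary inequality $|(\Delta_{i}^n X)^{(j_1)}||(\Delta_{i}^n X)^{(j_2)}|\leq |\Delta_{i}^n X|^2$, I would reduce the target to bounding
\begin{align*}
    &{\bf{E}}\Bigl[|\Delta_{i}^n X|^2{\bf{1}}_{A_i}\big|\mathcal{F}^n_{i-1}\Bigr]\\
    &\qquad+{\bf{E}}\Bigl[|\Delta_{i}^n X|^2{\bf{1}}_{A_i^c}{\bf{1}}_{\{|\Delta_{i}^n X|\leq Dh_n^{\rho}\}}\big|\mathcal{F}^n_{i-1}\Bigr].
\end{align*}

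On $A_i$, the increment $\Delta_i^n X$ reduces to its continuous part, which is a fixed linear combination (via ${\bf{\Lambda}}_{1,0}$, ${\bf{\Lambda}}_{2,0}$, ${\bf{\Psi}}_0^{-1}$, ${\bf{\Gamma}}_0$) of the drift and Brownian integrals of $\xi_{0}$, $\delta_{0}$, $\varepsilon_{0}$, $\zeta_{0}$. I would apply Jensen's inequality to the drift integrals together with the linear growth of $a_j$ implied by ${\bf{[A1]}}$, and the It\^o isometry to the Brownian integrals, to obtain a term of order $h_n$ whose prefactor involves $\sup_{s\in[t_{i-1}^n,t_i^n]}|\cdot_{0,s}|$ for each latent process. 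Standard Gronwall-type estimates under ${\bf{[A1]}}$ and ${\bf{[A3]}}$ then control the conditional $L^p$-moments of these suprema by polynomials in $|\xi_{0,t_{i-1}^n}|$, $|\delta_{0,t_{i-1}^n}|$, $|\varepsilon_{0,t_{i-1}^n}|$, $|\zeta_{0,t_{i-1}^n}|$, producing the required form $R_{i-1}(h_n,\xi,\delta,\varepsilon,\zeta)$.

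On $A_i^c$, the truncation directly forces $|\Delta_i^n X|^2\leq D^2 h_n^{2\rho}$, so combined with ${\bf{P}}(A_i^c|\mathcal{F}_{i-1}^n)\leq Ch_n$ this second contribution is bounded by $CD^2 h_n^{1+2\rho}\leq Ch_n$ for $n$ large (using $\rho\in[3/8,1/2)$), and therefore absorbed into $R_{i-1}(h_n,\xi,\delta,\varepsilon,\zeta)$ with no dependence on the latent values at all.

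The hard part will be the bookkeeping in the continuous case: verifying that every constant produced by the moment estimates can be expressed in the polynomial-growth form characterising the $R$ class, with dependence only on the $\mathcal{F}_{i-1}^n$-measurable values $\xi_{0,t_{i-1}^n}$, $\delta_{0,t_{i-1}^n}$, $\varepsilon_{0,t_{i-1}^n}$, $\zeta_{0,t_{i-1}^n}$ rather than on the full paths on $[t_{i-1}^n,t_i^n]$. This amounts to a short-interval $L^p$-estimate for jump-diffusions which is standard under ${\bf{[A1]}}$ and ${\bf{[A3]}}$; I would quote it along the lines of Shimizu--Yoshida \cite{Shimizu(2006)} and Kusano--Uchida \cite{Kusano(jump)} rather than re-derive it.
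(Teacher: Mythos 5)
Your argument is correct, and it reaches the conclusion by the same underlying mechanism as the paper (separate the no-jump and jump scenarios, use continuous-part moment bounds on the former, and use the truncation together with a jump-probability bound on the latter), but your decomposition is genuinely coarser and this is worth remarking on. The paper partitions $\{|\Delta_i^n X|\le Dh_n^{\rho}\}$ according to the exact jump counts $J_{i,j}^n\in\{0,1,\ge 2\}$ of all four Poisson measures (the events $C^n_{i,k_1,k_2,k_3,k_4}$) and invokes the refined conditional probability estimates of Lemmas \ref{Clemma} and \ref{Dlemma}, which rely on {\bf{[A3]}}--{\bf{[A4]}} to show, e.g., that a single jump compatible with $|\Delta_i^n X|\le Dh_n^{\rho}$ has probability $R_{i-1}(h_n^{\rho+1},\xi,\delta,\varepsilon,\zeta)$; it also keeps the exact identity ${\bf{1}}_{\{J=0\}}={\bf{1}}_{C^n_{i,0,0,0,0}}+{\bf{1}}_{D^n_{i,0,0,0,0}}$ and must therefore subtract the $D^n_{i,0,0,0,0}$ term by Cauchy--Schwarz. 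Your version replaces all of this by the union bound ${\bf{P}}(A_i^c\mid\mathcal{F}_{i-1}^n)\le (\sum_j\lambda_{j,0})h_n$ and the crude estimate $D^2h_n^{2\rho}\cdot Ch_n=o(h_n)$, and by bounding ${\bf{1}}_{\{|\Delta_i^n X|\le Dh_n^{\rho}\}}\le 1$ on the no-jump event; since the statement only asserts membership in the class $R_{i-1}(h_n,\xi,\delta,\varepsilon,\zeta)$ and the integrand is nonnegative, these one-sided, unrefined bounds suffice. What the paper's heavier machinery buys is not needed here but is essential for the companion Lemma \ref{Elemma}, where the remainder must be $R_{i-1}(h_n^{2},\xi,\delta,\varepsilon,\zeta)$ and the naive single-jump bound $h_n^{1+2\rho}$ (with $\rho<1/2$) is too weak; the paper simply reuses that framework uniformly. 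One small point to make explicit in your write-up: on $A_i$ the identification of $\Delta_i^n X$ with the increment of the continuous SDE restarted from the $\mathcal{F}_{i-1}^n$-measurable value $X_{t_{i-1}^n}$ (and the independence of the Brownian increments from the jump counts) is what licenses applying the short-interval $L^p$ estimate; this is exactly the role of the auxiliary process $X^c$ in the paper's proof, and quoting Lemma 20 of Kusano and Uchida \cite{Kusano(2023)} or Proposition 3.1 of Shimizu and Yoshida \cite{Shimizu(2006)} for that estimate is consistent with what the paper itself does.
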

\begin{proof}
The result can be shown in an analogous manner to Propositions 2 and 6 in Kusano and Uchida \cite{Kusano(jump)}. See also Appendix \ref{proofEX2abslemma}.
\end{proof}
\begin{lemma}\label{H3lemma}
Suppose that $[{\bf{A1}}]$-$[{\bf{A4}}]$ hold. Then, for all $L>0$, 
\begin{align*}
    \sup_{n\in\mathbb{N}}{\bf{E}}\Biggl[\biggl(\frac{1}{n}\sup_{\theta\in\Theta}\bigl|\partial^3_{\theta}{\bf{H}}_n(\theta)\bigr|\biggr)^L\Biggr]<\infty.
\end{align*}
\end{lemma}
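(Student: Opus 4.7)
The plan is to reduce the lemma to a uniform $L^L$-bound on the truncated quadratic variation $S_n$ of $X$ by exploiting the smooth $\theta$-dependence of ${\bf{H}}_n$, and then to control $S_n$ via a martingale decomposition.

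Since $\bar\Theta$ is compact and ${\bf{\Sigma}}(\theta)$ is positive definite on $\bar\Theta$, the mappings $\theta\mapsto \partial^3_{\theta}{\bf{\Sigma}}(\theta)^{-1}$ and $\theta\mapsto \partial^3_{\theta}\log\det{\bf{\Sigma}}(\theta)$ are continuous on $\bar\Theta$ and hence uniformly bounded. Termwise differentiation of the quasi-likelihood, combined with $|v^{\top}Av|\le|A|\,|v|^2$, then produces a constant $C>0$ such that
\begin{align*}
\frac{1}{n}\sup_{\theta\in\Theta}\bigl|\partial^3_{\theta}{\bf{H}}_n(\theta)\bigr|\le \frac{C}{nh_n}\sum_{i=1}^n|\Delta_i^n X|^2{\bf{1}}_{\{|\Delta_i^n X|\le Dh_n^\rho\}}+C.
\end{align*}
Because $T=nh_n$ is fixed, the prefactor $1/(nh_n)=1/T$ is a universal constant, so it suffices to show that
\begin{align*}
\sup_{n\in\mathbb{N}}{\bf{E}}\bigl[S_n^L\bigr]<\infty,\qquad S_n:=\sum_{i=1}^n|\Delta_i^n X|^2{\bf{1}}_{\{|\Delta_i^n X|\le Dh_n^\rho\}},
\end{align*}
for every $L\ge 1$; the case $L\in(0,1)$ is then immediate from Jensen's inequality.

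To establish the display above, I would decompose $S_n=A_n+M_n$ into its predictable compensator $A_n=\sum_{i=1}^n{\bf{E}}[|\Delta_i^n X|^2{\bf{1}}_{\{|\Delta_i^n X|\le Dh_n^\rho\}}\mid\mathcal{F}^n_{i-1}]$ and the associated discrete martingale $M_n$. Summing the coordinatewise bounds of Lemma \ref{EX2abslemma} gives $A_n\le Ch_n\sum_{i=1}^n(1+|X_{t_{i-1}^n}|)^{C'}\le CT(1+\sup_{0\le t\le T}|X_t|)^{C'}$, so $A_n\in L^L({\bf{P}})$ uniformly in $n$ thanks to the standard polynomial moment bounds $\sup_n{\bf{E}}[\sup_{t\le T}|X_t|^k]<\infty$ available under ${\bf{[A1]}}$-${\bf{[A3]}}$. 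For $M_n$, the a.s.\ estimate $|\Delta_i^n X|^2{\bf{1}}_{\{|\Delta_i^n X|\le Dh_n^\rho\}}\le D^2h_n^{2\rho}$ yields the pointwise bound $[M]_n\le 2D^2h_n^{2\rho}(S_n+A_n)$, whence the Burkholder-Davis-Gundy inequality gives
\begin{align*}
{\bf{E}}\bigl[|M_n|^L\bigr]\le C_L\,{\bf{E}}\bigl[[M]_n^{L/2}\bigr]\le C_L'\,h_n^{\rho L}\,{\bf{E}}\bigl[S_n^{L/2}+A_n^{L/2}\bigr].
\end{align*}
Plugging this into ${\bf{E}}[S_n^L]\le 2^{L-1}({\bf{E}}[A_n^L]+{\bf{E}}[|M_n|^L])$ produces the recursion ${\bf{E}}[S_n^L]\le K_L+C_L''\,h_n^{\rho L}\,{\bf{E}}[S_n^{L/2}]$. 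Starting from the trivial $L=1$ base case ${\bf{E}}[S_n]={\bf{E}}[A_n]\le K_1$, a dyadic induction over $L=2,4,8,\ldots$ then produces uniform-in-$n$ bounds at every dyadic exponent, and intermediate $L\ge 1$ are handled by Jensen.

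The main obstacle is the careful bookkeeping of the polynomial-in-path factors introduced by Lemma \ref{EX2abslemma}: the bound for $A_n$ is random, involving $\sup_{0\le t\le T}|X_t|$ through a polynomial functional, so the whole argument hinges on the uniform-in-$n$ moment bound ${\bf{E}}[\sup_{t\le T}|X_t|^k]<\infty$ for every $k\ge 1$. In the jump-diffusion setting this requires the integrability $\int|z_i|^L f_i(z_i)\,dz_i<\infty$ in ${\bf{[A3]}}$ for every $L$, together with the Lipschitz estimates ${\bf{[A1]}}$ on $a_i$ and $c_i$; these are precisely the ingredients used for the analogous moment estimates in Kusano and Uchida \cite{Kusano(jump)}.
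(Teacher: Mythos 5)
Your proposal is correct, and its skeleton coincides with the paper's: reduce the supremum over $\theta$ to the truncated quadratic sum $\frac{1}{nh_n}\sum_i|\Delta_i^nX|^2{\bf{1}}_{\{|\Delta_i^nX|\le Dh_n^{\rho}\}}$ via compactness of $\bar\Theta$, split that sum into its compensator and a discrete martingale, control the compensator through Lemma \ref{EX2abslemma}, and control the martingale through a Burkholder-type inequality. Where you genuinely diverge is the martingale step. The paper bounds each increment directly, ${\bf{E}}\bigl[|{\bf{L}}_{i,n,k_1,k_2}|^L\bigr]\le C_Lh_n^{2L\rho}$, applies Jensen to $\bigl(\frac{1}{n}\sum_i{\bf{L}}_i^2\bigr)^{L/2}$, and ends up with the factor $n^{(1/2-2\rho)L}$, so it needs $\rho>1/4$ to conclude. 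You instead bound the bracket pathwise by $[M]_n\le 2D^2h_n^{2\rho}(S_n+A_n)$ and close a dyadic recursion ${\bf{E}}[S_n^L]\le K_L+C_Lh_n^{\rho L}{\bf{E}}[S_n^{L/2}]$ starting from ${\bf{E}}[S_n]={\bf{E}}[A_n]$. This is valid (and, as a by-product, does not use $\rho>1/4$ at all, only $\rho\ge 0$ and $T=nh_n$ fixed), at the price of an induction over dyadic exponents that the paper avoids. Two small points to tidy up: Lemma \ref{EX2abslemma} expresses the conditional expectation as $R_{i-1}(h_n,\xi,\delta,\varepsilon,\zeta)$, i.e.\ a polynomial in the \emph{latent} processes at $t_{i-1}^n$ rather than in $|X_{t_{i-1}^n}|$, so your bound on $A_n$ should be phrased in terms of $\sup_{t\le T}(|\xi_{0,t}|+|\delta_{0,t}|+|\varepsilon_{0,t}|+|\zeta_{0,t}|)$ (the needed moment bounds hold for these, so the conclusion is unaffected); and that lemma only holds for sufficiently large $n$, so the finitely many remaining indices must be disposed of separately, e.g.\ by the deterministic bound $S_n\le nD^2h_n^{2\rho}$, exactly as the paper does implicitly.
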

\begin{proof}
It is sufficient to prove that for all $L>0$,
\begin{align}
    \sup_{n\in\mathbb{N}}{\bf{E}}\Biggl[\biggl(\sup_{\theta\in\Theta}\Bigl|\frac{1}{n}\partial_{\theta^{(j_1)}}\partial_{\theta^{(j_2)}}\partial_{\theta^{(j_3)}}{\bf{H}}_n(\theta)\Bigr|\biggr)^L\Biggr]<\infty \label{suffH31}
\end{align}
for $j_1,j_2,j_3=1,\ldots,q$. Since 
\begin{align*}
    \frac{1}{n}\partial_{\theta^{(j_1)}}\partial_{\theta^{(j_2)}}\partial_{\theta^{(j_3)}}{\bf{H}}_n(\theta)
    &=-\frac{1}{2nh_n}\sum_{i=1}^n\sum_{k_1=1}^p\sum_{k_2=1}^p
    \bigl(\partial_{\theta^{(j_1)}}\partial_{\theta^{(j_2)}}
    \partial_{\theta^{(j_3)}}{\bf{\Sigma}}(\theta)^{-1}\bigr)_{k_1k_2}\\
    &\qquad\qquad\qquad\qquad\quad\times(\Delta_{i}^n X)^{(k_1)}(\Delta_{i}^n X)^{(k_2)}
    {\bf{1}}_{\{|\Delta_{i}^n X|\leq Dh_n^{\rho}\}}\\
    &\qquad-\partial_{\theta^{(j_1)}}\partial_{\theta^{(j_2)}}\partial_{\theta^{(j_3)}}\log\det {\bf{\Sigma}}(\theta)\times\frac{1}{2n}\sum_{i=1}^n{\bf{1}}_{\{|\Delta_{i}^n X|\leq Dh_n^{\rho}\}},
\end{align*}
we have
\begin{align*}
    &\quad\ \sup_{\theta\in\Theta}\Bigl|\frac{1}{n}\partial_{\theta^{(j_1)}}\partial_{\theta^{(j_2)}}\partial_{\theta^{(j_3)}}{\bf{H}}_n(\theta)\Bigr|\\
    &\leq\frac{1}{2nh_n}\sum_{i=1}^n\sum_{k_1=1}^p\sum_{k_2=1}^p\sup_{\theta\in\Theta}\bigl|\bigl(\partial_{\theta^{(j_1)}}\partial_{\theta^{(j_2)}}
    \partial_{\theta^{(j_3)}}{\bf{\Sigma}}(\theta)^{-1}\bigr)_{k_1k_2}\bigr|\\
    &\qquad\qquad\qquad\qquad\qquad\quad\times\bigl|(\Delta_{i}^n X)^{(k_1)}\bigr|\bigl|(\Delta_{i}^n X)^{(k_2)}\bigr|
    {\bf{1}}_{\{|\Delta_{i}^n X|\leq Dh_n^{\rho}\}}\\
    &\qquad+\sup_{\theta\in\Theta}\bigl|\partial_{\theta^{(j_1)}}
    \partial_{\theta^{(j_2)}}\partial_{\theta^{(j_3)}}\log\det {\bf{\Sigma}}(\theta)\bigr|\times\frac{1}{2n}\sum_{i=1}^n{\bf{1}}_{\{|\Delta_{i}^n X|\leq Dh_n^{\rho}\}} \\
    &\leq \frac{C}{nh_n}\sum_{i=1}^n\sum_{k_1=1}^p\sum_{k_2=1}^p\bigl|(\Delta_{i}^n X)^{(k_1)}\bigr|\bigl|(\Delta_{i}^n X)^{(k_2)}\bigr|
    {\bf{1}}_{\{|\Delta_{i}^n X|\leq Dh_n^{\rho}\}}+C.
\end{align*}
Consequently, it holds that
\begin{align*}
    &\quad\ {\bf{E}}\Biggl[\biggl(\sup_{\theta\in\Theta}\Bigl|\frac{1}{n}\partial_{\theta^{(j_1)}}\partial_{\theta^{(j_2)}}\partial_{\theta^{(j_3)}}{\bf{H}}_n(\theta)\Bigr|\biggr)^L\Biggr]\\
    &\leq C_L{\bf{E}}\Biggl[\biggl(\frac{1}{nh_n}\sum_{i=1}^n\sum_{k_1=1}^p\sum_{k_2=1}^p\bigl|(\Delta_{i}^n X)^{(k_1)}\bigr|\bigl|(\Delta_{i}^n X)^{(k_2)}\bigr|
    {\bf{1}}_{\{|\Delta_{i}^n X|\leq Dh_n^{\rho}\}}\biggr)^L\Biggr]+C_L\\
    &\leq C_L\sum_{k_1=1}^p\sum_{k_2=1}^p{\bf{E}}\Biggl[\biggl(\frac{1}{nh_n}\sum_{i=1}^n\bigl|(\Delta_{i}^n X)^{(k_1)}\bigr|\bigl|(\Delta_{i}^n X)^{(k_2)}\bigr|
    {\bf{1}}_{\{|\Delta_{i}^n X|\leq Dh_n^{\rho}\}}\biggr)^L\Biggr]+C_L\\
    &\leq C_L\sum_{k_1=1}^p\sum_{k_2=1}^p{\bf{E}}\biggl[\bigl|{\bf{M}}_{n,k_1,k_2}\bigr|^L\biggr]+C_L\sum_{k_1=1}^p\sum_{k_2=1}^p{\bf{E}}\biggl[\bigl|{\bf{R}}_{n,k_1,k_2}\bigr|^L\biggr]
    +C_L
\end{align*}
for all $L\geq 1$, where 
\begin{align*}
    {\bf{M}}_{n,k_1,k_2}&=\frac{1}{nh_n}\sum_{i=1}^n\Bigl\{\bigl|(\Delta_{i}^n X)^{(k_1)}\bigr|\bigl|(\Delta_{i}^n X)^{(k_2)}\bigr|
    {\bf{1}}_{\{|\Delta_{i}^n X|\leq Dh_n^{\rho}\}}\\
    &\qquad\qquad\quad -{\bf{E}}\Bigl[\bigl|(\Delta_{i}^n X)^{(k_1)}\bigr|\bigl|(\Delta_{i}^n X)^{(k_2)}\bigr|
    {\bf{1}}_{\{|\Delta_{i}^n X|\leq Dh_n^{\rho}\}}\big|\mathcal{F}^n_{i-1}\Bigr]\Bigr\}
\end{align*}
and
\begin{align*}
    {\bf{R}}_{n,k_1,k_2}&=\frac{1}{nh_n}\sum_{i=1}^n{\bf{E}}\Bigl[\bigl|(\Delta_{i}^n X)^{(k_1)}\bigr|\bigl|(\Delta_{i}^n X)^{(k_2)}\bigr|
    {\bf{1}}_{\{|\Delta_{i}^n X|\leq Dh_n^{\rho}\}}\big|\mathcal{F}^n_{i-1}\Bigr]
\end{align*}
for $k_1,k_2=1,\ldots,p$. To prove (\ref{suffH31}), it is enough to show 
\begin{align}
    \sup_{n\in\mathbb{N}}{\bf{E}}\biggl[\bigl|{\bf{M}}_{n,k_1,k_2}\bigr|^L\biggr]<\infty
    \label{M}
\end{align}
and 
\begin{align}
    \sup_{n\in\mathbb{N}}{\bf{E}}\biggl[\bigl|{\bf{R}}_{n,k_1,k_2}\bigr|^L\biggr]<\infty
    \label{R}
\end{align}
for all $L\geq 1$. First, we consider (\ref{M}). Set
\begin{align*}
    {\bf{N}}_{0,n,k_1,k_2}=0,\quad {\bf{N}}_{\ell,n,k_1,k_2}=\frac{1}{nh_n}\sum_{i=1}^{\ell} {\bf{L}}_{i,n,k_1,k_2}
\end{align*}
for $\ell=1,\ldots,n$, where
\begin{align*}
    {\bf{L}}_{i,n,k_1,k_2}&=\bigl|(\Delta_{i}^n X)^{(k_1)}\bigr|\bigl|(\Delta_{i}^n X)^{(k_2)}\bigr|
    {\bf{1}}_{\{|\Delta_{i}^n X|\leq Dh_n^{\rho}\}}\\
    &\qquad\qquad\qquad-{\bf{E}}\Bigl[\bigl|(\Delta_{i}^n X)^{(k_1)}\bigr|\bigl|(\Delta_{i}^n X)^{(k_2)}\bigr|
    {\bf{1}}_{\{|\Delta_{i}^n X|\leq Dh_n^{\rho}\}}\big|\mathcal{F}^n_{i-1}\Bigr].
\end{align*}
Note that the stochastic process
\begin{align*}
    \bigl\{{\bf{N}}_{\ell,n,k_1,k_2}\bigr\}_{\ell=0}^n
\end{align*}
is a discrete-time martingale with respect to the filtration $(\mathcal{F}_{\ell}^n)_{\ell=0}^n$, and that ${\bf{M}}_{n,k_1,k_2}$ is its terminal value:
\begin{align*}
    {\bf{M}}_{n,k_1,k_2}={\bf{N}}_{n,n,k_1,k_2}.
\end{align*}
Using the Burkholder inequality, we see
\begin{align*}
    {\bf{E}}\biggl[\bigl|{\bf{M}}_{n,k_1,k_2}\bigr|^L\biggr]
    &\leq C_L{\bf{E}}\biggl[\bigl\langle {\bf{N}}_{k_1,k_2} \bigr\rangle_n^{L/2}\biggr]\\
    &\leq\frac{C_L}{n^{L/2}h_n^L}{\bf{E}}\Biggl[\biggl|\frac{1}{n}\sum_{i=1}^n {\bf{L}}^2_{i,n,k_1,k_2}\biggr|^{L/2}\Biggr]\\
    &\leq \frac{C_L}{n^{L/2}h_n^L}\times\frac{1}{n}
    \sum_{i=1}^n {\bf{E}}\biggl[\bigl|{\bf{L}}_{i,n,k_1,k_2}\bigr|^L\biggr]
\end{align*}
for all $L\geq 2$, where 
\begin{align*}
    \bigl\langle {\bf{N}}_{k_1,k_2} \bigr\rangle_n&=\sum_{i=1}^n \bigl(
    {\bf{N}}_{i,n,k_1,k_2}-{\bf{N}}_{i-1,n,k_1,k_2}\bigr)^2=\frac{1}{n^2h_n^2}\sum_{i=1}^n {\bf{L}}_{i,n,k_1,k_2}^2.
\end{align*}
Furthermore, one gets
\begin{align*}
    {\bf{E}}\biggl[\bigl|{\bf{L}}_{i,n,k_1,k_2}\bigr|^L\biggr]
    &\leq C_L{\bf{E}}\biggl[\bigl|(\Delta_{i}^n X)^{(k_1)}\bigr|^L\bigl|(\Delta_{i}^n X)^{(k_2)}\bigr|^L
    {\bf{1}}_{\{|\Delta_{i}^n X|\leq Dh_n^{\rho}\}}\biggr]\\
    &\quad+C_L{\bf{E}}\biggl[\Bigl|{\bf{E}}\Bigl[\bigl|(\Delta_{i}^n X)^{(k_1)}\bigr|\bigl|(\Delta_{i}^n X)^{(k_2)}\bigr|
    {\bf{1}}_{\{|\Delta _{i}^nX|\leq Dh_n^{\rho}\}}\big|\mathcal{F}^n_{i-1}\Bigr]\Bigr|^L\biggr]\\
    &\leq C_L{\bf{E}}\biggl[\bigl|(\Delta_{i}^n X)^{(k_1)}\bigr|^L\bigl|(\Delta_{i}^n X)^{(k_2)}\bigr|^L
    {\bf{1}}_{\{|\Delta_{i}^n X|\leq Dh_n^{\rho}\}}\biggr]\\
    &\leq C_L{\bf{E}}\biggl[\bigl|\Delta_{i}^n X\bigr|^{2L}{\bf{1}}_{\{|\Delta_{i}^n X|\leq Dh_n^{\rho}\}}\biggr]\leq C_Lh_n^{2L\rho}
\end{align*}
for all $L\geq 1$. Hence, 
\begin{align*}
    {\bf{E}}\biggl[\bigl|{\bf{M}}_{n,k_1,k_2}\bigr|^L\biggr]
    \leq\frac{C_L}{n^{L/2}h_n^L}\times h_n^{2L\rho}
\end{align*}
for all $L\geq 2$. Since $\rho>1/4$, we have
\begin{align*}
    \frac{1}{n^{L/2}h_n^L}\times
    h_n^{2L\rho}=n^{(1/2-2\rho)L}\longrightarrow 0
\end{align*}
as $n\longrightarrow\infty$, which yields
\begin{align*}
    \sup_{n\in\mathbb{N}}{\bf{E}}\biggl[\bigl|{\bf{M}}_{n,k_1,k_2}\bigr|^L\biggr]<\infty
\end{align*}
for all $L\geq 2$. Therefore, (\ref{M}) holds. Next, we show (\ref{R}). As it follows from Lemma \ref{EX2abslemma} that 
\begin{align*}
    {\bf{R}}_{n,k_1,k_2}&=\frac{1}{nh_n}\sum_{i=1}^n{\bf{E}}\Bigl[\bigl|(\Delta_{i}^n X)^{(k_1)}\bigr|\bigl|(\Delta_{i}^n X)^{(k_2)}\bigr|
    {\bf{1}}_{\{|\Delta_{i}^n X|\leq Dh_n^{\rho}\}}\big|\mathcal{F}^n_{i-1}\Bigr]\\
    &=\frac{1}{n}\sum_{i=1}^n R_{i-1}(1,\xi,\delta,\varepsilon,\zeta)
\end{align*}
for a sufficiently large $n$, one has
\begin{align*}
    {\bf{E}}\biggl[\bigl|{\bf{R}}_{n,k_1,k_2}\bigr|^L\biggr]\leq \frac{1}{n}\sum_{i=1}^n 
    {\bf{E}}\biggl[\bigl|R_{i-1}(1,\xi,\delta,\varepsilon,\zeta)\bigr|^L\biggr]\leq C_L
\end{align*}
for a sufficiently large $n$ and for all $L\geq 1$. This implies
\begin{align*}
    \sup_{n\in\mathbb{N}}{\bf{E}}\biggl[\bigl|{\bf{R}}_{n,k_1,k_2}\bigr|^L\biggr]<\infty
\end{align*}
for all $L\geq 1$. Therefore, we obtain (\ref{R}), which completes the proof.
\end{proof}
\begin{lemma}[Propositions 2-4 in Kusano and Uchida \cite{Kusano(jump)}]\label{Elemma}
Suppose that $[{\bf{A1}}]$-$[{\bf{A4}}]$ hold. Then, 
\begin{align*}
    {\bf{E}}\Bigl[(\Delta_i^n X)^{(j_1)}(\Delta_i^n X)^{(j_2)}{\bf{1}}_{\{|\Delta_i^n X|\leq Dh_n^{\rho}\}}\big|\mathcal{F}_{i-1}^n\Bigr]=h_n {\bf{\Sigma}}(\theta_0)_{j_1j_2}
    +R_{i-1}(h_n^{2},\xi,\delta,\varepsilon,\zeta)
\end{align*}
and
\begin{align*}
    {\bf{E}}\Bigl[(\Delta_{i}^n X)^{(j_1)}(\Delta_{i}^n X)^{(j_2)}(\Delta_{i}^n X)^{(j_3)}(\Delta_{i}^n X)^{(j_4)}{\bf{1}}_{\{|\Delta_{i}^n X|\leq Dh_n^{\rho}\}}|\mathcal{F}^n_{i-1}\Bigr]=R_{i-1}(h_n^{2},\xi,\delta,\varepsilon,\zeta)
\end{align*}
for a sufficiently large $n$ and $j_1,j_2,j_3,j_4=1,\ldots,p$.
\end{lemma}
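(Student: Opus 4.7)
The plan is to invoke Propositions 2--4 in Kusano and Uchida \cite{Kusano(jump)}, which establish exactly these conditional moment expansions. Let me sketch the underlying strategy. First I would observe that $X_t$ is an affine combination of the latent jump-diffusions $\xi_{0,t}$, $\delta_{0,t}$, $\varepsilon_{0,t}$, $\zeta_{0,t}$ via the constant matrices ${\bf{\Lambda}}_{1,0}$, ${\bf{\Lambda}}_{2,0}$, ${\bf{\Gamma}}_0$, ${\bf{\Psi}}_0^{-1}$, so its quadratic variation over $(t_{i-1}^n,t_i^n]$ has instantaneous covariance exactly ${\bf{\Sigma}}_0$ from the Wiener parts plus a jump contribution. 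Decompose $\Delta_i^n X = \Delta_i^n X^{c} + \Delta_i^n X^{d}$, where $\Delta_i^n X^{c}$ bundles the drift and Wiener contributions and $\Delta_i^n X^{d}$ the compound-Poisson contributions.

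Second, I would use the choice $\rho\in[3/8,1/2)$ to show that the truncation indicator ${\bf{1}}_{\{|\Delta_i^n X|\leq Dh_n^{\rho}\}}$ asymptotically coincides with the indicator of no jump in $(t_{i-1}^n,t_i^n]$, up to remainders absorbed into $R_{i-1}(h_n^2,\xi,\delta,\varepsilon,\zeta)$. On the one hand, assumption {\bf{[A3]}} gives ${\bf{P}}(\text{at least one jump in }(t_{i-1}^n,t_i^n]|\mathcal{F}_{i-1}^n)=O(h_n)$, and {\bf{[A4]}} ensures $|\Delta_i^n X^{d}|\geq c|z|$ near the origin for jump amplitude $z$, so sufficiently large jumps exceed the threshold. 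On the other hand, polynomial Markov bounds applied to the continuous semimartingale part yield ${\bf{P}}(|\Delta_i^n X^{c}|>Dh_n^{\rho}/2|\mathcal{F}_{i-1}^n)=O(h_n^L)$ for any $L\geq 1$. After this substitution, the second-moment identity reduces to an It\^o-isometry computation for the Wiener integrals contained in $(\Delta_i^n X^{c})^{(j_1)}(\Delta_i^n X^{c})^{(j_2)}$, yielding the leading term $h_n{\bf{\Sigma}}(\theta_0)_{j_1j_2}$ plus an $O(h_n^2)$ drift-squared error; the fourth-moment bound $O(h_n^2)$ follows because each continuous increment has size of order $h_n^{1/2}$.

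The main obstacle is to confirm that the remainder belongs precisely to the class $R_{i-1}(h_n^2,\xi,\delta,\varepsilon,\zeta)$, i.e.\ of polynomial growth in the latent states $\xi_{0,t_{i-1}^n},\delta_{0,t_{i-1}^n},\varepsilon_{0,t_{i-1}^n},\zeta_{0,t_{i-1}^n}$ with the correct $h_n$-order and a uniform-in-$(i,n)$ constant. This requires careful bookkeeping of the It\^o--Taylor remainders, using the Lipschitz condition {\bf{[A1]}} and the polynomial-growth smoothness {\bf{[A2]}} to transfer dependencies on $(\xi_{0,s},\delta_{0,s},\varepsilon_{0,s},\zeta_{0,s})_{s\in[t_{i-1}^n,t_i^n]}$ back to the left endpoint $t_{i-1}^n$, together with the moment estimates under {\bf{[A3]}} to control the jump contributions. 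Since Propositions 2--4 in Kusano and Uchida \cite{Kusano(jump)} carry out precisely this bookkeeping, I would simply cite those results rather than reproduce the computations.
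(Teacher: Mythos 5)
The paper gives no proof of this lemma: it is stated as a direct citation of Propositions 2--4 in Kusano and Uchida \cite{Kusano(jump)}, which is exactly what you do. Your accompanying sketch (event decomposition by jump counts, comparison with the continuous part $X^c$, and absorption of the jump-event contributions via the bound $|\Delta_i^n X|\leq Dh_n^{\rho}$ times the small conditional probabilities) is consistent with the analogous argument the paper does write out for Lemma \ref{EX2abslemma} in Appendix \ref{proofEX2abslemma}, so the approaches match.
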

\begin{lemma}\label{Ylemma}
Suppose that $[{\bf{A1}}]$-$[{\bf{A4}}]$ hold. Then, for all $L>0$ and $\varepsilon\in(0,1/4]$,
\begin{align*}
    \sup_{n\in\mathbb{N}}{\bf{E}}\Biggl[\biggl(n^{\varepsilon}\sup_{\theta\in\Theta}\bigl|{\bf{Y}}_n(\theta)-{\bf{Y}}(\theta)\bigr|\biggr)^L\Biggr]<\infty.
\end{align*}
\end{lemma}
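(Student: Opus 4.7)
The plan is to exploit the bilinear structure of ${\bf{H}}_n$ to factor out the $\theta$-dependence from the sample statistics, reducing the uniform-in-$\theta$ problem to two data-only moment bounds. Setting ${\bf{Y}}_n(\theta):=n^{-1}\bigl\{{\bf{H}}_n(\theta)-{\bf{H}}_n(\theta_0)\bigr\}$ and using the identity $(\Delta_i^n X)^\top A (\Delta_i^n X)=\tr\bigl(A(\Delta_i^n X)(\Delta_i^n X)^\top\bigr)$, I would introduce
\[
\hat{\bf{\Sigma}}_n := \frac{1}{nh_n}\sum_{i=1}^n (\Delta_i^n X)(\Delta_i^n X)^\top {\bf{1}}_{\{|\Delta_i^n X|\leq Dh_n^\rho\}}, \qquad \hat c_n := \frac{1}{n}\sum_{i=1}^n {\bf{1}}_{\{|\Delta_i^n X|\leq Dh_n^\rho\}},
\]
and read off the algebraic identity
\[
{\bf{Y}}_n(\theta)-{\bf{Y}}(\theta) = -\tfrac{1}{2}\tr\bigl\{\bigl({\bf{\Sigma}}(\theta)^{-1}-{\bf{\Sigma}}(\theta_0)^{-1}\bigr)(\hat{\bf{\Sigma}}_n - {\bf{\Sigma}}_0)\bigr\} - \tfrac{1}{2}\bigl(\log\det{\bf{\Sigma}}(\theta)-\log\det{\bf{\Sigma}}(\theta_0)\bigr)(\hat c_n - 1).
\]
Because $\bar\Theta$ is compact and both $\theta\mapsto{\bf{\Sigma}}(\theta)^{-1}$ and $\theta\mapsto\log\det{\bf{\Sigma}}(\theta)$ are continuous on it, pulling the supremum inside yields $\sup_{\theta\in\Theta}|{\bf{Y}}_n(\theta)-{\bf{Y}}(\theta)|\leq C|\hat{\bf{\Sigma}}_n-{\bf{\Sigma}}_0|+C|\hat c_n-1|$, and $\theta$ is eliminated from the remaining analysis.

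It then suffices to establish, for every $L>0$ and $\varepsilon\in(0,1/4]$, the bound $\sup_n {\bf{E}}\bigl[|n^\varepsilon(\hat{\bf{\Sigma}}_n-{\bf{\Sigma}}_0)|^L\bigr]<\infty$ together with its analogue for $\hat c_n-1$. For both quantities, I would reuse the martingale decomposition used in the proof of Lemma \ref{H3lemma}: each summand is written as its $\mathcal{F}_{i-1}^n$-conditional expectation plus a centered martingale increment. The residual is controlled by Lemma \ref{Elemma}, which yields $(nh_n)^{-1}\sum_i R_{i-1}(h_n^2,\xi,\delta,\varepsilon,\zeta) = O(h_n) = O(n^{-1})$ in every $L^L$-norm. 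For the centered (martingale) parts, I would apply the Burkholder inequality together with the deterministic truncation bound $|\Delta_i^n X|{\bf{1}}_{\{|\Delta_i^n X|\leq Dh_n^\rho\}}\leq Dh_n^\rho$, following exactly the chain of estimates in the proof of Lemma \ref{H3lemma}, which gives an $L^L$-bound of order $C_L n^{L(1/2-2\rho)}$. The standing assumption $\rho\geq 3/8$ gives $2\rho-1/2\geq 1/4$, which is exactly the rate required at $\varepsilon=1/4$, and \emph{a fortiori} for all smaller $\varepsilon$. For $1-\hat c_n$ the decomposition is identical; since the conditional probability ${\bf{P}}\bigl(|\Delta_i^n X|>Dh_n^\rho\bigm|\mathcal{F}_{i-1}^n\bigr)$ is $O(h_n)$ (dominated by the probability of a Poisson jump in $(t_{i-1}^n,t_i^n]$ plus an exponentially small tail from the diffusive part), the associated moment bound is of order $n^{-1}$, well within the required rate.

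The main obstacle is achieving the slow rate $n^{-(2\rho-1/2)}$ in the martingale part exactly at the boundary $\rho=3/8$, which is what forces the restriction $\varepsilon\leq 1/4$ in the conclusion. This is where the deterministic truncation bound $h_n^{2\rho}$ is essential: a naive stochastic bound based on $|\Delta_i^n X|^2$ alone would handle the pure-diffusion contribution at rate $n^{-1/2}$ but fail to cover intervals containing jumps. Once the sharp martingale estimate is in place, uniformity in $\theta$ comes for free from the factorization displayed above, avoiding any more elaborate Sobolev-embedding or bracketing argument on the parameter space.
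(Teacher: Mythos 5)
Your proposal is correct and follows essentially the same route as the paper: the identity you display for ${\bf{Y}}_n(\theta)-{\bf{Y}}(\theta)$ is exactly the paper's decomposition (written in trace form rather than entrywise), the $\theta$-dependence is removed by compactness of $\bar\Theta$ in both arguments, and the remaining moment bounds on $\hat{\bf{\Sigma}}_n-{\bf{\Sigma}}_0$ and $\hat c_n-1$ are obtained, as in the paper, by splitting into a martingale part controlled via the Burkholder inequality with the truncation bound $h_n^{2\rho}$ (yielding the exponent $\varepsilon+1/2-2\rho\leq 0$ at $\rho=3/8$, $\varepsilon=1/4$) and a conditional-expectation bias of order $h_n$ controlled by Lemma \ref{Elemma} and the conditional probability estimate for the indicator. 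No gaps.
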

\begin{proof}
Note that
\begin{align*}
    {\bf{Y}}_n(\theta)-{\bf{Y}}(\theta)
    &=-\frac{1}{2nh_n}\sum_{i=1}^n\sum_{k_1=1}^p\sum_{k_2=1}^p\bigl({\bf{\Sigma}}(\theta)^{-1}-{\bf{\Sigma}}(\theta_0)^{-1}\bigr)_{k_1k_2}\\
    &\qquad\qquad\qquad\times(\Delta_{i}^n X)^{(k_1)}(\Delta_{i}^n X)^{(k_2)}{\bf{1}}_{\{|\Delta_{i}^n X|\leq Dh_n^{\rho}\}}\\
    &\qquad-\frac{1}{2}\log\frac{\det {\bf{\Sigma}}(\theta)}{\det {\bf{\Sigma}}(\theta_0)}\biggl\{\frac{1}{n}\sum_{i=1}^n{\bf{1}}_{\{|\Delta_{i}^n X|\leq Dh_n^{\rho}\}}-1\biggr\}\\
    &\qquad+\frac{1}{2}\tr\Bigl(\bigl({\bf{\Sigma}}(\theta)^{-1}-{\bf{\Sigma}}(\theta_0)^{-1}\bigr){\bf{\Sigma}}(\theta_0)\Bigr)\\
    &=-\frac{1}{2nh_n}\sum_{i=1}^n\sum_{k_1=1}^p\sum_{k_2=1}^p\bigl({\bf{\Sigma}}(\theta)^{-1}-{\bf{\Sigma}}(\theta_0)^{-1}\bigr)_{k_1k_2}\\
    &\qquad\times\Bigl\{(\Delta_{i}^n X)^{(k_1)}(\Delta_{i}^n X)^{(k_2)}{\bf{1}}_{\{|\Delta_{i}^n X|\leq Dh_n^{\rho}\}}-h_n{\bf{\Sigma}}(\theta_0)_{k_1k_2}\Bigr\}\\
    &\qquad-\frac{1}{2}\log\frac{\det {\bf{\Sigma}}(\theta)}{\det {\bf{\Sigma}}(\theta_0)}\biggl\{\frac{1}{n}\sum_{i=1}^n{\bf{1}}_{\{|\Delta_{i}^n X|\leq Dh_n^{\rho}\}}-1\biggr\}.
\end{align*}
A decomposition is given by
\begin{align*}
    {\bf{Y}}_n(\theta)-{\bf{Y}}(\theta)
    &=-\frac{1}{2}\sum_{k_1=1}^p\sum_{k_2=1}^p\bigl({\bf{\Sigma}}(\theta)^{-1}-{\bf{\Sigma}}(\theta_0)^{-1}\bigr)_{k_1k_2}{\bf{M}}^{\dagger}_{1,n,k_1,k_2}\\
    &\qquad-\frac{1}{2}\sum_{k_1=1}^p\sum_{k_2=1}^p\bigl({\bf{\Sigma}}(\theta)^{-1}-{\bf{\Sigma}}(\theta_0)^{-1}\bigr)_{k_1k_2}{\bf{R}}^{\dagger}_{1,n,k_1,k_2}\\
    &\qquad\qquad\quad-\frac{1}{2}\log\frac{\det {\bf{\Sigma}}(\theta)}{\det {\bf{\Sigma}}(\theta_0)}{\bf{M}}^{\dagger}_{2,n}-\frac{1}{2}\log\frac{\det {\bf{\Sigma}}(\theta)}{\det {\bf{\Sigma}}(\theta_0)}{\bf{R}}^{\dagger}_{2,n},
\end{align*}
where 
\begin{align*}
    {\bf{M}}^{\dagger}_{1,n,k_1,k_2}&=\frac{1}{nh_n}\sum_{i=1}^n\biggl\{(\Delta_{i}^n X)^{(k_1)}(\Delta_{i}^n X)^{(k_2)}{\bf{1}}_{\{|\Delta_{i}^n X|\leq Dh_n^{\rho}\}}\\
    &\qquad\qquad\qquad-{\bf{E}}\Bigl[(\Delta_i^n X)^{(k_1)}(\Delta_i^n X)^{(k_2)}{\bf{1}}_{\{|\Delta_i^n X|\leq Dh_n^{\rho}\}}\big|\mathcal{F}_{i-1}^n\Bigr]\biggr\},\\
    {\bf{R}}^{\dagger}_{1,n,k_1,k_2}&=\frac{1}{nh_n}\sum_{i=1}^n\biggl\{{\bf{E}}\Bigl[(\Delta_i^n X)^{(k_1)}(\Delta_i^n X)^{(k_2)}{\bf{1}}_{\{|\Delta_i^n X|\leq Dh_n^{\rho}\}}\big|\mathcal{F}_{i-1}^n\Bigr]-h_n{\bf{\Sigma}}(\theta_0)_{k_1k_2}\biggr\}
\end{align*}
and
\begin{align*}
    {\bf{M}}^{\dagger}_{2,n}&=\frac{1}{n}\sum_{i=1}^n\biggl\{{\bf{1}}_{\{|\Delta_{i}^n X|\leq Dh_n^{\rho}\}}-{\bf{E}}\Bigl[{\bf{1}}_{\{|\Delta_i^n X|\leq Dh_n^{\rho}\}}\big|\mathcal{F}_{i-1}^n\Bigr]\biggr\},\\
    {\bf{R}}^{\dagger}_{2,n}&=\frac{1}{n}\sum_{i=1}^n\biggl\{{\bf{E}}\Bigl[{\bf{1}}_{\{|\Delta_i^n X|\leq Dh_n^{\rho}\}}\big|\mathcal{F}_{i-1}^n\Bigr]-1\biggr\}.
\end{align*}
It follows that
\begin{align*}
    \sup_{\theta\in\Theta}\bigl|{\bf{Y}}_n(\theta)-{\bf{Y}}(\theta)\bigr|
    &\leq \frac{1}{2} \sum_{k_1=1}^p\sum_{k_2=1}^p\sup_{\theta\in\Theta}
    \bigl|\bigl({\bf{\Sigma}}(\theta)^{-1}-{\bf{\Sigma}}(\theta_0)^{-1}\bigr)_{k_1k_2}\bigr|\bigl|{\bf{M}}^{\dagger}_{1,n,k_1,k_2}\bigr|\\
    &\quad+\frac{1}{2}\sum_{k_1=1}^p\sum_{k_2=1}^p\sup_{\theta\in\Theta}
    \bigl|\bigl({\bf{\Sigma}}(\theta)^{-1}-{\bf{\Sigma}}(\theta_0)^{-1}\bigr)_{k_1k_2}\bigr|\bigl|{\bf{R}}^{\dagger}_{1,n,k_1,k_2}\bigr|\\
    &\quad+\frac{1}{2}\sup_{\theta\in\Theta}\biggl|\log\frac{\det {\bf{\Sigma}}(\theta)}{\det {\bf{\Sigma}}(\theta_0)}\biggr|\bigl|{\bf{M}}^{\dagger}_{2,n}\bigr|+
    \frac{1}{2}\sup_{\theta\in\Theta}\biggl|\log\frac{\det {\bf{\Sigma}}(\theta)}{\det {\bf{\Sigma}}(\theta_0)}\biggr|\bigl|{\bf{R}}^{\dagger}_{2,n}\bigr|\\
    &\leq C\sum_{k_1=1}^p\sum_{k_2=1}^p\bigl|{\bf{M}}^{\dagger}_{1,n,k_1,k_2}\bigr|\\
    &\quad+C\sum_{k_1=1}^p\sum_{k_2=1}^p\bigl|{\bf{R}}^{\dagger}_{1,n,k_1,k_2}\bigr|+C\bigl|{\bf{M}}^{\dagger}_{2,n}\bigr|+C\bigl|{\bf{R}}^{\dagger}_{2,n}\bigr|,
\end{align*}
which yields
\begin{align*}
    {\bf{E}}\Biggl[\biggl(n^{\varepsilon}\sup_{\theta\in\Theta}\bigl|{\bf{Y}}_n(\theta)-{\bf{Y}}(\theta)\bigr|\biggr)^L\Biggr]
    &\leq C_L\sum_{k_1=1}^p\sum_{k_2=1}^p{\bf{E}}\Biggl[\biggl(n^{\varepsilon}\bigl|{\bf{M}}^{\dagger}_{1,n,k_1,k_2}\bigr|\biggr)^L\Biggr]\\
    &\quad+C_L\sum_{k_1=1}^p\sum_{k_2=1}^p{\bf{E}}\Biggl[\biggl(n^{\varepsilon}\bigl|{\bf{R}}^{\dagger}_{1,n,k_1,k_2}\bigr|\biggr)^L\Biggr]\\
    &\quad+C_L{\bf{E}}\Biggl[\biggl(n^{\varepsilon}\bigl|{\bf{M}}^{\dagger}_{2,n}\bigr|\biggr)^L\Biggr]+C_L{\bf{E}}\Biggl[\biggl(n^{\varepsilon}\bigl|{\bf{R}}^{\dagger}_{2,n}\bigr|\biggr)^L\Biggr]
\end{align*}
for all $L\geq 1$. Hence, it is sufficient to prove
\begin{align}
    &\sum_{k_1=1}^p\sum_{k_2=1}^p\sup_{n\in\mathbb{N}}{\bf{E}}\Biggl[\biggl(n^{\varepsilon}\bigl|{\bf{M}}^{\dagger}_{1,n,k_1,k_2}\bigr|\biggr)^L\Biggr]<\infty,\label{M1d}\\
    &\sum_{k_1=1}^p\sum_{k_2=1}^p\sup_{n\in\mathbb{N}}{\bf{E}}\Biggl[\biggl(n^{\varepsilon}\bigl|{\bf{R}}^{\dagger}_{1,n,k_1,k_2}\bigr|\biggr)^L\Biggr]<\infty,\label{R1d}\\
    &\qquad\quad\sup_{n\in\mathbb{N}}{\bf{E}}\Biggl[\biggl(n^{\varepsilon}\bigl|{\bf{M}}^{\dagger}_{2,n}\bigr|\biggr)^L\Biggr]<\infty \label{M2d}
\end{align}
and
\begin{align}
    \sup_{n\in\mathbb{N}}{\bf{E}}\Biggl[\biggl(n^{\varepsilon}\bigl|{\bf{R}}^{\dagger}_{2,n}\bigr|\biggr)^L\Biggr]
    <\infty \label{R2d}
\end{align}
for all $L\geq 1$. First, we show (\ref{M1d}). Note that
\begin{align*}
    {\bf{M}}^{\dagger}_{1,n,k_1,k_2}&=\frac{1}{nh_n}\sum_{i=1}^n{\bf{L}}^{\dagger}_{1,i,n,k_1,k_2},
\end{align*}
where
\begin{align*}
    {\bf{L}}^{\dagger}_{1,i,n,k_1,k_2}&=(\Delta_{i}^n X)^{(k_1)}(\Delta_{i}^n X)^{(k_2)}{\bf{1}}_{\{|\Delta_{i}^nX|\leq Dh_n^{\rho}\}}\\
    &\qquad\qquad-{\bf{E}}\Bigl[(\Delta_i^n X)^{(k_1)}(\Delta_i^n X)^{(k_2)}{\bf{1}}_{\{|\Delta_i^n X|\leq Dh_n^{\rho}\}}\big|\mathcal{F}_{i-1}^n\Bigr].
\end{align*}
In an analogous manner to the proof of Lemma \ref{H3lemma}, it is shown that
\begin{align*}
    {\bf{E}}\biggl[\bigl|{\bf{M}}^{\dagger}_{1,n,k_1,k_2}\bigr|^L\biggr]
    &\leq \frac{C_L}{n^{L/2}h_n^L}\times\frac{1}{n}
    \sum_{i=1}^n {\bf{E}}\biggl[\bigl|{\bf{L}}^{\dagger}_{1,i,n,k_1,k_2}\bigr|^L\biggr]
\end{align*}
and
\begin{align*}
    {\bf{E}}\biggl[\bigl|{\bf{L}}^{\dagger}_{1,i,n,k_1,k_2}\bigr|^L\biggr]\leq C_Lh_n^{2L\rho}
\end{align*}
for all $L\geq 2$, so that
\begin{align*}
    {\bf{E}}\Biggl[\biggl(n^{\varepsilon}\bigl|{\bf{M}}^{\dagger}_{1,n,k_1,k_2}\bigr|\biggr)^L\Biggr]\leq C_Ln^{\varepsilon L}\times \frac{h_n^{2L\rho}}{n^{L/2} h_n^L}
\end{align*}
for all $L\geq 2$. Since $\rho\in[3/8,1/2)$ and $\varepsilon\in(0,1/4]$,
\begin{align*}
    n^{\varepsilon L}\times \frac{h_n^{2L\rho}}{n^{L/2} h_n^L}=n^{(\varepsilon+1/2-2\rho)L}
\end{align*}
and
\begin{align*}
    \varepsilon+\frac{1}{2}-2\rho\leq \frac{1}{4}+\frac{1}{2}-\frac{3}{4}=0,
\end{align*}
which implies
\begin{align*}
    \sup_{n\in\mathbb{N}}{\bf{E}}\Biggl[\biggl(n^{\varepsilon}\bigl|{\bf{M}}^{\dagger}_{1,n,k_1,k_2}\bigr|\biggr)^L\Biggr]<\infty
\end{align*}
for all $L\geq 2$. Consequently, we obtain (\ref{M1d}). From Lemma \ref{Elemma}, one has
\begin{align*}
    {\bf{R}}^{\dagger}_{1,n,k_1,k_2}
    &=\frac{1}{nh_n}\sum_{i=1}^n R_{i-1}(h_n^2,\xi,\delta,\varepsilon,\zeta)
\end{align*}
for a sufficiently large $n$, so that
\begin{align*}
    {\bf{E}}\Biggl[\biggl(n^{\varepsilon}\bigl|{\bf{R}}^{\dagger}_{1,n,k_1,k_2}\bigr|\biggr)^L\Biggr]&=n^{\varepsilon L}h_n^L
    {\bf{E}}\Biggl[\biggl|\frac{1}{n}\sum_{i=1}^n R_{i-1}(1,\xi,\delta,\varepsilon,\zeta)\biggr|^L\Biggr]\\
    &\leq n^{\varepsilon L}h_n^L\times\frac{1}{n}\sum_{i=1}^n{\bf{E}}\biggl[\bigl|R_{i-1}(1,\xi,\delta,\varepsilon,\zeta)\bigr|^L\biggr]\\
    &\leq C_Ln^{\varepsilon L}h_n^L
\end{align*}
for a sufficiently large $n$ and all $L\geq 1$. Using the fact that
\begin{align*}
    n^{\varepsilon L}h_n^L=n^{(\varepsilon-1)L}\longrightarrow 0
\end{align*}
as $n\longrightarrow\infty$, one gets (\ref{R1d}). Set
\begin{align*}
    {\bf{L}}^{\dagger}_{2,i,n}={\bf{1}}_{\{|\Delta_{i}^n X|\leq Dh_n^{\rho}\}}-{\bf{E}}\Bigl[{\bf{1}}_{\{|\Delta_{i}^n X|\leq Dh_n^{\rho}\}}\big|\mathcal{F}^n_{i-1}\Bigr].
\end{align*}
It holds that
\begin{align*}
    {\bf{M}}^{\dagger}_{2,n}=\frac{1}{n}\sum_{i=1}^n{\bf{L}}^{\dagger}_{2,i,n}.
\end{align*}
In a similar way to the proof of Lemma \ref{H3lemma}, we can prove that
\begin{align*}
    {\bf{E}}\biggl[\bigl|{\bf{M}}^{\dagger}_{2,n}\bigr|^L\biggr]
    &\leq \frac{C_L}{n^{L/2}}\times\frac{1}{n}
    \sum_{i=1}^n {\bf{E}}\biggl[\bigl|{\bf{L}}^{\dagger}_{2,i,n}\bigr|^L\biggr]
\end{align*}
and
\begin{align*}
    {\bf{E}}\biggl[\bigl|{\bf{L}}^{\dagger}_{2,i,n}\bigr|^L\biggr]&\leq
    C_L{\bf{E}}\biggl[\bigl|
    {\bf{1}}_{\{|\Delta_{i}^n X|\leq Dh_n^{\rho}\}}\bigr|^{L}\biggr]+C_L{\bf{E}}\biggl[\Bigl|{\bf{E}}\Bigl[{\bf{1}}_{\{|\Delta_{i}^n X|\leq Dh_n^{\rho}\}}\big|\mathcal{F}^n_{i-1}\Bigr]\Bigr|^{L}\biggr]\\
    &\leq C_L{\bf{E}}\biggl[\bigl|
    {\bf{1}}_{\{|\Delta_{i}^n X|\leq Dh_n^{\rho}\}}\bigr|^{L}\biggr]\leq C_L
\end{align*}
for all $L\geq 2$, which yields
\begin{align*}
    {\bf{E}}\biggl[\bigl|n^{\varepsilon}{\bf{M}}^{\dagger}_{2,n}\bigr|^L\biggr]\leq \frac{C_Ln^{\varepsilon L}}{n^{L/2}}
\end{align*}
for all $L\geq 2$. Noting that $\varepsilon\in(0,1/4]$, we have
\begin{align*}
    \frac{n^{\varepsilon L}}{n^{L/2}}=n^{(\varepsilon-1/2)L}\longrightarrow 0
\end{align*}
as $n\longrightarrow\infty$. Hence, (\ref{M2d}) holds. As it follows from the proof of Proposition 1 in Kusano and Uchida \cite{Kusano(jump)} that
\begin{align*}
    {\bf{P}}\Bigl(|\Delta_{i}^n X|\leq Dh_n^{\rho}\big|\mathcal{F}_{i-1}^n\Bigr)
    &=\bar{R}_{i-1}(h_n,\xi,\delta,\varepsilon,\zeta)
\end{align*}
for a sufficiently large $n$,
\begin{align*}
    \left| {\bf{E}}\Bigl[{\bf{1}}_{\{|\Delta_{i}^n X|\leq Dh_n^{\rho}\}}\big|\mathcal{F}_{i-1}^n\Bigr]-1 \right|
    &= \left| {\bf{P}}\Bigl(|\Delta_{i}^n X|\leq Dh_n^{\rho}\big|\mathcal{F}_{i-1}^n\Bigr)-1 \right| \\
    &=R_{i-1}(h_n,\xi,\delta,\varepsilon,\zeta)
\end{align*}
for a sufficiently large $n$. Therefore, one gets
\begin{align*}
    {\bf{E}}\biggl[\bigl|n^{\varepsilon}{\bf{R}}^{\dagger}_{2,n}\bigr|^L\biggr]
    &\leq n^{\varepsilon L}{\bf{E}}\Biggl[\biggl( \frac{1}{n}\sum_{i=1}^n
\left| {\bf{E}}\Bigl[{\bf{1}}_{\{|\Delta_{i}^n X|\leq Dh_n^{\rho}\}}\big|\mathcal{F}_{i-1}^n\Bigr]-1 \right| \biggr)^L\Biggr]\\
    &\leq n^{\varepsilon L}\times\frac{1}{n}\sum_{i=1}^n {\bf{E}}\biggl[\bigl|R_{i-1}(h_n,\xi,\delta,\varepsilon,\zeta)\bigr|^L\biggr]\\
    &\leq C_L n^{\varepsilon L}h_n^L=C_Ln^{(\varepsilon-1)L}
\end{align*}
for a sufficiently large $n$ and all $L\geq 1$, which implies (\ref{R2d}). This completes the proof.
\end{proof}
\begin{lemma}\label{H2lemma}
Suppose that $[{\bf{A1}}]$-$[{\bf{A4}}]$ hold. Then, for all $L>0$ and $\varepsilon\in(0,1/4]$,
\begin{align*}
     \sup_{n\in\mathbb{N}}{\bf{E}}\Biggl[\biggl(n^{\varepsilon}\biggl|\frac{1}{n}\partial^2_{\theta}{\bf{H}}_n(\theta_0)+{\bf{I}}(\theta_0)\biggr|\biggr)^L\Biggr]<\infty.
\end{align*}
\end{lemma}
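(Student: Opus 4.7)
The plan is to reduce the bound to the martingale-plus-residual estimates already established inside the proof of Lemma \ref{Ylemma}, by identifying $-{\bf{I}}(\theta_0)$ as the conditional-expectation limit of $\tfrac{1}{n}\partial^2_\theta {\bf{H}}_n(\theta_0)$. First, by direct differentiation,
\begin{align*}
    \frac{1}{n}\partial_{\theta^{(j_1)}}\partial_{\theta^{(j_2)}}{\bf{H}}_n(\theta_0)
    &=-\frac{1}{2nh_n}\sum_{i=1}^n\sum_{k_1,k_2=1}^{p}A_{k_1 k_2}^{j_1 j_2}(\Delta_i^n X)^{(k_1)}(\Delta_i^n X)^{(k_2)}{\bf{1}}_{\{|\Delta_i^n X|\leq D h_n^{\rho}\}}\\
    &\qquad-\frac{B^{j_1 j_2}}{2n}\sum_{i=1}^n{\bf{1}}_{\{|\Delta_i^n X|\leq D h_n^{\rho}\}},
\end{align*}
where $A_{k_1 k_2}^{j_1 j_2}=\bigl(\partial_{\theta^{(j_1)}}\partial_{\theta^{(j_2)}}{\bf{\Sigma}}(\theta)^{-1}\bigr)_{k_1 k_2}\big|_{\theta=\theta_0}$ and $B^{j_1 j_2}=\partial_{\theta^{(j_1)}}\partial_{\theta^{(j_2)}}\log\det{\bf{\Sigma}}(\theta)\big|_{\theta=\theta_0}$.

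Next, I would verify the purely algebraic identity
\begin{align*}
    {\bf{I}}(\theta_0)_{j_1 j_2}=-\frac{1}{2}\sum_{k_1,k_2=1}^{p}A_{k_1 k_2}^{j_1 j_2}{\bf{\Sigma}}(\theta_0)_{k_1 k_2}-\frac{1}{2}B^{j_1 j_2},
\end{align*}
which follows from $\partial{\bf{\Sigma}}^{-1}=-{\bf{\Sigma}}^{-1}(\partial{\bf{\Sigma}}){\bf{\Sigma}}^{-1}$, $\partial\log\det{\bf{\Sigma}}=\tr({\bf{\Sigma}}^{-1}\partial{\bf{\Sigma}})$, and the duplication-matrix identity ${\bf{W}}(\theta_0)^{-1}=\tfrac{1}{2}\mathbb{D}_p^{\top}({\bf{\Sigma}}(\theta_0)^{-1}\otimes{\bf{\Sigma}}(\theta_0)^{-1})\mathbb{D}_p$. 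Subtracting the two displays, each $(j_1,j_2)$-entry of $\tfrac{1}{n}\partial_\theta^2 {\bf{H}}_n(\theta_0)+{\bf{I}}(\theta_0)$ reduces to
\begin{align*}
    -\frac{1}{2}\sum_{k_1,k_2=1}^{p}A_{k_1 k_2}^{j_1 j_2}\bigl({\bf{M}}^{\dagger}_{1,n,k_1,k_2}+{\bf{R}}^{\dagger}_{1,n,k_1,k_2}\bigr)-\frac{B^{j_1 j_2}}{2}\bigl({\bf{M}}^{\dagger}_{2,n}+{\bf{R}}^{\dagger}_{2,n}\bigr),
\end{align*}
where ${\bf{M}}^{\dagger}_{1,n,k_1,k_2}$, ${\bf{R}}^{\dagger}_{1,n,k_1,k_2}$, ${\bf{M}}^{\dagger}_{2,n}$ and ${\bf{R}}^{\dagger}_{2,n}$ are exactly the four objects introduced in the proof of Lemma \ref{Ylemma}.

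From here the probabilistic content is already in hand: since $\Theta$ is bounded and ${\bf{\Sigma}}(\cdot)$ is smooth, the coefficients $A_{k_1 k_2}^{j_1 j_2}$ and $B^{j_1 j_2}$ are constants, so applying Minkowski's inequality and invoking (\ref{M1d})--(\ref{R2d}) bounds ${\bf{E}}\bigl[(n^{\varepsilon}|\tfrac{1}{n}\partial^2_\theta {\bf{H}}_n(\theta_0)+{\bf{I}}(\theta_0)|)^L\bigr]$ by a finite sum of quantities each of which is uniformly bounded in $n$ for every $L\geq 1$ and every $\varepsilon\in(0,1/4]$. The main obstacle is verifying the algebraic identity for ${\bf{I}}(\theta_0)_{j_1 j_2}$ cleanly; once this identity pins down the correct centering, no new martingale estimates or moment bounds for the increments of $X$ beyond those already derived in Lemmas \ref{EX2abslemma}--\ref{Ylemma} are required.
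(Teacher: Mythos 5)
Your proposal follows essentially the same route as the paper: expand $\tfrac{1}{n}\partial_{\theta^{(j_1)}}\partial_{\theta^{(j_2)}}{\bf{H}}_n(\theta_0)$, recenter it using the same four quantities ${\bf{M}}^{\dagger}_{1,n,k_1,k_2}$, ${\bf{R}}^{\dagger}_{1,n,k_1,k_2}$, ${\bf{M}}^{\dagger}_{2,n}$, ${\bf{R}}^{\dagger}_{2,n}$ from the proof of Lemma \ref{Ylemma}, and invoke (\ref{M1d})--(\ref{R2d}). However, the algebraic identity you state for the centering has the wrong sign. Since the conditional-expectation limit of $\tfrac{1}{n}\partial_{\theta^{(j_1)}}\partial_{\theta^{(j_2)}}{\bf{H}}_n(\theta_0)$ is $-\tfrac{1}{2}\sum_{k_1,k_2}A^{j_1j_2}_{k_1k_2}{\bf{\Sigma}}(\theta_0)_{k_1k_2}-\tfrac{1}{2}B^{j_1j_2}$ and this limit equals $-{\bf{I}}(\theta_0)_{j_1j_2}$ (as you yourself note), the correct identity is
\begin{align*}
    {\bf{I}}(\theta_0)_{j_1 j_2}=+\frac{1}{2}\sum_{k_1,k_2=1}^{p}A_{k_1 k_2}^{j_1 j_2}{\bf{\Sigma}}(\theta_0)_{k_1 k_2}+\frac{1}{2}B^{j_1 j_2},
\end{align*}
not the version with minus signs that you wrote (a one-dimensional check with ${\bf{\Sigma}}(\theta)=\theta$ gives $A=2\theta_0^{-3}$, $B=-\theta_0^{-2}$, ${\bf{I}}(\theta_0)=\tfrac{1}{2}\theta_0^{-2}$, confirming the plus signs). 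With your signs the constant terms would not cancel upon subtraction and an uncancelled $-2\,{\bf{I}}(\theta_0)_{j_1j_2}$ would remain, so the displayed reduction to the four martingale/residual terms would fail. That said, the final decomposition you write down is in fact the correct one (it is exactly the paper's), so once the sign in the identity is fixed the argument closes as you describe: the coefficients $A^{j_1j_2}_{k_1k_2}$ and $B^{j_1j_2}$ are constants, and (\ref{M1d})--(\ref{R2d}) give the uniform $L$-th moment bound for every $\varepsilon\in(0,1/4]$ (with small $L$ handled by Jensen's inequality). No new estimates beyond Lemma \ref{Ylemma} are needed, exactly as you claim.
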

\begin{proof}
It is sufficient to prove
\begin{align}
    \sup_{n\in\mathbb{N}}{\bf{E}}\Biggl[\biggl(n^{\varepsilon}\biggl|\frac{1}{n}\partial_{\theta^{(j_1)}}\partial_{\theta^{(j_2)}}{\bf{H}}_n(\theta_0)+{\bf{I}}(\theta_0)_{j_1j_2}\biggr|\biggr)^L\Biggr]<\infty \label{H2j}
\end{align}
for $j_1,j_2=1,\ldots,q$ and all $L>0$. Recall that 
\begin{align*}
    {\bf{M}}^{\dagger}_{1,n,k_1,k_2}&=\frac{1}{nh_n}\sum_{i=1}^n\biggl\{(\Delta_{i}^n X)^{(k_1)}(\Delta_{i}^n X)^{(k_2)}{\bf{1}}_{\{|\Delta_{i}^n X|\leq Dh_n^{\rho}\}}\\
    &\qquad\qquad\qquad-{\bf{E}}\Bigl[(\Delta_i^n X)^{(k_1)}(\Delta_i^n X)^{(k_2)}{\bf{1}}_{\{|\Delta_i^n X|\leq Dh_n^{\rho}\}}\big|\mathcal{F}_{i-1}^n\Bigr]\biggr\},\\
    {\bf{R}}^{\dagger}_{1,n,k_1,k_2}&=\frac{1}{nh_n}\sum_{i=1}^n\biggl\{{\bf{E}}\Bigl[(\Delta_i^n X)^{(k_1)}(\Delta_i^n X)^{(k_2)}{\bf{1}}_{\{|\Delta_i^n X|\leq Dh_n^{\rho}\}}\big|\mathcal{F}_{i-1}^n\Bigr]-h_n{\bf{\Sigma}}(\theta_0)_{k_1k_2}\biggr\}
\end{align*}
and
\begin{align*}
    {\bf{M}}^{\dagger}_{2,n}&=\frac{1}{n}\sum_{i=1}^n\biggl\{{\bf{1}}_{\{|\Delta_{i}^n X|\leq Dh_n^{\rho}\}}-{\bf{E}}\Bigl[{\bf{1}}_{\{|\Delta_i^n X|\leq Dh_n^{\rho}\}}\big|\mathcal{F}_{i-1}^n\Bigr]\biggr\},\\
    {\bf{R}}^{\dagger}_{2,n}&=\frac{1}{n}\sum_{i=1}^n\biggl\{{\bf{E}}\Bigl[{\bf{1}}_{\{|\Delta_i^n X|\leq Dh_n^{\rho}\}}\big|\mathcal{F}_{i-1}^n\Bigr]-1\biggr\}
\end{align*}
for $k_1,k_2=1,\ldots,p$. Since
\begin{align*}
    &\quad\ \frac{1}{n}\partial_{\theta^{(j_1)}}\partial_{\theta^{(j_2)}}{\bf{H}}_n(\theta_0)+{\bf{I}}(\theta_0)_{j_1j_2}\\
    &=-\frac{1}{2nh_n}\sum_{i=1}^n\sum_{k_1=1}^p\sum_{k_2=1}^p\bigl(\partial_{\theta^{(j_1)}}\partial_{\theta^{(j_2)}}{\bf{\Sigma}}(\theta_0)^{-1}\bigr)_{k_1k_2}(\Delta_{i}^n X)^{(k_1)}(\Delta_{i}^n X)^{(k_2)}{\bf{1}}_{\{|\Delta_{i}^n X|\leq Dh_n^{\rho}\}}\\
    &\qquad+\frac{1}{2}\sum_{k_1=1}^p\sum_{k_2=1}^p
    \bigl(\partial_{\theta^{(j_1)}}\partial_{\theta^{(j_2)}}{\bf{\Sigma}}(\theta_0)^{-1}\bigr)_{k_1k_2}{\bf{\Sigma}}(\theta_0)_{k_1k_2}\\
    &\qquad-\frac{1}{2}\partial_{\theta^{(j_1)}}\partial_{\theta^{(j_2)}}
    \log\det {\bf{\Sigma}}(\theta_0)\biggl\{\frac{1}{n}\sum_{i=1}^n{\bf{1}}_{\{|\Delta_{i}^n X|\leq Dh_n^{\rho}\}}-1\biggr\}\\
    &=-\frac{1}{2nh_n}\sum_{i=1}^n\sum_{k_1=1}^p\sum_{k_2=1}^p\bigl(\partial_{\theta^{(j_1)}}\partial_{\theta^{(j_2)}}{\bf{\Sigma}}(\theta_0)^{-1}\bigr)_{k_1k_2}\\
    &\qquad\qquad\qquad\qquad\times\Bigl\{(\Delta_{i}^n X)^{(k_1)}(\Delta_{i}^n X)^{(k_2)}{\bf{1}}_{\{|\Delta_{i}^n X|\leq Dh_n^{\rho}\}}-h_n{\bf{\Sigma}}(\theta_0)_{k_1k_2}\Bigr\}\\
    &\qquad-\frac{1}{2}\partial_{\theta^{(j_1)}}\partial_{\theta^{(j_2)}}
    \log\det {\bf{\Sigma}}(\theta_0)\biggl\{\frac{1}{n}\sum_{i=1}^n{\bf{1}}_{\{|\Delta_{i}^n X|\leq Dh_n^{\rho}\}}-1\biggr\},
\end{align*}
we have the following expression:
\begin{align*}
    \frac{1}{n}\partial_{\theta^{(j_1)}}\partial_{\theta^{(j_2)}}{\bf{H}}_n(\theta_0)+{\bf{I}}(\theta_0)_{j_1j_2}
    &=-\frac{1}{2}\sum_{k_1=1}^p\sum_{k_2=1}^p\bigl(\partial_{\theta^{(j_1)}}\partial_{\theta^{(j_2)}}{\bf{\Sigma}}(\theta_0)^{-1}\bigr)_{k_1k_2}{\bf{M}}^{\dagger}_{1,n,k_1,k_2}\\
    &\quad-\frac{1}{2}\sum_{k_1=1}^p\sum_{k_2=1}^p\bigl(\partial_{\theta^{(j_1)}}\partial_{\theta^{(j_2)}}{\bf{\Sigma}}(\theta_0)^{-1}\bigr)_{k_1k_2}{\bf{R}}^{\dagger}_{1,n,k_1,k_2}\\
    &\quad-\frac{1}{2}\Bigl(\partial_{\theta^{(j_1)}}\partial_{\theta^{(j_2)}}
    \log\det {\bf{\Sigma}}(\theta_0)\Bigr){\bf{M}}^{\dagger}_{2,n}\\
    &\quad-\frac{1}{2}\Bigl(\partial_{\theta^{(j_1)}}\partial_{\theta^{(j_2)}}
    \log\det {\bf{\Sigma}}(\theta_0)\Bigr){\bf{R}}^{\dagger}_{2,n}.
\end{align*}
Therefore, it holds from (\ref{M1d})-(\ref{R2d}) that
\begin{align*}
    &\quad\ \sup_{n\in\mathbb{N}}{\bf{E}}\Biggl[\biggl(n^{\varepsilon}\biggl|\frac{1}{n}\partial_{\theta^{(j_1)}}\partial_{\theta^{(j_2)}}{\bf{H}}_n(\theta_0)+{\bf{I}}(\theta_0)_{j_1j_2}\biggr|\biggr)^L\Biggr]\\
    &\leq C_L\sum_{k_1=1}^p\sum_{k_2=1}^p\sup_{n\in\mathbb{N}}{\bf{E}}\Biggl[\biggl(n^{\varepsilon}\bigl|{\bf{M}}^{\dagger}_{1,n,k_1,k_2}\bigr|\biggr)^L\Biggr]\\
    &\qquad+C_L\sum_{k_1=1}^p\sum_{k_2=1}^p\sup_{n\in\mathbb{N}}{\bf{E}}\Biggl[\biggl(n^{\varepsilon}\bigl|{\bf{R}}^{\dagger}_{1,n,k_1,k_2}\bigr|\biggr)^L\Biggr]\\
    &\qquad+C_L\sup_{n\in\mathbb{N}}{\bf{E}}\Biggl[\biggl(n^{\varepsilon}\bigl|{\bf{M}}^{\dagger}_{2,n}\bigr|\biggr)^L\Biggr]+C_L\sup_{n\in\mathbb{N}}{\bf{E}}\Biggl[\biggl(n^{\varepsilon}\bigl|{\bf{R}}^{\dagger}_{2,n}\bigr|\biggr)^L\Biggr]<\infty
\end{align*}
for all $L\geq 1$, which deduces (\ref{H2j}). This completes the proof.
\end{proof}
\begin{lemma}[Proposition 3 in Ogihara and Yoshida \cite{Ogihara(2011)}]\label{Ogihara}
Let $k\in\mathbb{N}$ and $L\geq 2^{k-1}$. Assume that $\{F_i\}_{i=1}^n$ is 
a sequence of real-valued random variables adapted to the filtration $(\mathcal{F}_{i})_{i=0}^n$ such that
\begin{align*}
    {\bf{E}}\Bigl[|F_i|^L\Bigr]<\infty
\end{align*}
for $i=1,\ldots,n$. Then,
\begin{align*}
\begin{split}
    {\bf{E}}\Biggl[\biggl|\sum_{i=1}^nF_i\biggr|^L\Biggr]&\leq C_{L,k}{\bf{E}}
    \Biggl[\biggl|\sum_{i=1}^n\psi_i^{k+1}(F_i)\biggr|^{L/2^k}\Biggr]\\
    &\qquad+C_{L,k}\sum_{j=1}^k{\bf{E}}
    \Biggl[\biggl|\sum_{i=1}^n{\bf{E}}\bigl[\psi_i^{j}(F_i)|\mathcal{F}^n_{i-1}\bigr]\biggr|^{L/2^{j-1}}\Biggr],
\end{split}
\end{align*}
where $\psi_i^{1}(F)=F$ and 
\begin{align*}
    \psi_i^{j+1}(F)=\Bigl(\psi_i^{j}(F)-{\bf{E}}\bigl[\psi_i^{j}(F)|\mathcal{F}^n_{i-1}\bigr]\Bigr)^2
\end{align*}
for $j\in\mathbb{N}$.
\end{lemma}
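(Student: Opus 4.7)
The plan is to establish this inequality by induction on $k$, using the discrete-time Burkholder–Davis–Gundy (BDG) inequality as the main tool at each stage.

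For the base case $k=1$ with $L\geq 1$, I would split $\sum_{i=1}^n F_i=\sum_{i=1}^n (F_i-{\bf{E}}[F_i|\mathcal{F}_{i-1}^n])+\sum_{i=1}^n{\bf{E}}[F_i|\mathcal{F}_{i-1}^n]$, note that the first sum is a discrete-time $(\mathcal{F}_i^n)$-martingale, and invoke BDG at exponent $L$ to obtain ${\bf{E}}[|\sum_{i=1}^n(F_i-{\bf{E}}[F_i|\mathcal{F}_{i-1}^n])|^L]\leq C_L{\bf{E}}[(\sum_{i=1}^n(F_i-{\bf{E}}[F_i|\mathcal{F}_{i-1}^n])^2)^{L/2}]=C_L{\bf{E}}[|\sum_{i=1}^n\psi_i^2(F_i)|^{L/2}]$, since the increments of the martingale are precisely $F_i-{\bf{E}}[F_i|\mathcal{F}_{i-1}^n]=\psi_i^1(F_i)-{\bf{E}}[\psi_i^1(F_i)|\mathcal{F}_{i-1}^n]$, whose squares are $\psi_i^2(F_i)$. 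Combined with the elementary bound $(a+b)^L\leq 2^{L-1}(a^L+b^L)$, this yields the claim for $k=1$.

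For the inductive step, I would assume the statement for some $k\geq 1$ under $L\geq 2^{k-1}$ and derive it for $k+1$ under $L\geq 2^k$. Starting from the bound at level $k$, the only quantity that still needs refining is ${\bf{E}}[|\sum_{i=1}^n\psi_i^{k+1}(F_i)|^{L/2^k}]$; applying the base-case argument to the $(\mathcal{F}_i^n)$-adapted sequence $\{\psi_i^{k+1}(F_i)\}_{i=1}^n$ with exponent $L/2^k\geq 1$ produces a martingale-difference part whose predictable quadratic bracket is exactly $\sum_{i=1}^n\psi_i^{k+2}(F_i)$, together with a residual term of the form ${\bf{E}}[|\sum_{i=1}^n{\bf{E}}[\psi_i^{k+1}(F_i)|\mathcal{F}_{i-1}^n]|^{L/2^k}]$. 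Substituting this refined bound back into the level-$k$ inequality lines up precisely with the asserted inequality at level $k+1$, after absorbing multiplicative constants into $C_{L,k+1}$.

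The main obstacle is keeping the admissible range of exponents consistent across levels: BDG at the $j$-th iteration is applied at exponent $L/2^{j-1}$, and the global hypothesis $L\geq 2^{k-1}$ is sharp for ensuring that every invocation of BDG stays at an exponent $\geq 1$. A secondary technical point is to verify by a side induction on $j$ that each $\psi_i^{j}(F_i)$ is $\mathcal{F}_i^n$-measurable, which is immediate from the recursive definition $\psi_i^{j+1}(F)=(\psi_i^{j}(F)-{\bf{E}}[\psi_i^{j}(F)|\mathcal{F}_{i-1}^n])^2$ and guarantees that the corresponding centred differences at each level genuinely form martingale differences with respect to $(\mathcal{F}_i^n)$. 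Once these two points are in place, propagating the constants $C_{L,k}$ is routine bookkeeping.
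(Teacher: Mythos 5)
Your argument is correct: the induction on $k$, with the martingale/compensator splitting at each level and the discrete Burkholder inequality applied at exponent $L/2^{j-1}\geq 1$, is exactly the standard proof of this estimate, and you correctly identify $\psi_i^2(G_i)=\psi_i^{k+2}(F_i)$ for $G_i=\psi_i^{k+1}(F_i)$ as the mechanism that advances the induction. The paper itself gives no proof — it imports the statement verbatim as Proposition 3 of Ogihara and Yoshida — so there is nothing internal to compare against; your proof is essentially the one in that reference.
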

\begin{lemma}\label{H1lemma}
Suppose that $[{\bf{A1}}]$-$[{\bf{A4}}]$ hold. Then, for any $L>0$,
\begin{align*}
    \sup_{n\in\mathbb{N}} {\bf{E}}\Biggl[\biggl|\frac{1}{\sqrt{n}}\partial_{\theta}{\bf{H}}_n(\theta_0)\biggr|^L\Biggr]<\infty.
\end{align*}
\end{lemma}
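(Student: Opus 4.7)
The plan is to show, componentwise, that $\sup_n {\bf{E}}\bigl[\bigl|\partial_{\theta^{(j)}}{\bf{H}}_n(\theta_0)\bigr|^L\bigr]\leq C_L n^{L/2}$ for each $j=1,\ldots,q$ and every $L\geq 1$. The first step is to fuse the two pieces of $\partial_{\theta^{(j)}}{\bf{H}}_n(\theta_0)$ into a single sum of centered quadratic forms. Using $\tr\bigl({\bf{\Sigma}}(\theta_0)^{-1}\partial_{\theta^{(j)}}{\bf{\Sigma}}(\theta_0)\bigr)=\partial_{\theta^{(j)}}\log\det{\bf{\Sigma}}(\theta_0)$ to absorb the $\log\det$ drift, one obtains
\begin{align*}
\partial_{\theta^{(j)}}{\bf{H}}_n(\theta_0)=-\frac{1}{2h_n}\sum_{i=1}^n\sum_{k_1,k_2=1}^p\bigl(\partial_{\theta^{(j)}}{\bf{\Sigma}}(\theta_0)^{-1}\bigr)_{k_1k_2}\bigl\{(\Delta_i^n X)^{(k_1)}(\Delta_i^n X)^{(k_2)}-h_n{\bf{\Sigma}}(\theta_0)_{k_1k_2}\bigr\}{\bf{1}}_{\{|\Delta_i^n X|\leq Dh_n^{\rho}\}}.
\end{align*}
Denote the $i$-th summand by $\eta_{i,j}$. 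Lemma \ref{Elemma}, combined with the conditional probability estimate ${\bf{P}}(|\Delta_i^n X|\leq Dh_n^{\rho}|\mathcal{F}_{i-1}^n)=\bar R_{i-1}(h_n,\xi,\delta,\varepsilon,\zeta)$ already used in the proof of Lemma \ref{Ylemma}, gives ${\bf{E}}[\eta_{i,j}|\mathcal{F}_{i-1}^n]=R_{i-1}(h_n,\xi,\delta,\varepsilon,\zeta)$. Setting $F_i:=\eta_{i,j}-{\bf{E}}[\eta_{i,j}|\mathcal{F}_{i-1}^n]$, I decompose $\partial_{\theta^{(j)}}{\bf{H}}_n(\theta_0)=M_n+R_n$, where $M_n=\sum_i F_i$ is a martingale with respect to $(\mathcal{F}^n_i)$ and $|R_n|\leq\sum_i|R_{i-1}(h_n,\xi,\delta,\varepsilon,\zeta)|$. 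The estimate used for ${\bf{R}}^{\dagger}_{1,n,k_1,k_2}$ in Lemma \ref{Ylemma} then shows $\sup_n {\bf{E}}[|R_n|^L]<\infty$, so the remainder is harmless and the task reduces to proving ${\bf{E}}[|M_n|^L]\leq C_Ln^{L/2}$.

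For the martingale $M_n$ I would invoke the Ogihara--Yoshida inequality (Lemma \ref{Ogihara}) with $k:=\max(2,\lceil \log_2 L\rceil)$. The $j=1$ term vanishes because $F_i$ is a martingale difference. The $j=2$ term is where I would extract a sharp moment identity: expanding the square of the centered quadratic form and invoking Lemma \ref{Elemma} in both its second- and fourth-moment forms produces the key cancellation ${\bf{E}}[F_i^2|\mathcal{F}_{i-1}^n]=R_{i-1}(1,\xi,\delta,\varepsilon,\zeta)$, so the corresponding contribution to the Ogihara--Yoshida bound is at most $Cn^{L/2}$. For $3\leq j\leq k$ and for the residual $\sum_i\psi_i^{k+1}(F_i)$, I would instead use the deterministic pathwise bound $|F_i|\leq Ch_n^{2\rho-1}$, immediate from $|\Delta_i^n X|{\bf{1}}_{\{|\Delta_i^n X|\leq Dh_n^{\rho}\}}\leq Dh_n^{\rho}$. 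Induction on $j$ then yields $\psi_i^j(F_i)\leq C_j h_n^{(2\rho-1)2^{j-1}}$ almost surely, and H\"older's inequality bounds the $j$-th contribution by $Cn^{L/2^{j-1}+(1-2\rho)L}$. Since $h_n\asymp n^{-1}$, this is $\leq Cn^{L/2}$ exactly when $1/2^{j-1}\leq 2\rho-1/2$, and the standing hypothesis $\rho\in[3/8,1/2)$ supplies $2\rho-1/2\geq 1/4$, so the condition holds for every $j\geq 3$ and, in particular, for $j=k+1\geq 3$.

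The main obstacle is precisely the $j=2$ term: the crude pathwise bound alone would yield $(nh_n^{2(2\rho-1)})^{L/2}=n^{L/2+(1-2\rho)L}$, which strictly exceeds the target $n^{L/2}$. The sharp conditional-variance identity ${\bf{E}}[F_i^2|\mathcal{F}_{i-1}^n]=R_{i-1}(1,\xi,\delta,\varepsilon,\zeta)$, obtained via the cancellations between the fourth-order conditional expectation of $\Delta_i^n X$ and the products of second-order ones provided by Lemma \ref{Elemma}, is indispensable there. Once that sharp bound is in place for $j=2$, the iterated conditioning in Lemma \ref{Ogihara} halves the effective exponent on $L$ at each step, and the threshold $\rho\geq 3/8$ is exactly what ensures that the pathwise truncation bound becomes adequate already from $j=3$ onward, so that only finitely many iterations are needed to close the argument.
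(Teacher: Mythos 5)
Your proposal is correct and follows essentially the same route as the paper: both rewrite the score as a sum of (near-)centered truncated quadratic forms, apply the Ogihara--Yoshida inequality (Lemma \ref{Ogihara}), obtain the sharp conditional-variance bound ${\bf{E}}[\psi_i^2|\mathcal{F}_{i-1}^n]=R_{i-1}(1,\xi,\delta,\varepsilon,\zeta)$ from Lemma \ref{Elemma}, and control the higher-order $\psi$ terms via the truncation $|\Delta_i^n X|{\bf{1}}_{\{|\Delta_i^n X|\leq Dh_n^{\rho}\}}\leq Dh_n^{\rho}$ together with $\rho\geq 3/8$. The only differences are cosmetic: you center the summands explicitly (so the $\psi^1$ term vanishes rather than being $O(h_n)$) and use a deterministic pathwise bound for $\psi_i^j$, $j\geq 3$, where the paper fixes $k=2$ and uses $L^p$-moment bounds via Cauchy--Schwarz; both variants close the argument at the same exponent $n^{2L(3/8-\rho)}$.
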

\begin{proof}
It is enough to show
\begin{align*}
    \sup_{n\in\mathbb{N}} {\bf{E}}\Biggl[\biggl|\frac{1}{\sqrt{n}}\partial_{\theta^{(j)}}{\bf{H}}_n(\theta_0)\biggr|^L\Biggr]<\infty
\end{align*}
for $j=1,\ldots,q$ and all $L>0$. Set
\begin{align*}
     K_{i,n}^{(j)}&=\Bigl\{h_n^{-1}(\Delta_{i}^n X)^{\top}\partial_{\theta^{(j)}}{\bf{\Sigma}}(\theta_0)^{-1}(\Delta_{i}^n X)+\partial_{\theta^{(j)}}\log\det {\bf{\Sigma}}(\theta_0)\Bigr\}{\bf{1}}_{\{|\Delta_{i}^n X|\leq Dh_n^{\rho}\}}
\end{align*}
for $i=1,\ldots,n$. Note that 
\begin{align*}
    \bigl\{K_{i,n}^{(j)}\bigr\}_{i=1}^n
\end{align*}
is adapted to $(\mathcal{F}_{i})_{i=0}^n$. Since $1-2\rho>0$ and
\begin{align*}
    {\bf{E}}\Bigl[\bigl|K_{i,n}^{(j)}\bigr|^L\Bigr]
    &\leq C_Lh_n^{-L}{\bf{E}}\Bigl[\bigl|(\Delta_{i}^n X)^{\top}\partial_{\theta^{(j)}}{\bf{\Sigma}}(\theta_0)^{-1}(\Delta_{i}^n X)\bigr|^L{\bf{1}}_{\{|\Delta_{i}^n X|\leq Dh_n^{\rho}\}}\Bigr]\\
    &\qquad\qquad\qquad\quad+C_L\bigl|\partial_{\theta^{(j)}}\log\det {\bf{\Sigma}}(\theta_0)\bigr|^L{\bf{E}}\Bigl[{\bf{1}}_{\{|\Delta_{i}^n X|\leq Dh_n^{\rho}\}}\Bigr]\\
    &\leq C_Lh_n^{-L}\bigl|\partial_{\theta^{(j)}}{\bf{\Sigma}}(\theta_0)^{-1}\bigr|^L{\bf{E}}\Bigl[\bigl|\Delta_{i}^n X\bigr|^{2L}{\bf{1}}_{\{|\Delta_{i}^n X|\leq Dh_n^{\rho}\}}\Bigr]+C_L\\
    &\leq C_Lh_n^{-L}\times h_n^{2L\rho}+C_L\\
    &\leq C_Ln^{L(1-2\rho)}+C_L
\end{align*}
for all $L\geq 1$, we obtain
\begin{align}
    {\bf{E}}\Bigl[\bigl|K_{i,n}^{(j)}\bigr|^L\Bigr]\leq C_Ln^{L(1-2\rho)}.\label{KL}
\end{align}
Noting that
\begin{align*}
    \frac{1}{\sqrt{n}}\partial_{\theta^{(j)}}{\bf{H}}_n(\theta_0)=-\frac{1}{2\sqrt{n}}\sum_{i=1}^n K_{i,n}^{(j)},
\end{align*}
by Lemma \ref{Ogihara}, one gets
\begin{align*}
    {\bf{E}}\Biggl[\biggl|\frac{1}{\sqrt{n}}\partial_{\theta^{(j)}}{\bf{H}}_n(\theta_0)\biggr|^L\Biggr]&\leq\frac{C_L}{n^{L/2}}{\bf{E}}\Biggl[\biggl|\sum_{i=1}^nK_{i,n}^{(j)}\biggr|^L\Biggr]\\
    &\leq \frac{C_L}{n^{L/2}}{\bf{E}}
    \Biggl[\biggl|\sum_{i=1}^n\psi_i^{3}\bigl(K_{i,n}^{(j)}\bigr)\biggr|^{L/4}\Biggr]\\
    &\quad+\frac{C_L}{n^{L/2}}{\bf{E}}
    \Biggl[\biggl|\sum_{i=1}^n{\bf{E}}\Bigl[\psi_i^{1}\bigl(K_{i,n}^{(j)}\bigr)
    \big|\mathcal{F}^n_{i-1}\Bigr]\biggr|^{L}\Biggr]\\
    &\quad+\frac{C_L}{n^{L/2}}{\bf{E}}
    \Biggl[\biggl|\sum_{i=1}^n{\bf{E}}\Bigl[\psi_i^{2}\bigl(K_{i,n}^{(j)}\bigr)
    \big|\mathcal{F}^n_{i-1}\Bigr]\biggr|^{L/2}\Biggr]
\end{align*}
for all $L\geq 2$. Hence, it is sufficient to prove 
\begin{align}
    \sup_{n\in\mathbb{N}}\frac{1}{n^{L/2}}{\bf{E}}
    \Biggl[\biggl|\sum_{i=1}^n{\bf{E}}\Bigl[\psi_i^{1}\bigl(K_{i,n}^{(j)}\bigr)
    \big|\mathcal{F}^n_{i-1}\Bigr]\biggr|^{L}\Biggr]<\infty, \label{H1-1}\\
    \sup_{n\in\mathbb{N}}\frac{1}{n^{L/2}}{\bf{E}}
    \Biggl[\biggl|\sum_{i=1}^n{\bf{E}}\Bigl[\psi_i^{2}\bigl(K_{i,n}^{(j)}\bigr)
    \big|\mathcal{F}^n_{i-1}\Bigr]\biggr|^{L/2}\Biggr]<\infty \label{H1-2}
\end{align}
and
\begin{align}
    \sup_{n\in\mathbb{N}}\frac{1}{n^{L/2}}{\bf{E}}
    \Biggl[\biggl|\sum_{i=1}^n\psi_i^{3}\bigl(K_{i,n}^{(j)}\bigr)\biggr|^{L/4}\Biggr]<\infty \label{H1-3}
\end{align}
for all $L\geq 2$. \\
\ \\
Proof of (\ref{H1-1}).
For a sufficiently large $n$, we see from Lemma \ref{Elemma} that
\begin{align*}
    &\quad\ {\bf{E}}\Bigl[(\Delta_{i}^n X)^{\top}\partial_{\theta^{(j)}}{\bf{\Sigma}}(\theta_0)^{-1}(\Delta_{i}^n X){\bf{1}}_{\{|\Delta_{i}^n X|\leq Dh_n^{\rho}\}}|\mathcal{F}^n_{i-1}\Bigr]\\
    &=\sum_{k_1=1}^p\sum_{k_2=1}^p\bigl(\partial_{\theta^{(j)}}{\bf{\Sigma}}(\theta_0)^{-1}\bigr)_{k_1k_2}{\bf{E}}\Bigl[(\Delta_{i}^n X)^{(k_1)}(\Delta_{i}^n X)^{(k_2)}{\bf{1}}_{\{|\Delta_{i}^n X|\leq Dh_n^{\rho}\}}|\mathcal{F}^n_{i-1}\Bigr]\\
    &=\sum_{k_1=1}^p\sum_{k_2=1}^p\bigl(\partial_{\theta^{(j)}}{\bf{\Sigma}}(\theta_0)^{-1}\bigr)_{k_1k_2}\Bigl\{h_n {\bf{\Sigma}}(\theta_0)_{k_1k_2}
    +R_{i-1}(h_n^{2},\xi,\delta,\varepsilon,\zeta)\Bigr\}\\
    &=h_n\sum_{k_1=1}^p\sum_{k_2=1}^p\bigl(\partial_{\theta^{(j)}}{\bf{\Sigma}}(\theta_0)^{-1}\bigr)_{k_1k_2}{\bf{\Sigma}}(\theta_0)_{k_2k_1}
    +R_{i-1}(h_n^{2},\xi,\delta,\varepsilon,\zeta)
\end{align*}
and
\begin{align*}
    \sum_{k_1=1}^p\sum_{k_2=1}^p\bigl(\partial_{\theta^{(j)}}{\bf{\Sigma}}(\theta_0)^{-1}\bigr)_{k_1k_2}{\bf{\Sigma}}(\theta_0)_{k_2k_1}
    &=\tr\Bigl(\bigl(\partial_{\theta^{(j)}}{\bf{\Sigma}}(\theta_0)^{-1}\bigr){\bf{\Sigma}}(\theta_0)\Bigr)\\
    &=-\tr\Bigl({\bf{\Sigma}}(\theta_0)^{-1}\bigl(\partial_{\theta^{(j)}}{\bf{\Sigma}}(\theta_0)\bigr)\Bigr),
\end{align*}
which yields
\begin{align}
\begin{split}
    &{\bf{E}}\Bigl[(\Delta_{i}^n X)^{\top}\partial_{\theta^{(j)}}{\bf{\Sigma}}(\theta_0)^{-1}(\Delta_{i}^n X){\bf{1}}_{\{|\Delta_{i}^n X|\leq Dh_n^{\rho}\}}|\mathcal{F}^n_{i-1}\Bigr]\\
    &\qquad\qquad\qquad\qquad=-h_n\tr\Bigl({\bf{\Sigma}}(\theta_0)^{-1}\bigl(\partial_{\theta^{(j)}}{\bf{\Sigma}}(\theta_0)\bigr)\Bigr)+R_{i-1}(h_n^{2},\xi,\delta,\varepsilon,\zeta)
\end{split}\label{EXh}
\end{align}
for a sufficiently large $n$. Moreover, 
\begin{align}
    {\bf{E}}\Bigl[{\bf{1}}_{\{|\Delta_{i}^n X|\leq Dh_n^{\rho}\}}|\mathcal{F}^n_{i-1}\Bigr]
    &=\bar{R}_{i-1}(h_n,\xi,\delta,\varepsilon,\zeta) \label{indi}
\end{align}
for a sufficiently large $n$, so that
\begin{align*}
    &{\bf{E}}\Bigl[\partial_{\theta^{(j)}}\log\det {\bf{\Sigma}}(\theta_0){\bf{1}}_{\{|\Delta_{i}^n X|\leq Dh_n^{\rho}\}}|\mathcal{F}^n_{i-1}\Bigr]\\
    &\qquad\qquad\qquad\qquad=\tr\Bigl({\bf{\Sigma}}(\theta_0)^{-1}\bigl(\partial_{\theta^{(j)}}{\bf{\Sigma}}(\theta_0)\bigr)\Bigr)+R_{i-1}(h_n,\xi,\delta,\varepsilon,\zeta).
\end{align*}
Consequently, one has
\begin{align}
\begin{split}
    {\bf{E}}\Bigl[K_{i,n}^{(j)}\big|\mathcal{F}^n_{i-1}\Bigr]&=h_n^{-1}{\bf{E}}\Bigl[(\Delta_{i}^n X)^{\top}\partial_{\theta^{(j)}}{\bf{\Sigma}}(\theta_0)^{-1}(\Delta_{i}^n X){\bf{1}}_{\{|\Delta_{i}^n X|\leq Dh_n^{\rho}\}}|\mathcal{F}^n_{i-1}\Bigr]\\
    &\quad+{\bf{E}}\Bigl[\partial_{\theta^{(j)}}\log\det {\bf{\Sigma}}(\theta_0){\bf{1}}_{\{|\Delta_{i}^n X|\leq Dh_n^{\rho}\}}|\mathcal{F}^n_{i-1}\Bigr]=R_{i-1}(h_n,\xi,\delta,\varepsilon,\zeta)
\end{split}\label{K}
\end{align}
for a sufficiently large $n$. Therefore, 
\begin{align*}
    \frac{1}{n^{L/2}}{\bf{E}}\Biggl[\biggl|\sum_{i=1}^n{\bf{E}}\Bigl[\psi_i^{1}\bigl(K_{i,n}^{(j)}\bigr)\big|\mathcal{F}^n_{i-1}\Bigr]\biggr|^{L}\Biggr]&=\frac{1}{n^{L/2}} {\bf{E}}
    \Biggl[\biggl|\sum_{i=1}^n{\bf{E}}\Bigl[K_{i,n}^{(j)}\big|\mathcal{F}^n_{i-1}\Bigr]\biggr|^{L}\Biggr]\\
    &=\frac{1}{n^{L/2}} {\bf{E}}
    \Biggl[\biggl|\sum_{i=1}^n R_{i-1}(h_n,\xi,\delta,\varepsilon,\zeta)\biggr|^{L}\Biggr]\\
    &\leq \frac{C_Ln^{L-1}h_n^L}{n^{L/2}}\sum_{i=1}^n {\bf{E}}\Bigl[\bigl|R_{i-1}(1,\xi,\delta,\varepsilon,\zeta)\bigr|^L\Bigr]\\
    &\leq C_Ln^{-L/2}
\end{align*}
for a sufficiently large $n$ and all $ L\geq 1$, which gives
\begin{align*}
    \sup_{n\in\mathbb{N}}\frac{1}{n^{L/2}}{\bf{E}}\Biggl[\biggl|\sum_{i=1}^n{\bf{E}}\Bigl[\psi_i^{1}\bigl(K_{i,n}^{(j)}\bigr)\big|\mathcal{F}^n_{i-1}\Bigr]\biggr|^{L}\Biggr]<\infty
\end{align*}
for all $L\geq 1$. This completes the proof of (\ref{H1-1}).\\
\ \\
Proof of (\ref{H1-2}).
Since 
\begin{align*}
    \psi_i^{2}(F)&=\bigl(\psi_i^{1}(F)-{\bf{E}}\bigl[\psi_i^{1}(F)|\mathcal{F}^n_{i-1}\bigr]\bigr)^2\\
    &=\bigl(F-{\bf{E}}\bigl[F|\mathcal{F}^n_{i-1}\bigr]\bigr)^2=F^2-2F{\bf{E}}\bigl[F|\mathcal{F}^n_{i-1}\bigr]+{\bf{E}}\bigl[F|\mathcal{F}^n_{i-1}\bigr]^2
\end{align*}
for $F\in L^1({\bf{P}})$, we have
\begin{align*}
    {\bf{E}}\bigl[\psi_i^{2}(F)|\mathcal{F}^n_{i-1}\bigr]={\bf{E}}\bigl[F^2|\mathcal{F}^n_{i-1}\bigr]-{\bf{E}}\bigl[F|\mathcal{F}^n_{i-1}\bigr]^2.
\end{align*}
As it follows from Lemma \ref{Elemma}, (\ref{EXh}) and (\ref{indi}) that
\begin{align*}
    &\quad\ {\bf{E}}\Bigl[K_{i,n}^{(j)2}\big|\mathcal{F}^n_{i-1}\Bigr]\\
    &=h_n^{-2}{\bf{E}}\Bigl[\bigl\{(\Delta_{i}^n X)^{\top}\partial_{\theta^{(j)}}{\bf{\Sigma}}(\theta_0)^{-1}(\Delta_{i}^n X)\bigr\}^2{\bf{1}}_{\{|\Delta_{i}^n X|\leq Dh_n^{\rho}\}}\big|\mathcal{F}^n_{i-1}\Bigr]\\
    &\quad+2h_n^{-1}\bigl(\partial_{\theta^{(j)}}\log\det {\bf{\Sigma}}(\theta_0)\bigr){\bf{E}}\Bigl[(\Delta_{i}^n X)^{\top}\partial_{\theta^{(j)}}{\bf{\Sigma}}(\theta_0)^{-1}(\Delta_{i}^n X){\bf{1}}_{\{|\Delta_{i}^n X|\leq Dh_n^{\rho}\}}\big|\mathcal{F}^n_{i-1}\Bigr]\\
    &\quad+\bigl(\partial_{\theta^{(j)}}\log\det {\bf{\Sigma}}(\theta_0)\bigr)^2{\bf{E}}\Bigl[{\bf{1}}_{\{|\Delta_{i}^n X|\leq Dh_n^{\rho}\}}\big|\mathcal{F}^n_{i-1}\Bigr]\\
    &=h_n^{-2}\sum_{k_1=1}^p\sum_{k_2=1}^p\sum_{k_3=1}^p\sum_{k_4=1}^p
    \bigl(\partial_{\theta^{(j)}}{\bf{\Sigma}}(\theta_0)^{-1}\bigr)_{k_1k_2}\bigl(\partial_{\theta^{(j)}}{\bf{\Sigma}}(\theta_0)^{-1}\bigr)_{k_3k_4}\\
    &\qquad\qquad\quad\times {\bf{E}}\Bigl[(\Delta_{i}^n X)^{(k_1)}(\Delta_{i}^n X)^{(k_2)}(\Delta_{i}^n X)^{(k_3)}(\Delta_{i}^n X)^{(k_4)}{\bf{1}}_{\{|\Delta_{i}^n X|\leq Dh_n^{\rho}\}}|\mathcal{F}^n_{i-1}\Bigr]\\
    &\quad+R_{i-1}(1,\xi,\delta,\varepsilon,\zeta)+\bigl(\partial_{\theta^{(j)}}\log\det {\bf{\Sigma}}(\theta_0)\bigr)^2-R_{i-1}(h_n,\xi,\delta,\varepsilon,\zeta)\\
    &=R_{i-1}(1,\xi,\delta,\varepsilon,\zeta)
\end{align*}
for a sufficiently large $n$, we see from (\ref{K}) that
\begin{align*}
    {\bf{E}}\Bigl[\psi_i^{2}\bigl(K_{i,n}^{(j)}\bigr)\big|\mathcal{F}^n_{i-1}\Bigr]&={\bf{E}}\Bigl[K_{i,n}^{(j)2}\big|\mathcal{F}^n_{i-1}\Bigr]-{\bf{E}}\Bigl[K_{i,n}^{(j)}\big|\mathcal{F}^n_{i-1}\Bigr]^2=R_{i-1}(1,\xi,\delta,\varepsilon,\zeta)
\end{align*}
for a sufficiently large $n$. Hence, 
\begin{align*}
    \frac{1}{n^{L/2}}{\bf{E}}
    \Biggl[\biggl|\sum_{i=1}^n{\bf{E}}\Bigl[\psi_i^{2}\bigl(K_{i,n}^{(j)}\bigr)
    \big|\mathcal{F}^n_{i-1}\Bigr]\biggr|^{L/2}\Biggr]&=\frac{1}{n^{L/2}}{\bf{E}}
    \Biggl[\biggl|\sum_{i=1}^n R_{i-1}(1,\xi,\delta,\varepsilon,\zeta) \biggr|^{L/2}\Biggr]\\
    &\leq \frac{C_Ln^{L/2-1}}{n^{L/2}}\sum_{i=1}^n{\bf{E}}\Bigl[\bigl|R_{i-1}(1,\xi,\delta,\varepsilon,\zeta)\bigr|^{L/2}\Bigr]\\
    &\leq C_L
\end{align*}
for a sufficiently large $n$ and all $L\geq 2$, which implies
\begin{align*}
    \sup_{n\in\mathbb{N}}\frac{1}{n^{L/2}}{\bf{E}}
    \Biggl[\biggl|\sum_{i=1}^n{\bf{E}}\Bigl[\psi_i^{2}\bigl(K_{i,n}^{(j)}\bigr)
    \big|\mathcal{F}^n_{i-1}\Bigr]\biggr|^{L/2}\Biggr]<\infty
\end{align*}
for all $L\geq 2$. This completes the proof of (\ref{H1-2}).\\
\ \\
Proof of (\ref{H1-3}).
First, we note that 
\begin{align*}
    \psi_i^{3}(F)
    &=\bigl(\psi_i^{2}(F)-{\bf{E}}\bigl[\psi_i^{2}(F)|\mathcal{F}^n_{i-1}\bigr]\bigr)^2\\
    &=\bigl(F^2-2F{\bf{E}}\bigl[F|\mathcal{F}^n_{i-1}\bigr]+2{\bf{E}}\bigl[F|\mathcal{F}^n_{i-1}\bigr]^2-{\bf{E}}\bigl[F^2|\mathcal{F}^n_{i-1}\bigr]\bigr)^2\\
    &=F^4-4F^3{\bf{E}}\bigl[F|\mathcal{F}^n_{i-1}\bigr]+8F^2{\bf{E}}\bigl[F|\mathcal{F}^n_{i-1}\bigr]^2\\
    &\quad-8
    F{\bf{E}}\bigl[F|\mathcal{F}^n_{i-1}\bigr]^3+4{\bf{E}}\bigl[F|\mathcal{F}^n_{i-1}\bigr]^4-2F^2{\bf{E}}\bigl[F^2|\mathcal{F}^n_{i-1}\bigr]\\
    &\quad+4F{\bf{E}}\bigl[F|\mathcal{F}^n_{i-1}\bigr]{\bf{E}}\bigl[F^2|\mathcal{F}^n_{i-1}\bigr]-4{\bf{E}}\bigl[F|\mathcal{F}^n_{i-1}\bigr]^2{\bf{E}}\bigl[F^2|\mathcal{F}^n_{i-1}\bigr]+{\bf{E}}\bigl[F^2|\mathcal{F}^n_{i-1}\bigr]^2
\end{align*}
for $F\in L^2({\bf{P}})$, so that
\begin{align*}
    {\bf{E}}\Bigl[\big|\psi_i^{3}(F)\bigr|^L\Bigr]&\leq C_L{\bf{E}}\Bigl[|F|^{4L}\Bigr]+C_L{\bf{E}}\Bigl[\big|F^3{\bf{E}}\bigl[F|\mathcal{F}^n_{i-1}\bigr]\bigr|^L\Bigr]+C_L{\bf{E}}\Bigl[\big|F^2{\bf{E}}\bigl[F|\mathcal{F}^n_{i-1}\bigr]^2\bigr|^L\Bigr]\\
    &\quad+C_L{\bf{E}}\Bigl[\big|F{\bf{E}}\bigl[F|\mathcal{F}^n_{i-1}\bigr]^3\bigr|^L\Bigr]+C_L{\bf{E}}\Bigl[\big|{\bf{E}}\bigl[F|\mathcal{F}^n_{i-1}\bigr]\bigr|^{4L}\Bigr]\\
    &\quad+C_L{\bf{E}}\Bigl[\big|F^2{\bf{E}}\bigl[F^2|\mathcal{F}^n_{i-1}\bigr]\bigr|^L\Bigr]+C_L{\bf{E}}\Bigl[\big|F{\bf{E}}\bigl[F|\mathcal{F}^n_{i-1}\bigr]{\bf{E}}\bigl[F^2|\mathcal{F}^n_{i-1}\bigr]\bigr|^L\Bigr]\\
    &\quad+C_L{\bf{E}}\Bigl[\big|{\bf{E}}\bigl[F|\mathcal{F}^n_{i-1}\bigr]^2{\bf{E}}\bigl[F^2|\mathcal{F}^n_{i-1}\bigr]\bigr|^L\Bigr]+C_L{\bf{E}}\Bigl[\big|{\bf{E}}\bigl[F^2|\mathcal{F}^n_{i-1}\bigr]\bigr|^{2L}\Bigr].
\end{align*}
The second term on the right-hand side can be evaluated as follows:
\begin{align*}
    {\bf{E}}\Bigl[\big|F^3{\bf{E}}\bigl[F|\mathcal{F}^n_{i-1}\bigr]\bigr|^L\Bigr]&\leq 
    {\bf{E}}\Bigl[|F|^{6L}\Bigr]^{1/2}{\bf{E}}\Bigl[\big|{\bf{E}}\bigl[F|\mathcal{F}^n_{i-1}\bigr]\bigr|^{2L}\Bigr]^{1/2}\\
    &\leq {\bf{E}}\Bigl[|F|^{6L}\Bigr]^{1/2}{\bf{E}}\Bigl[{\bf{E}}\bigl[|F|^{2L}|\mathcal{F}^n_{i-1}\bigr]\Bigr]^{1/2}\\
    &\leq {\bf{E}}\Bigl[|F|^{6L}\Bigr]^{1/2}{\bf{E}}\Bigl[|F|^{2L}\Bigr]^{1/2}.
\end{align*}
Similarly, we have
\begin{align*}
    {\bf{E}}\Bigl[\big|F^2{\bf{E}}\bigl[F|\mathcal{F}^n_{i-1}\bigr]^2\bigr|^L\Bigr]&\leq {\bf{E}}\Bigl[|F|^{4L}\Bigr]^{1/2}{\bf{E}}\Bigl[|F|^{4L}\Bigr]^{1/2},\\
    {\bf{E}}\Bigl[\big|F{\bf{E}}\bigl[F|\mathcal{F}^n_{i-1}\bigr]^3\bigr|^L\Bigr]&\leq 
    {\bf{E}}\Bigl[|F|^{2L}\Bigr]^{1/2}{\bf{E}}\Bigl[|F|^{6L}\Bigr]^{1/2},\\
    {\bf{E}}\Bigl[\big|{\bf{E}}\bigl[F|\mathcal{F}^n_{i-1}\bigr]\bigr|^{4L}\Bigr]&\leq {\bf{E}}\Bigl[|F|^{4L}\Bigr],\\
    {\bf{E}}\Bigl[\big|F^2{\bf{E}}\bigl[F^2|\mathcal{F}^n_{i-1}\bigr]\bigr|^L\Bigr]&\leq
    {\bf{E}}\Bigl[|F|^{4L}\Bigr]^{1/2}{\bf{E}}\Bigl[|F|^{4L}\Bigr]^{1/2},\\
    {\bf{E}}\Bigl[\big|F{\bf{E}}\bigl[F|\mathcal{F}^n_{i-1}\bigr]{\bf{E}}\bigl[F^2|\mathcal{F}^n_{i-1}\bigr]\bigr|^L\Bigr]&\leq 
    {\bf{E}}\Bigl[|F|^{2L}\Bigr]^{1/2}{\bf{E}}\Bigl[|F|^{4L}\Bigr]^{1/4}{\bf{E}}\Bigl[|F|^{8L}\Bigr]^{1/4},\\
    {\bf{E}}\Bigl[\big|{\bf{E}}\bigl[F|\mathcal{F}^n_{i-1}\bigr]^2{\bf{E}}\bigl[F^2|\mathcal{F}^n_{i-1}\bigr]\bigr|^L\Bigr]&\leq
    {\bf{E}}\Bigl[|F|^{4L}\Bigr]^{1/2}{\bf{E}}\Bigl[|F|^{4L}\Bigr]^{1/2}
\end{align*}
and
\begin{align*}
    {\bf{E}}\Bigl[\big|{\bf{E}}\bigl[F^2|\mathcal{F}^n_{i-1}\bigr]\bigr|^{2L}\Bigr]&\leq 
    {\bf{E}}\Bigl[|F|^{4L}\Bigr],
\end{align*}
which yields
\begin{align*}
    {\bf{E}}\Bigl[\big|\psi_i^{3}(F)\bigr|^L\Bigr]&\leq C_L{\bf{E}}\Bigl[|F|^{4L}\Bigr]+C_L
    {\bf{E}}\Bigl[|F|^{2L}\Bigr]^{1/2}{\bf{E}}\Bigl[|F|^{6L}\Bigr]^{1/2}\\
    &\quad+C_L{\bf{E}}\Bigl[|F|^{4L}\Bigr]^{1/2}{\bf{E}}\Bigl[|F|^{4L}\Bigr]^{1/2}\\
    &\quad+C_L{\bf{E}}\Bigl[|F|^{2L}\Bigr]^{1/2}{\bf{E}}\Bigl[|F|^{4L}\Bigr]^{1/4}{\bf{E}}\Bigl[|F|^{8L}\Bigr]^{1/4}.
\end{align*}
Therefore, from (\ref{KL}), it is shown that
\begin{align*}
    {\bf{E}}\Bigl[\big|\psi_i^{3}\bigl(K_{i,n}^{(j)}\bigr)\bigr|^L\Bigr]\leq C_Ln^{4L(1-2\rho)}
\end{align*}
for all $L\geq 1$, and
\begin{align*}
    \frac{1}{n^{L/2}}{\bf{E}}
    \Biggl[\biggl|\sum_{i=1}^n\psi_i^{3}\bigl(K_{i,n}^{(j)}\bigr)\biggr|^{L/4}\Biggr]&\leq \frac{C_Ln^{L/4-1}}{n^{L/2}}\sum_{i=1}^n{\bf{E}}\Bigl[\big|\psi_i^{3}\bigl(K_{i,n}^{(j)}\bigr)\bigr|^{L/4}\Bigr]\\
    &\leq C_Ln^{-L/4}n^{L(1-2\rho)}
\end{align*}
for all $L\geq 4$. Since $3/8-\rho\leq 0$ and
\begin{align*}
    n^{-L/4}n^{L(1-2\rho)}=n^{L(3/4-2\rho)}=n^{2L(3/8-\rho)},
\end{align*}
we have
\begin{align*}
    \sup_{n\in\mathbb{N}}n^{2L(3/8-\rho)}<\infty,
\end{align*}
which gives
\begin{align*}
    \sup_{n\in\mathbb{N}}\frac{1}{n^{L/2}}{\bf{E}}
    \Biggl[\biggl|\sum_{i=1}^n\psi_i^{3}\bigl(K_{i,n}^{(j)}\bigr)\biggr|^{L/4}\Biggr]<\infty
\end{align*}
for all $L\geq 4$. This completes the proof of (\ref{H1-3}).
\end{proof}
\begin{proof}[\textbf{Proof of Theorem \ref{Zine}}]
We need to verify the conditions [A1$^{\prime\prime}$], [A4$^{\prime}$], [A6], [B1] and [B2] in Theorem 3 (c) of Yoshida \cite{Yoshida(2011)}. First, we define the constants $\alpha$, $\rho_1$, $\rho_2$, $\beta_1$, and $\beta_2$ to ensure [A4$^{\prime}$] as follows:
\begin{align*}
    0<\beta_1\leq\tfrac{1}{4},\quad
    0<\rho_1<\min\Bigl\{1,\beta,\tfrac{2\beta_1}{1-\alpha}\Bigr\},\quad
    2\alpha<\rho_2,\quad
    \beta_2\geq\tfrac{1}{4},\quad
    1-2\beta_2-\rho_2>0,
\end{align*}
where $\beta=\alpha(1-\alpha)^{-1}$. From Lemmas \ref{Ylemma} and \ref{H1lemma}, for all $L>0$, we have
\begin{align*}
    \sup_{n\in\mathbb{N}}{\bf E}\Biggl[\biggl|\frac{1}{\sqrt{n}}
    \partial_{\theta}{\bf H}_n(\theta_0)\biggr|^{M_1}\Biggr]<\infty
\end{align*}
and
\begin{align*}
    \sup_{n\in\mathbb{N}}{\bf E}
    \Biggl[\Biggl(\sup_{\theta\in\Theta}n^{1/2-\beta_2}
    \bigl|{\bf Y}_n(\theta)-{\bf Y}(\theta)\bigr|\biggr)^{M_2}\Biggr]
    <\infty,
\end{align*}
where $M_1=L(1-\rho_1)^{-1}>0$ and $M_2=L(1-2\beta_2-\rho_2)^{-1}>0$. Hence, [A6] is satisfied. Furthermore, from Lemmas \ref{H3lemma} and \ref{H2lemma}, for all $L>0$,
\begin{align*}
    \sup_{n\in\mathbb{N}}{\bf E}\Biggl[\biggl(\sup_{\theta\in\Theta}\frac{1}{n}
    \bigl|\partial^{3}_{\theta}{\bf H}_n(\theta)\bigr|\biggr)^{M_3}\Biggr]
    <\infty
\end{align*}
and
\begin{align*}
    \sup_{n\in\mathbb{N}}{\bf E}\Biggl[\biggl|n^{\beta_1}
    \biggl(\frac{1}{n}\partial^2_{\theta}{\bf H}_{n}(\theta_0)
    +{\bf I}(\theta_0)\biggr)\biggr|^{M_4}\Biggr]<\infty,
\end{align*}
where $M_3=L(\beta-\rho_1)^{-1}>0$ and 
\begin{align*}
    M_4=L\biggl(\frac{2\beta_1}{1-\alpha}-\rho_1\biggr)^{-1}>0.
\end{align*}
Therefore, [A1$^{\prime\prime}$] is also satisfied. Finally, ${\bf [B1]}$ implies 
[B1] and [B2]
in Theorem 3 (c) of Yoshida \cite{Yoshida(2011)}. This completes the proof.
\end{proof}
\begin{lemma}\label{nHlemma}
Suppose that $[{\bf{A1}}]$-$[{\bf{A4}}]$ hold. Then, for all $L>0$,
\begin{align*}
    \sup_{n\in\mathbb{N}}{\bf{E}}_{\mathbb{X}_n}\Biggl[\sup_{\theta\in\Theta}\biggl|\frac{1}{n}{\bf{H}}_{n}(\mathbb{X}_n,\theta)\biggr|^L\Biggr]<\infty.
\end{align*}
\end{lemma}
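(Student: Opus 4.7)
The plan is to reduce this bound to the estimates already established in the proof of Lemma \ref{H3lemma}. Because $\Theta$ is a bounded open subset of $\mathbb{R}^{q_m}$ and ${\bf{\Sigma}}_m(\theta)$ remains positive definite on the closed set $\bar{\Theta}_m$, both $\sup_{\theta\in\Theta}|{\bf{\Sigma}}(\theta)^{-1}|$ and $\sup_{\theta\in\Theta}|\log\det {\bf{\Sigma}}(\theta)|$ are finite. Hence, applying the elementary bound $|v^{\top}Av|\leq\sum_{k_1,k_2}|A_{k_1k_2}||v^{(k_1)}||v^{(k_2)}|$ together with ${\bf{1}}_{\{\cdot\}}\leq 1$ gives
\begin{align*}
    \sup_{\theta\in\Theta}\biggl|\frac{1}{n}{\bf{H}}_n(\theta)\biggr| \leq \frac{C}{nh_n}\sum_{i=1}^n\sum_{k_1=1}^p\sum_{k_2=1}^p \bigl|(\Delta_i^n X)^{(k_1)}\bigr|\bigl|(\Delta_i^n X)^{(k_2)}\bigr|{\bf{1}}_{\{|\Delta_i^n X|\leq Dh_n^{\rho}\}} + C,
\end{align*}
exactly matching the dominating sum that appears near the start of the proof of Lemma \ref{H3lemma}. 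It therefore suffices to control the $L$-th moment of the right-hand side uniformly in $n$.

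For each pair $(k_1,k_2)$, I would decompose the normalised inner sum as ${\bf{M}}_{n,k_1,k_2}+{\bf{R}}_{n,k_1,k_2}$ in precisely the notation introduced in the proof of Lemma \ref{H3lemma}, where ${\bf{M}}_{n,k_1,k_2}$ is the terminal value of the discrete-time martingale obtained by centering the truncated summands with respect to $(\mathcal{F}_i^n)$, and ${\bf{R}}_{n,k_1,k_2}$ is the corresponding predictable compensator. The estimates (\ref{M}) and (\ref{R}) established there already give $\sup_{n\in\mathbb{N}}{\bf{E}}[|{\bf{M}}_{n,k_1,k_2}|^L]<\infty$ and $\sup_{n\in\mathbb{N}}{\bf{E}}[|{\bf{R}}_{n,k_1,k_2}|^L]<\infty$ for every $L\geq 1$, and an application of Minkowski's inequality together with the trivial bound on the additive constant $C$ yields the claim. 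The substantive machinery — Burkholder's inequality combined with the moment estimate ${\bf{E}}[|\Delta_i^n X|^{2L}{\bf{1}}_{\{|\Delta_i^n X|\leq Dh_n^{\rho}\}}]\leq C_Lh_n^{2L\rho}$ from the jump-truncation, together with Lemma \ref{EX2abslemma} applied to the compensator — is already in place, so the only genuinely new ingredient here is the uniform-in-$\theta$ domination in the first display, which is immediate from the compactness of $\bar{\Theta}$ and the positive-definiteness of ${\bf{\Sigma}}(\theta)$. For this reason I do not expect any real obstacle; the argument is strictly simpler than that of Lemma \ref{H3lemma} because no differentiation of ${\bf{\Sigma}}(\theta)^{-1}$ or $\log\det{\bf{\Sigma}}(\theta)$ in $\theta$ is involved.
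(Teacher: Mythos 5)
Your argument is correct, but it takes a genuinely different route from the paper's. The paper proves the lemma by first establishing, in analogy with Lemma \ref{Ylemma}, the uniform moment bound $\sup_{n\in\mathbb{N}}{\bf{E}}_{\mathbb{X}_n}\bigl[\sup_{\theta\in\Theta}\bigl|\tfrac{1}{n}{\bf{H}}_{n}(\mathbb{X}_n,\theta)-{\bf{H}}(\theta)\bigr|^L\bigr]<\infty$; that is, it centres $\tfrac{1}{n}{\bf{H}}_n$ at its deterministic limit ${\bf{H}}(\theta)=-\tfrac12\tr({\bf{\Sigma}}(\theta)^{-1}{\bf{\Sigma}}_0)-\tfrac12\log\det{\bf{\Sigma}}(\theta)$, decomposes the difference into the martingale terms ${\bf{M}}^{\dagger}_{1,n,k_1,k_2}$, ${\bf{M}}^{\dagger}_{2,n}$ and the compensator terms ${\bf{R}}^{\dagger}_{1,n,k_1,k_2}$, ${\bf{R}}^{\dagger}_{2,n}$, and then concludes by the triangle inequality using $\sup_{\theta\in\Theta}|{\bf{H}}(\theta)|<\infty$. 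You bypass the limit function entirely: you dominate $\sup_{\theta\in\Theta}|\tfrac{1}{n}{\bf{H}}_n(\theta)|$ by the $\theta$-free quantity $C(nh_n)^{-1}\sum_{i}\sum_{k_1,k_2}|(\Delta_i^nX)^{(k_1)}||(\Delta_i^nX)^{(k_2)}|{\bf{1}}_{\{|\Delta_i^nX|\leq Dh_n^{\rho}\}}+C$ and reuse the bounds (\ref{M}) and (\ref{R}) on ${\bf{M}}_{n,k_1,k_2}$ and ${\bf{R}}_{n,k_1,k_2}$ already proved inside Lemma \ref{H3lemma}. Both arguments rest on the same machinery (Burkholder's inequality plus Lemma \ref{EX2abslemma} for the compensator), so yours is legitimate and arguably more economical, since no centring at ${\bf{H}}(\theta)$ is needed and the indicator average is simply bounded by one; the paper's route instead recycles the structure of Lemma \ref{Ylemma}, which it needs elsewhere anyway. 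The only points worth making explicit in your write-up are (i) the domination step uses the continuity of $\theta\mapsto{\bf{\Sigma}}(\theta)^{-1}$ and $\theta\mapsto\log\det{\bf{\Sigma}}(\theta)$ on the compact set $\bar{\Theta}$, guaranteed by the standing assumption that ${\bf{\Sigma}}(\theta)$ is positive definite on $\bar{\Theta}$, and (ii) for $0<L<1$ one reduces to $L\geq 1$ by Jensen's inequality before applying Minkowski; neither is an obstacle.
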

\begin{proof}
In an analogous manner to Lemma \ref{Ylemma}, it is shown that
\begin{align}
    \sup_{n\in\mathbb{N}}{\bf{E}}_{\mathbb{X}_n}\Biggl[\sup_{\theta\in\Theta}\biggl|\frac{1}{n}{\bf{H}}_{n}(\mathbb{X}_n,\theta)-{\bf{H}}(\theta)\biggr|^L\Biggr]<\infty \label{EH}
\end{align}
for all $L>0$. Since
\begin{align*}
    \biggl|\frac{1}{n}{\bf{H}}_{n}(\mathbb{X}_n,\theta)\biggr|^L\leq C_L\biggl|\frac{1}{n}{\bf{H}}_{n}(\mathbb{X}_n,\theta)-{\bf{H}}(\theta)\biggr|^L+C_L\bigl|{\bf{H}}(\theta)\bigr|^L,
\end{align*}
we have
\begin{align*}
    \sup_{\theta\in\Theta}\biggl|\frac{1}{n}{\bf{H}}_{n}(\mathbb{X}_n,\theta)\biggr|^L&\leq C_L\sup_{\theta\in\Theta}\biggl|\frac{1}{n}{\bf{H}}_{n}(\mathbb{X}_n,\theta)-{\bf{H}}(\theta)\biggr|^L+C_L\sup_{\theta\in\Theta}\bigl|{\bf{H}}(\theta)\bigr|^L\\
    &\leq C_L\sup_{\theta\in\Theta}\biggl|\frac{1}{n}{\bf{H}}_{n}(\mathbb{X}_n,\theta)-{\bf{H}}(\theta)\biggr|^L+C_L.
\end{align*}
Hence, it follows from (\ref{EH}) that
\begin{align*}
    {\bf{E}}_{\mathbb{X}_n}\Biggl[\sup_{\theta\in\Theta}\biggl|\frac{1}{n}{\bf{H}}_{n}(\mathbb{X}_n,\theta)\biggr|^L\Biggr]&\leq C_L{\bf{E}}_{\mathbb{X}_n}\Biggl[\sup_{\theta\in\Theta}\biggl|\frac{1}{n}{\bf{H}}_{n}(\mathbb{X}_n,\theta)-{\bf{H}}(\theta)\biggr|^L\Biggr]+C_L\\
    &\leq C_L\sup_{n\in\mathbb{N}}{\bf{E}}_{\mathbb{X}_n}\Biggl[\sup_{\theta\in\Theta}\biggl|\frac{1}{n}{\bf{H}}_{n}(\mathbb{X}_n,\theta)-{\bf{H}}(\theta)\biggr|^L\Biggr]+C_L<\infty
\end{align*}
for all $L\geq 1$, which completes the proof.
\end{proof}
\begin{lemma}\label{problemma}
Suppose that $[{\bf{A1}}]$-$[{\bf{A4}}]$ hold. Then, for all $L>0$,
\begin{align*}
    \frac{1}{\sqrt{n}}\partial_{\theta}{\bf{H}}_{n}(\mathbb{X}_n,\theta_{0})\stackrel{d}{\longrightarrow}{\bf{I}}(\theta_0)^{1/2}Z_q
\end{align*}
and
\begin{align*}
    -\frac{1}{n}\partial^2_{\theta}{\bf{H}}_n(\mathbb{X}_n,\theta_0)\stackrel{p}{\longrightarrow}{\bf{I}}(\theta_0)
\end{align*}
as $n\longrightarrow\infty$.
\end{lemma}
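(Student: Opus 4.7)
The plan is to treat the two assertions separately. The Hessian convergence is immediate from Lemma \ref{H2lemma}: fixing any $\varepsilon\in(0,1/4]$ and applying it with $L=1$, Markov's inequality gives, for every $\delta>0$,
\begin{align*}
{\bf P}\Bigl(\bigl|\tfrac{1}{n}\partial^2_\theta{\bf H}_n(\theta_0)+{\bf I}(\theta_0)\bigr|\geq \delta\Bigr)\leq \delta^{-1}n^{-\varepsilon}\sup_{n\in\mathbb N}{\bf E}\Bigl[n^{\varepsilon}\bigl|\tfrac{1}{n}\partial^2_\theta{\bf H}_n(\theta_0)+{\bf I}(\theta_0)\bigr|\Bigr]\longrightarrow 0,
\end{align*}
so $-\tfrac{1}{n}\partial^2_\theta{\bf H}_n(\theta_0)\stackrel{p}{\longrightarrow}{\bf I}(\theta_0)$.

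For the asymptotic normality, I use the notation of Lemma \ref{H1lemma} to write $\tfrac{1}{\sqrt n}\partial_\theta{\bf H}_n(\theta_0)=-\tfrac{1}{2\sqrt n}\sum_{i=1}^n K_{i,n}$ with $K_{i,n}=(K_{i,n}^{(1)},\ldots,K_{i,n}^{(q)})^\top$, and split off the conditional bias. By relation (\ref{K}), ${\bf E}[K_{i,n}\mid\mathcal F^n_{i-1}]=R_{i-1}(h_n,\xi,\delta,\varepsilon,\zeta)$, so using $nh_n=T$ the compensator piece is $O_p(\sqrt n\,h_n)=O_p(n^{-1/2})\stackrel{p}{\longrightarrow} 0$. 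The remaining martingale-difference array
\begin{align*}
\Xi_{i,n}=-\tfrac{1}{2\sqrt n}\bigl(K_{i,n}-{\bf E}[K_{i,n}\mid\mathcal F^n_{i-1}]\bigr)
\end{align*}
is then handled by the discrete-time martingale central limit theorem together with the Cram\'er--Wold device.

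The Lindeberg condition is essentially trivial: on the event $\{|\Delta_i^n X|\leq Dh_n^\rho\}$ one has the deterministic estimate $|K_{i,n}|\leq Ch_n^{2\rho-1}$, and outside this event $K_{i,n}=0$, hence $\max_i|\Xi_{i,n}|\leq Cn^{1/2-2\rho}\to 0$ because $\rho\geq 3/8$. The main obstacle is the identification of the quadratic variation
\begin{align*}
\sum_{i=1}^n{\bf E}\bigl[\Xi_{i,n}\Xi_{i,n}^\top\bigm|\mathcal F^n_{i-1}\bigr]\stackrel{p}{\longrightarrow}{\bf I}(\theta_0).
\end{align*}
This requires a refinement of Lemma \ref{Elemma} giving the leading $h_n^2$ term of the truncated fourth conditional moment of $\Delta_i^n X$ in Wick-pairing form, namely ${\bf \Sigma}(\theta_0)_{j_1j_2}{\bf \Sigma}(\theta_0)_{j_3j_4}+{\bf \Sigma}(\theta_0)_{j_1j_3}{\bf \Sigma}(\theta_0)_{j_2j_4}+{\bf \Sigma}(\theta_0)_{j_1j_4}{\bf \Sigma}(\theta_0)_{j_2j_3}$, obtainable by It\^o--Taylor expansion of the continuous part of $X$ together with the jump-truncation estimates of Kusano and Uchida \cite{Kusano(jump)}. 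Substituting this expansion into $K_{i,n}^{(j_1)}K_{i,n}^{(j_2)}$, using $\partial_{\theta^{(j)}}\log\det{\bf \Sigma}(\theta_0)=\tr({\bf \Sigma}(\theta_0)^{-1}\partial_{\theta^{(j)}}{\bf \Sigma}(\theta_0))$ to cancel the cross terms against (\ref{EXh}), and applying standard trace manipulations yields the limit $\tfrac{1}{2}\tr\bigl({\bf \Sigma}(\theta_0)^{-1}\partial_{\theta^{(j_1)}}{\bf \Sigma}(\theta_0){\bf \Sigma}(\theta_0)^{-1}\partial_{\theta^{(j_2)}}{\bf \Sigma}(\theta_0)\bigr)$, which equals ${\bf I}(\theta_0)_{j_1j_2}$ via the duplication-matrix identity $(\mathbb D_p^+({\bf \Sigma}(\theta_0)\otimes{\bf \Sigma}(\theta_0))\mathbb D_p^{+\top})^{-1}=\mathbb D_p^\top({\bf \Sigma}(\theta_0)^{-1}\otimes{\bf \Sigma}(\theta_0)^{-1})\mathbb D_p$ together with $\mathbb D_p\Delta_0=\partial_\theta\vec{\bf \Sigma}(\theta_0)$.
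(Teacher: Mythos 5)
The paper does not actually prove this lemma in-house: its ``proof'' is a one-line citation to Theorem 2 of Kusano and Uchida \cite{Kusano(jump)}, so your writeup is a genuine reconstruction rather than a paraphrase. Your argument for the Hessian is correct and clean: Lemma \ref{H2lemma} with any fixed $\varepsilon$ and $L=1$ plus Markov's inequality does give convergence in probability. Your score argument is also structurally right: the conditional-bias term is $O_p(\sqrt{n}\,h_n)=O_p(n^{-1/2})$ by (\ref{K}), the Lindeberg condition holds for the trivial reason you give (on the truncation event $|K_{i,n}^{(j)}|\leq C h_n^{2\rho-1}+C$, the constant coming from the $\partial_{\theta^{(j)}}\log\det{\bf\Sigma}(\theta_0)$ term being dominated by $h_n^{2\rho-1}=Cn^{1-2\rho}$, so $\max_i|\Xi_{i,n}|\leq Cn^{1/2-2\rho}\to 0$ since $\rho\geq 3/8$), and since the limiting conditional variance is deterministic the array MCLT applies without a nesting condition on $(\mathcal{F}_i^n)$. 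I also checked your variance identification: the Wick expansion gives $\lim\sum_i{\bf E}[\Xi_{i,n}^{(j_1)}\Xi_{i,n}^{(j_2)}\mid\mathcal{F}_{i-1}^n]=\tfrac12\tr({\bf\Sigma}(\theta_0)^{-1}\partial_{\theta^{(j_1)}}{\bf\Sigma}(\theta_0){\bf\Sigma}(\theta_0)^{-1}\partial_{\theta^{(j_2)}}{\bf\Sigma}(\theta_0))$ after the $c_{j_1}c_{j_2}$ cancellations, and the duplication-matrix identity you quote correctly converts this into ${\bf I}(\theta_0)_{j_1j_2}$. The one substantive gap --- and it is exactly the step the paper hides inside its citation --- is the refined fourth-moment expansion: Lemma \ref{Elemma} as stated in this paper only gives the order bound $R_{i-1}(h_n^2,\xi,\delta,\varepsilon,\zeta)$ for the truncated fourth conditional moment, which is enough for the $L^p$ estimates elsewhere but not for identifying the leading $h_n^2$ coefficient in Wick-pairing form; without that coefficient the conditional variance cannot be computed. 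You correctly name what is needed and how it would be obtained (It\^o--Taylor expansion of the continuous part plus the jump-truncation estimates of Lemmas \ref{Clemma}--\ref{Dlemma} to show the jump contributions are of strictly higher order), but you assert it rather than prove it, so your proof is complete only modulo that external ingredient --- the same ingredient the paper itself imports from Theorem 2 of \cite{Kusano(jump)}.
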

\begin{proof}
    The results can be shown in a similar manner to the proof of Theorem 2 in Kusano and Uchida \cite{Kusano(jump)}.
\end{proof}
\begin{proof}[\textbf{Proof of Theorem \ref{QAICtheorem}}]
The following decomposition is considered:
\begin{align*}
    {\bf{E}}_{\mathbb{X}_n}\Bigl[{\bf{H}}_{n}(\mathbb{X}_n,\hat{\theta}_{n})\Bigr]-{\bf{E}}_{\mathbb{X}_n}\Bigl[{\bf{E}}_{\mathbb{Z}_n}\Bigl[{\bf{H}}_{n}(\mathbb{Z}_n,\hat{\theta}_{n})\Bigr]\Bigr]
    &=H_{1,n}+H_{2,n}+H_{3,n},
\end{align*}
where
\begin{align*}
    H_{1,n}&={\bf{E}}_{\mathbb{X}_n}\Bigl[{\bf{H}}_{n}(\mathbb{X}_n,\hat{\theta}_{n})\Bigr]-{\bf{E}}_{\mathbb{X}_n}\Bigl[{\bf{H}}_{n}(\mathbb{X}_n,\theta_0)\Bigr],\\
    H_{2,n}&={\bf{E}}_{\mathbb{X}_n}\Bigr[{\bf{H}}_{n}(\mathbb{X}_n,\theta_0)\Bigr]
    -{\bf{E}}_{\mathbb{Z}_n}\Bigl[{\bf{H}}_{n}(\mathbb{Z}_n,\theta_0)\Bigr],\\
    H_{3,n}&={\bf{E}}_{\mathbb{Z}_n}\Bigl[{\bf{H}}_{n}(\mathbb{Z}_n,\theta_0)\Bigr]-
    {\bf{E}}_{\mathbb{X}_n}\Bigl[{\bf{E}}_{\mathbb{Z}_n}\Bigl[{\bf{H}}_{n}(\mathbb{Z}_n,\hat{\theta}_{n})\Bigr]\Bigr].
\end{align*}
$\mathbb{X}_n$ and $\mathbb{Z}_n$ have the same distribution, which implies
\begin{align*}
    H_{2,n}=0.
\end{align*}
Consequently, to show 
\begin{align*}
    {\bf{E}}_{\mathbb{X}_n}\Bigl[{\bf{H}}_{n}(\mathbb{X}_n,\hat{\theta}_{n})\Bigr]-{\bf{E}}_{\mathbb{X}_n}\Bigl[{\bf{E}}_{\mathbb{Z}_n}\Bigl[{\bf{H}}_{n}(\mathbb{Z}_n,\hat{\theta}_{n})\Bigr]\Bigr]=q+o(1)
\end{align*}
as $n\longrightarrow\infty$, it is sufficient to prove
\begin{align}
    H_{1,n} &\longrightarrow \frac{q}{2} \label{H1}
\end{align}
and
\begin{align}
    H_{3,n} &\longrightarrow \frac{q}{2} \label{H3}
\end{align}
as $n\longrightarrow\infty$.\\
\ \\
Proof of (\ref{H1}).
Since $\Theta$ is  an open subset of $\mathbb{R}^q$ and $\theta_0\in\Theta$, there exists $\rho'>0$ such that
\begin{align*}
    B(\theta_0,\rho')\subset\Theta,
\end{align*}
where $B(\theta_0,\rho')$ denotes the open ball centered at $\theta_0$ with radius $\rho'$:
\begin{align*}
    B(\theta_0,\rho')=\bigl\{\theta\in\mathbb{R}^q:|\theta-\theta_0|<\rho'\bigr\}.
\end{align*}
The event $B_n$ is defined as
\begin{align*}
    B_n=\Bigl\{|\hat{\theta}_n-\theta_0|\leq 2^{-1}n^{-1/4}\rho'\Bigr\}.
\end{align*}
Note that $\hat{\theta}_n\in B(\theta_0,\rho')$ on $B_n$. From Proposition \ref{moment}, for all $L>0$, there exists $C_L>0$ such that
\begin{align*}
    {\bf{P}}\Bigl(\bigl|\sqrt{n}(\hat{\theta}_{n}-\theta_{0})\bigr|>r\Bigr)\leq \frac{C_L}{r^L}
\end{align*}
for all $r>0$ and $n\in\mathbb{N}$, which implies
\begin{align}
\begin{split}
    {\bf{P}}\bigl(B_n^c\bigr)&={\bf{P}}\Bigl(|\hat{\theta}_n-\theta_0|>2^{-1}n^{-1/4}\rho'\Bigr)\\
    &={\bf{P}}\Bigl(\bigl|\sqrt{n}(\hat{\theta}_n-\theta_0)\bigr|>2^{-1} n^{1/4}\rho'\Bigr)\leq\frac{C}{n^4}.
\end{split}\label{Bnc}
\end{align}
Using Taylor's theorem, we have
\begin{align}
    {\bf{H}}_{n}(\mathbb{X}_n,\hat{\theta}_{n})-{\bf{H}}_{n}(\mathbb{X}_n,\theta_0)
    &=H'_{1,n}+H'_{2,n}+H'_{3,n} \label{TaylorH1}
\end{align}
on $B_n$, where $\bar{\theta}_{n,\lambda}=\theta_0+\lambda(\hat{\theta}_n-\theta_0)$,
$\hat{u}_n^{(j)} = \sqrt{n} (\hat{\theta}_n^{(j)} - \theta_0^{(j)})$ and
\begin{align*}
    H'_{1,n}&=\sum_{j_1=1}^{q}\biggl(\frac{1}{\sqrt{n}}\partial_{\theta^{(j_1)}}{\bf{H}}_{n}(\mathbb{X}_n,\theta_{0})\biggr)\hat{u}^{(j_1)}_{n},\\
    H'_{2,n}&=\frac{1}{2}\sum_{j_1=1}^q\sum_{j_2=1}^q\biggl(\frac{1}{n}\partial_{\theta^{(j_1)}}\partial_{\theta^{(j_2)}}
    {\bf{H}}_{n}(\mathbb{X}_n,\theta_{0})\biggr)\hat{u}^{(j_1)}_{n}\hat{u}^{(j_2)}_{n},\\
    H'_{3,n}&=\frac{1}{2}\sum_{j_1=1}^q\sum_{j_2=1}^q\sum_{j_3=1}^q
    \biggl(\frac{1}{n\sqrt{n}}\int_0^1(1-\lambda)^2\partial_{\theta^{(j_1)}}\partial_{\theta^{(j_2)}}\partial_{\theta^{(j_3)}}
    {\bf{H}}_{n}(\mathbb{X}_n,\bar{\theta}_{n,\lambda})d\lambda\biggr)
    \hat{u}^{(j_1)}_{n}\hat{u}^{(j_2)}_{n}\hat{u}^{(j_3)}_{n}.
\end{align*}
For all $\lambda\in(0,1)$, we see
\begin{align*}
    |\bar{\theta}_{n,\lambda}-\theta_0|=\lambda|\hat{\theta}_n-\theta_0|\leq |\hat{\theta}_n-\theta_0|\leq 2^{-1}n^{-1/4}\rho'<\rho' 
\end{align*}
on $B_n$, which yields that
\begin{align*}
    \bar{\theta}_{n,\lambda}\in B(\theta_0,\rho')\subset\Theta
\end{align*}
on $B_n$. Hence, the expansion (\ref{TaylorH1}) is well defined on $B_n$. Set
\begin{align*}
    \tilde{H}'_{3,n}=\left\{
    \begin{alignedat}{2}   
    &H'_{3,n} &\quad\bigl(\mbox{on}\ B_{n}\bigr), \\
    &1 &\bigl(\mbox{on}\ B_{n}^c\bigr).
  \end{alignedat} 
  \right.
\end{align*}
The following expression is obtained:
\begin{align*}
    \Bigl\{{\bf{H}}_{n}(\mathbb{X}_n,\hat{\theta}_{n})-{\bf{H}}_{n}(\mathbb{X}_n,\theta_0)\Bigr\}{\bf{1}}_{B_n}=H'_{1,n}{\bf{1}}_{B_n}+H'_{2,n}{\bf{1}}_{B_n}+\tilde{H}'_{3,n}{\bf{1}}_{B_n}.
\end{align*}
Therefore, it is shown that
\begin{align*}
    H_{1,n}&={\bf{E}}_{\mathbb{X}_n}\Bigl[H'_{1,n}{\bf{1}}_{B_n}\Bigr]+{\bf{E}}_{\mathbb{X}_n}\Bigl[H'_{2,n}{\bf{1}}_{B_n}\Bigr]\\
    &\qquad+{\bf{E}}_{\mathbb{X}_n}\Bigl[\tilde{H}'_{3,n}{\bf{1}}_{B_n}\Bigr]+{\bf{E}}_{\mathbb{X}_n}\Bigl[\Bigl\{{\bf{H}}_{n}(\mathbb{X}_n,\hat{\theta}_{n})-{\bf{H}}_{n}(\mathbb{X}_n,\theta_0)\Bigr\}{\bf{1}}_{B_n^c}\Bigr].
\end{align*}
First, we show 
\begin{align}
    {\bf{E}}_{\mathbb{X}_n}\Bigl[H'_{1,n}{\bf{1}}_{B_n}\Bigr]\longrightarrow q \label{H1d}
\end{align}
as $n\longrightarrow\infty$. By Taylor's theorem, it holds that
\begin{align*}
    0&=\frac{1}{\sqrt{n}}\partial_{\theta^{(j_1)}}{\bf{H}}_{n}(\mathbb{X}_n,\hat{\theta}_{n})\\
    &=
    \frac{1}{\sqrt{n}}\partial_{\theta^{(j_1)}}{\bf{H}}_{n}(\mathbb{X}_n,\theta_0)+\sum_{j_2=1}^q\biggl(\frac{1}{n}\partial_{\theta^{(j_1)}}\partial_{\theta^{(j_2)}}{\bf{H}}_{n}(\mathbb{X}_n,\theta_0)\biggr)\hat{u}_n^{(j_2)}\\
    &\qquad+\sum_{j_2=1}^q\sum_{j_3=1}^q\biggl(\frac{1}{n\sqrt{n}}\int_{0}^{1}(1-\lambda)\partial_{\theta^{(j_1)}}\partial_{\theta^{(j_2)}}\partial_{\theta^{(j_3)}}{\bf{H}}_{n}(\mathbb{X}_n,\bar{\theta}_{n,\lambda})d\lambda\biggr)\hat{u}_n^{(j_2)}\hat{u}_n^{(j_3)}
\end{align*}
for $j_1=1,\ldots,q$ on $B_n$, so that 
\begin{align*}
    \frac{1}{\sqrt{n}}\partial_{\theta^{(j_1)}}{\bf{H}}_{n}(\mathbb{X}_n,\theta_0){\bf{1}}_{B_n}=
    \sum_{j_2=1}^q {\bf{I}}(\theta_0)_{j_1j_2}\hat{u}^{(j_2)}_n{\bf{1}}_{B_n}-R_{n}^{(j_1)}{\bf{1}}_{B_n},
\end{align*}
where 
\begin{align*}
    R_{n}^{(j_1)}=\left\{
    \begin{alignedat}{2}   
    \begin{split}
    &\sum_{j_2=1}^q\biggl(\frac{1}{n}\partial_{\theta^{(j_1)}}\partial_{\theta^{(j_2)}}{\bf{H}}_{n}(\mathbb{X}_n,\theta_0)+{\bf{I}}(\theta_0)_{j_1j_2}\biggr)\hat{u}^{(j_2)}_n \\
    &\quad+\sum_{j_2=1}^q\sum_{j_3=1}^q\biggl(\frac{1}{n\sqrt{n}}\int_{0}^{1}(1-\lambda)\partial_{\theta^{(j_1)}}\partial_{\theta^{(j_2)}}\partial_{\theta^{(j_3)}}{\bf{H}}_{n}(\mathbb{X}_n,\bar{\theta}_{n,\lambda})d\lambda\biggr)\hat{u}_n^{(j_2)}\hat{u}_n^{(j_3)}
    \end{split} &\quad\bigl(\mbox{on}\ B_{n}\bigr), \\
    &1
    &\quad\bigl(\mbox{on}\ B_{n}^c\bigr).
  \end{alignedat} 
  \right.
\end{align*}
Consequently, one gets
\begin{align}
\begin{split}
    H'_{1,n}{\bf{1}}_{B_n}&=\sum_{j_1=1}^{q}\biggl(\frac{1}{\sqrt{n}}\partial_{\theta^{(j_1)}}{\bf{H}}_{n}(\mathbb{X}_n,\theta_{0}){\bf{1}}_{B_n}\biggr)\hat{u}^{(j_1)}_{n}\\
    &=\sum_{j_1=1}^q\sum_{j_2=1}^q {\bf{I}}(\theta_0)_{j_1j_2}\hat{u}^{(j_1)}_n\hat{u}^{(j_2)}_n{\bf{1}}_{B_n}-\sum_{j_1=1}^q{R}_{n}^{(j_1)}{\bf{1}}_{B_n}\hat{u}^{(j_1)}_{n}\\
    &=\hat{u}_n^{\top}{\bf{I}}(\theta_0)\hat{u}_n{\bf{1}}_{B_n}-\sum_{j_1=1}^q{R}_{n}^{(j_1)}{\bf{1}}_{B_n}\hat{u}^{(j_1)}_{n}.
\end{split}\label{H1d-1}
\end{align}
Since we can evaluate $R_{n}^{(j_1)}$ by
\begin{align*}
    |R_{n}^{(j_1)}|&\leq \sum_{j_2=1}^q\biggl|\frac{1}{n}\partial_{\theta^{(j_1)}}\partial_{\theta^{(j_2)}}{\bf{H}}_{n}(\mathbb{X}_n,\theta_0)+{\bf{I}}(\theta_0)_{j_1j_2}\biggr|\bigl|\hat{u}^{(j_2)}_n\bigr|\\
    &\qquad+\frac{1}{\sqrt{n}}\sum_{j_2=1}^q\sum_{j_3=1}^q\biggl(\frac{1}{n}\int_{0}^{1}(1-\lambda)\Bigl|\partial_{\theta^{(j_1)}}\partial_{\theta^{(j_2)}}\partial_{\theta^{(j_3)}}{\bf{H}}_{n}(\mathbb{X}_n,\bar{\theta}_{n,\lambda})\Bigr|d\lambda\biggr)\bigl|\hat{u}^{(j_2)}_n\bigr|\bigl|\hat{u}^{(j_3)}_n\bigr|\\
    &\leq \frac{1}{n^{1/4}}\sum_{j_2=1}^q \biggl(n^{1/4}\biggl|\frac{1}{n}\partial_{\theta^{(j_1)}}\partial_{\theta^{(j_2)}}{\bf{H}}_{n}(\mathbb{X}_n,\theta_0)+{\bf{I}}(\theta_0)_{j_1j_2}\biggl|\, \biggr)\bigl|\hat{u}^{(j_2)}_n\bigr|\\
    &\qquad+\frac{1}{\sqrt{n}}\sum_{j_2=1}^q\sum_{j_3=1}^q
    \biggl(\frac{1}{n}\sup_{\theta\in\Theta}
    \Bigl|\partial_{\theta^{(j_1)}}\partial_{\theta^{(j_2)}}\partial_{\theta^{(j_3)}}{\bf{H}}_{n}(\mathbb{X}_n,\theta)\Bigr|\biggr)\bigl|\hat{u}^{(j_2)}_n\bigr|\bigl|\hat{u}^{(j_3)}_n\bigr|
\end{align*}
on $B_n$, we obtain
\begin{align*}
    |R_{n}^{(j_1)}|{\bf{1}}_{B_n}&\leq \frac{1}{n^{1/4}}\sum_{j_2=1}^q \biggl(n^{1/4}\biggl|\frac{1}{n}\partial_{\theta^{(j_1)}}\partial_{\theta^{(j_2)}}{\bf{H}}_{n}(\mathbb{X}_n,\theta_0)+{\bf{I}}(\theta_0)_{j_1j_2}\biggl|\, \biggr)\bigl|\hat{u}^{(j_2)}_n\bigr|{\bf{1}}_{B_n}\\
    &\qquad+\frac{1}{\sqrt{n}}\sum_{j_2=1}^q\sum_{j_3=1}^q
    \biggl(\frac{1}{n}\sup_{\theta\in\Theta}
    \Bigl|\partial_{\theta^{(j_1)}}\partial_{\theta^{(j_2)}}\partial_{\theta^{(j_3)}}{\bf{H}}_{n}(\mathbb{X}_n,\theta)\Bigr|\biggr)\bigl|\hat{u}^{(j_2)}_n\bigr|\bigl|\hat{u}^{(j_3)}_n\bigr|{\bf{1}}_{B_n}\\
    &\leq \frac{1}{n^{1/4}}\sum_{j_2=1}^q \biggl(n^{1/4}\biggl|\frac{1}{n}\partial_{\theta^{(j_1)}}\partial_{\theta^{(j_2)}}{\bf{H}}_{n}(\mathbb{X}_n,\theta_0)+{\bf{I}}(\theta_0)_{j_1j_2}\biggl|\, \biggr)\bigl|\hat{u}^{(j_2)}_n\bigr|\\
    &\qquad+\frac{1}{\sqrt{n}}\sum_{j_2=1}^q\sum_{j_3=1}^q
    \biggl(\frac{1}{n}\sup_{\theta\in\Theta}
    \Bigl|\partial_{\theta^{(j_1)}}\partial_{\theta^{(j_2)}}\partial_{\theta^{(j_3)}}{\bf{H}}_{n}(\mathbb{X}_n,\theta)\Bigr|\biggr)\bigl|\hat{u}^{(j_2)}_n\bigr|\bigl|\hat{u}^{(j_3)}_n\bigr|.
\end{align*}
Thus, by the Cauchy-Schwarz inequality, one has
\begin{align*}
    &\quad\ \Biggl|{\bf{E}}\Biggl[\sum_{j_1=1}^q{R}_{n}^{(j_1)}{\bf{1}}_{B_n}\hat{u}^{(j_1)}_{n}\Biggr]\Biggr|\\
    &\leq \sum_{j_1=1}^q
    {\bf{E}}\Bigl[|R_{n}^{(j_1)}|{\bf{1}}_{B_n}\bigl|\hat{u}^{(j_1)}_n\bigr|\Bigr]\\
    &\leq \frac{1}{n^{1/4}}\sum_{j_1=1}^q\sum_{j_2=1}^q{\bf{E}}\biggl[\biggl(n^{1/4}\biggl|\frac{1}{n}\partial_{\theta^{(j_1)}}\partial_{\theta^{(j_2)}}{\bf{H}}_{n}(\mathbb{X}_n,\theta_0)+{\bf{I}}(\theta_0)_{j_1j_2}\biggl|\, \biggr)\bigl|\hat{u}^{(j_1)}_n\bigr|\bigl|\hat{u}^{(j_2)}_n\bigr|\biggr]\\
    &\qquad+\frac{1}{\sqrt{n}}\sum_{j_1=1}^q\sum_{j_2=1}^q\sum_{j_3=1}^q{\bf{E}}\biggl[
    \biggl(\frac{1}{n}\sup_{\theta\in\Theta}
    \Bigl|\partial_{\theta^{(j_1)}}\partial_{\theta^{(j_2)}}\partial_{\theta^{(j_3)}}{\bf{H}}_{n}(\mathbb{X}_n,\theta)\Bigr|\biggr)\bigl|\hat{u}^{(j_1)}_n\bigr|\bigl|\hat{u}^{(j_2)}_n\bigr|\bigl|\hat{u}^{(j_3)}_n\bigr|\Biggr]\\
    &\leq\frac{1}{n^{1/4}}\sum_{j_1=1}^q\sum_{j_2=1}^q{\bf{E}}\Biggl[\biggl(n^{1/4}\biggl|\frac{1}{n}\partial_{\theta^{(j_1)}}\partial_{\theta^{(j_2)}}{\bf{H}}_{n}(\mathbb{X}_n,\theta_0)+{\bf{I}}(\theta_0)_{j_1j_2}\biggl|\biggr)^2\Biggr]^{1/2}\\
    &\qquad\qquad\qquad\qquad\qquad\qquad\qquad\qquad\qquad\qquad
    \times
    {\bf{E}}\Bigl[\bigl|\hat{u}^{(j_1)}_n\bigr|^4\Bigr]^{1/4}{\bf{E}}\Bigl[\bigl|\hat{u}^{(j_2)}_n\bigr|^4\Bigr]^{1/4}\\
    &\qquad+\frac{1}{\sqrt{n}}\sum_{j_1=1}^q\sum_{j_2=1}^q\sum_{j_3=1}^q{\bf{E}}\Biggl[
    \biggl(\frac{1}{n}\sup_{\theta\in\Theta}
    \Bigl|\partial_{\theta^{(j_1)}}\partial_{\theta^{(j_2)}}\partial_{\theta^{(j_3)}}{\bf{H}}_{n}(\mathbb{X}_n,\theta)\Bigr|\biggr)^4\Biggr]^{1/4}\\
    &\qquad\qquad\qquad\qquad\qquad\qquad\qquad\quad
    \times{\bf{E}}\Bigl[\bigl|\hat{u}^{(j_1)}_n\bigr|^4\Bigr]^{1/4}
    {\bf{E}}\Bigl[\bigl|\hat{u}^{(j_2)}_n\bigr|^4\Bigr]^{1/4}
    {\bf{E}}\Bigl[\bigl|\hat{u}^{(j_3)}_n\bigr|^4\Bigr]^{1/4}.
\end{align*}
It follows from Proposition \ref{moment}, Lemmas \ref{H3lemma} and \ref{H2lemma} that
\begin{align*}
    \sup_{n\in\mathbb{N}}{\bf{E}}\Biggl[\biggl(n^{1/4}\biggl|\frac{1}{n}\partial_{\theta^{(j_1)}}\partial_{\theta^{(j_2)}}{\bf{H}}_{n}(\mathbb{X}_n,\theta_0)+{\bf{I}}(\theta_0)_{j_1j_2}\biggl|\biggr)^2\Biggr]^{1/2}<\infty
\end{align*}
and 
\begin{align*}
    \sup_{n\in\mathbb{N}}{\bf{E}}\Biggl[
    \biggl(\frac{1}{n}\sup_{\theta\in\Theta}
    \Bigl|\partial_{\theta^{(j_1)}}\partial_{\theta^{(j_2)}}\partial_{\theta^{(j_3)}}{\bf{H}}_{n}(\mathbb{X}_n,\theta)\Bigr|\biggr)^4\Biggr]^{1/4}<\infty
    ,\quad
    \sup_{n\in\mathbb{N}}{\bf{E}}\Bigl[\bigl|\hat{u}^{(j_1)}_n\bigr|^4\Bigr]^{1/4}<\infty,
\end{align*}
so that 
\begin{align*}
    \Biggl|{\bf{E}}\Biggl[\sum_{j_1=1}^q{R}_{n}^{(j_1)}{\bf{1}}_{B_n}\hat{u}^{(j_1)}_{n}\Biggr]\Biggr|\leq \frac{C}{n^{1/4}}+\frac{C}{\sqrt{n}},
\end{align*}
which implies 
\begin{align}
    {\bf{E}}\Biggl[\sum_{j_1=1}^q{R}_{n}^{(j_1)}{\bf{1}}_{B_n}\hat{u}^{(j_1)}_{n}\Biggr]\longrightarrow 0 \label{H1d-2}
\end{align}
as $n\longrightarrow\infty$. By Proposition \ref{moment}, we have
\begin{align*}
    \begin{split}
     {\bf{E}}_{\mathbb{X}_n}\Bigl[\hat{u}_n^{\top}
    {\bf{I}}(\theta_0)\hat{u}_n\Bigr]
    &={\bf{E}}_{\mathbb{X}_n}\Bigl[f\bigl(\hat{u}_n\bigr)\Bigr]\\
    &\longrightarrow \mathbb{E}\biggl[f\Bigl({\bf{I}}(\theta_0)^{-1/2}Z_q\Bigr)\biggr]=\mathbb{E}\Bigl[Z_q^{\top}Z_q\Bigr]=q
    \end{split}
\end{align*}
as $n\longrightarrow\infty$, where $f(x)=x^{\top}{\bf{I}}(\theta_0)x$ for $x\in\mathbb{R}^q$. 
On the other hand, by using the Cauchy-Schwarz inequality, it holds from Proposition \ref{moment} and (\ref{Bnc}) that 
\begin{align*}
    \Bigl|{\bf{E}}_{\mathbb{X}_n}\Bigl[\hat{u}_n^{\top}
    {\bf{I}}(\theta_0)\hat{u}_n{\bf{1}}_{B_n^c}\Bigr]\Bigr|&\leq C
    {\bf{E}}_{\mathbb{X}_n}\Bigl[|\hat{u}_n|^2{\bf{1}}_{B_n^c}\Bigr]\\
    &\leq C{\bf{E}}_{\mathbb{X}_n}\Bigl[|\hat{u}_n|^4\Bigr]^{1/2}{\bf{E}}\Bigl[
    {\bf{1}}_{B_n^c}\Bigr]^{1/2}\\
    &\leq C\biggl(\sup_{n\in\mathbb{N}}{\bf{E}}_{\mathbb{X}_n}\Bigl[|\hat{u}_n|^4\Bigr]^{1/2}\biggr){\bf{P}}\bigl(B_n^c\bigr)^{1/2}\leq \frac{C}{n^2},
\end{align*}
which yields that
\begin{align*}
    {\bf{E}}_{\mathbb{X}_n}\Bigl[\hat{u}_n^{\top}
    {\bf{I}}(\theta_0)\hat{u}_n{\bf{1}}_{B_n^c}\Bigr]\longrightarrow 0
\end{align*}
as $n\longrightarrow\infty$. Consequently, we obtain
\begin{align}
\begin{split}
    &\quad\ {\bf{E}}_{\mathbb{X}_n}\Bigl[\hat{u}_n^{\top}
    {\bf{I}}(\theta_0)\hat{u}_n{\bf{1}}_{B_n}\Bigr]\\
    &={\bf{E}}_{\mathbb{X}_n}\Bigl[\hat{u}_n^{\top}
    {\bf{I}}(\theta_0)\hat{u}_n\Bigr]-{\bf{E}}_{\mathbb{X}_n}\Bigl[\hat{u}_n^{\top}
    {\bf{I}}(\theta_0)\hat{u}_n{\bf{1}}_{B_n^c}\Bigr]\longrightarrow q
    \label{H1d-3}
\end{split}
\end{align}
as $n\longrightarrow\infty$. 
Therefore, (\ref{H1d-1})-(\ref{H1d-3}) imply (\ref{H1d}). Next, we prove
\begin{align}
    {\bf{E}}_{\mathbb{X}_n}\Bigl[H'_{2,n}{\bf{1}}_{B_n}\Bigr]\longrightarrow -\frac{q}{2}\label{H2d}
\end{align}
as $n\longrightarrow\infty$. The following decomposition is considered:
\begin{align*}
    H'_{2,n}{\bf{1}}_{B_n}
    &=-\frac{1}{2}\sum_{j_1=1}^q\sum_{j_2=1}^q {\bf{I}}(\theta_0)_{j_1j_2}\hat{u}^{(j_1)}_n\hat{u}^{(j_2)}_n{\bf{1}}_{B_n}\\
    &\qquad+\frac{1}{2}\sum_{j_1=1}^q\sum_{j_2=1}^q\biggl(\frac{1}{n}\partial_{\theta^{(j_1)}}\partial_{\theta^{(j_2)}}
    {\bf{H}}_{n}(\mathbb{X}_n,\theta_{0})+{\bf{I}}(\theta_0)_{j_1j_2}\biggr)\hat{u}^{(j_1)}_n
    \hat{u}^{(j_2)}_n{\bf{1}}_{B_n}.
\end{align*}
As it follows from Proposition \ref{moment} and Lemma \ref{H2lemma} that
\begin{align*}
    &\quad\ \Biggl|{\bf{E}}_{\mathbb{X}_n}\Biggl[\biggl(\frac{1}{n}\partial_{\theta^{(j_1)}}\partial_{\theta^{(j_2)}}
    {\bf{H}}_{n}(\mathbb{X}_n,\theta_{0})+{\bf{I}}(\theta_0)_{j_1j_2}\biggr)\hat{u}^{(j_1)}_n
    \hat{u}^{(j_2)}_n{\bf{1}}_{B_n}\Biggr]\Biggr|\\
    &\leq \frac{1}{n^{1/4}}{\bf{E}}_{\mathbb{X}_n}\Biggl[\biggl|
    n^{1/4}\biggl(\frac{1}{n}\partial_{\theta^{(j_1)}}\partial_{\theta^{(j_2)}}
    {\bf{H}}_{n}(\mathbb{X}_n,\theta_{0})+{\bf{I}}(\theta_0)_{j_1j_2}\biggr)\biggr|\bigl|\hat{u}^{(j_1)}_n
    \bigr|\bigl|\hat{u}^{(j_2)}_n\bigr|\Biggr]\\
    & \leq \frac{1}{n^{1/4}} \sup_{n\in\mathbb{N}}
    {\bf{E}}_{\mathbb{X}_n}\Biggl[
    \biggl|
    n^{1/4}\biggl(\frac{1}{n}\partial_{\theta^{(j_1)}}\partial_{\theta^{(j_2)}}
    {\bf{H}}_{n}(\mathbb{X}_n,\theta_{0})+{\bf{I}}(\theta_0)_{j_1j_2}\biggr) 
    \biggr|^2\Biggr]^{1/2} \\
    &\qquad\qquad\qquad\qquad\qquad\qquad\times\sup_{n\in\mathbb{N}}{\bf{E}}_{\mathbb{X}_n}
    \Bigl[\bigl|\hat{u}_n^{(j_1)}\bigr|^{4}\Bigr]^{1/4}\sup_{n\in\mathbb{N}}{\bf{E}}_{\mathbb{X}_n}
    \Bigl[\bigl|\hat{u}_n^{(j_2)}\bigr|^{4}\Bigr]^{1/4}\leq\frac{C}{n^{1/4}}
\end{align*}
for $j_1,j_2=1,\ldots,q$, we obtain
\begin{align*}
    {\bf{E}}_{\mathbb{X}_n}\Biggl[\biggl(\frac{1}{n}\partial_{\theta^{(j_1)}}\partial_{\theta^{(j_2)}}
    {\bf{H}}_{n}(\mathbb{X}_n,\theta_{0})+{\bf{I}}(\theta_0)_{j_1j_2}\biggr)\hat{u}^{(j_1)}_n
    \hat{u}^{(j_2)}_n{\bf{1}}_{B_n}\Biggr]\longrightarrow 0
\end{align*}
as $n\longrightarrow\infty$. Thus, it holds from (\ref{H1d-3}) that
\begin{align*}
    &\quad\  {\bf{E}}_{\mathbb{X}_n}\Bigl[{H}'_{2,n}{\bf{1}}_{B_n}\Bigr]\\
    &=-\frac{1}{2}{\bf{E}}_{\mathbb{X}_n}\Bigl[\hat{u}_n^{\top}
    {\bf{I}}(\theta_0)\hat{u}_n{\bf{1}}_{B_n}\Bigr]\\
    &\quad+\frac{1}{2}\sum_{j_1=1}^q\sum_{j_2=1}^q
    {\bf{E}}_{\mathbb{X}_n}\Biggl[\biggl(\frac{1}{n}\partial_{\theta^{(j_1)}}\partial_{\theta^{(j_2)}}
    {\bf{H}}_{n}(\mathbb{X}_n,\theta_{0})+{\bf{I}}(\theta_0)_{j_1j_2}\biggr)\hat{u}^{(j_1)}_n
    \hat{u}^{(j_2)}_n{\bf{1}}_{B_n}\Biggr]\longrightarrow-\frac{q}{2}
\end{align*}
as $n\longrightarrow\infty$, which completes the proof of (\ref{H2d}). Moreover, we show
\begin{align}
    {\bf{E}}_{\mathbb{X}_n}\Bigl[\tilde{H}'_{3,n}{\bf{1}}_{B_n}\Bigr]\longrightarrow 0\label{H3d}
\end{align}
as $n\longrightarrow\infty$. On $B_n$, we have
\begin{align*}
    &\quad\ \bigl|\tilde{H}'_{3,n}\bigr|\\
    &\leq \frac{1}{\sqrt{n}}\sum_{j_1=1}^q\sum_{j_2=1}^q\sum_{j_3=1}^q\biggl(\frac{1}{n}\int_0^1(1-\lambda)^2\Bigl|\partial_{\theta^{(j_1)}}\partial_{\theta^{(j_2)}}\partial_{\theta^{(j_3)}}
    {\bf{H}}_{n}(\mathbb{X}_n,\bar{\theta}_{n,\lambda})\Bigr|d\lambda\biggr)
    \bigl|\hat{u}_n^{(j_1)}\bigr|\bigl|\hat{u}_n^{(j_2)}\bigr|\bigl|\hat{u}_n^{(j_3)}\bigr|\\
    &\leq \frac{1}{\sqrt{n}}\sum_{j_1=1}^q\sum_{j_2=1}^q\sum_{j_3=1}^q\biggl(\frac{1}{n}\sup_{\theta\in\Theta}\Bigr|
    \partial_{\theta^{(j_1)}}\partial_{\theta^{(j_2)}}\partial_{\theta^{(j_3)}}
    {\bf{H}}_{n}(\mathbb{X}_n,\theta)\Bigl|\biggr)
    \bigl|\hat{u}_n^{(j_1)}\bigr|\bigl|\hat{u}_n^{(j_2)}\bigr|\bigl|\hat{u}_n^{(j_3)}\bigr|,
\end{align*}
so that 
\begin{align*}
    \bigl|\tilde{H}'_{3,n}\bigr|{\bf{1}}_{B_n}
    &\leq\frac{1}{\sqrt{n}}\sum_{j_1=1}^q\sum_{j_2=1}^q\sum_{j_3=1}^q\biggl(\frac{1}{n}\sup_{\theta\in\Theta}\Bigr|
    \partial_{\theta^{(j_1)}}\partial_{\theta^{(j_2)}}\partial_{\theta^{(j_3)}}
    {\bf{H}}_{n}(\mathbb{X}_n,\theta)\Bigl|\biggr)
    \bigl|\hat{u}_n^{(j_1)}\bigr|\bigl|\hat{u}_n^{(j_2)}\bigr|\bigl|\hat{u}_n^{(j_3)}\bigr|.
\end{align*}
Hence, we see from Proposition \ref{moment}, Lemma \ref{H3lemma} and the Cauchy-Schwarz inequality that
\begin{align*}
    \Bigl|{\bf{E}}_{\mathbb{X}_n}\Bigl[\tilde{H}'_{3,n}{\bf{1}}_{B_n}\Bigr]\Bigr|
    &\leq{\bf{E}}_{\mathbb{X}_n}\Bigl[\bigl|\tilde{H}'_{3,n}\bigr|{\bf{1}}_{B_n}\Bigr]\\
    &\leq\frac{1}{\sqrt{n}}\sum_{j_1=1}^q\sum_{j_2=1}^q\sum_{j_3=1}^q\sup_{n\in\mathbb{N}}{\bf{E}}_{\mathbb{X}_n}\left[\biggl(\frac{1}{n}\sup_{\theta\in\Theta}\Bigr|
    \partial_{\theta^{(j_1)}}\partial_{\theta^{(j_2)}}\partial_{\theta^{(j_3)}}
    {\bf{H}}_{n}(\mathbb{X}_n,\theta)\Bigl|\biggr)^4\right]^{1/4}\\
    &\qquad
    \times\sup_{n\in\mathbb{N}}{\bf{E}}_{\mathbb{X}_n}
    \Bigl[\bigl|\hat{u}_n^{(j_1)}\bigr|^{4}\Bigr]^{1/4}\sup_{n\in\mathbb{N}}{\bf{E}}_{\mathbb{X}_n}
    \Bigl[\bigl|\hat{u}_n^{(j_2)}\bigr|^{4}\Bigr]^{1/4}\sup_{n\in\mathbb{N}}{\bf{E}}_{\mathbb{X}_n}
    \Bigl[\bigl|\hat{u}_n^{(j_3)}\bigr|^{4}\Bigr]^{1/4}\\
    &\longrightarrow 0
\end{align*}
as $n\longrightarrow\infty$, which yields (\ref{H3d}). Finally, we show
\begin{align}
    {\bf{E}}_{\mathbb{X}_n}\Bigl[\Bigl\{{\bf{H}}_{n}(\mathbb{X}_n,\hat{\theta}_{n})-{\bf{H}}_{n}(\mathbb{X}_n,\theta_0)\Bigr\}{\bf{1}}_{B_n^c}\Bigr]\longrightarrow 0 \label{H1dC}
\end{align}
as $n\longrightarrow\infty$. Note that
\begin{align*}
    &\quad\ \Bigl|{\bf{E}}_{\mathbb{X}_n}\Bigl[\Bigl\{{\bf{H}}_{n}(\mathbb{X}_n,\hat{\theta}_{n})-{\bf{H}}_{n}(\mathbb{X}_n,\theta_0)\Bigr\}{\bf{1}}_{B_n^c}\Bigr]\Bigr|\\
    &\leq n{\bf{E}}_{\mathbb{X}_n}\Biggl[\biggl|\frac{1}{n}{\bf{H}}_{n}(\mathbb{X}_n,\hat{\theta}_{n})-\frac{1}{n}{\bf{H}}_{n}(\mathbb{X}_n,\theta_0)\biggr|{\bf{1}}_{B_n^c}\Biggr]\\
    &\leq n{\bf{E}}_{\mathbb{X}_n}\Biggl[\biggl|\frac{1}{n}{\bf{H}}_{n}(\mathbb{X}_n,\hat{\theta}_{n})\biggr|{\bf{1}}_{B_n^c}\Biggr]+n{\bf{E}}_{\mathbb{X}_n}\Biggl[\biggl|\frac{1}{n}{\bf{H}}_{n}(\mathbb{X}_n,\theta_0)\biggr|{\bf{1}}_{B_n^c}\Biggr]\\
    &\leq 2n{\bf{E}}_{\mathbb{X}_n}\Biggl[\sup_{\theta\in\Theta}\biggl|\frac{1}{n}{\bf{H}}_{n}(\mathbb{X}_n,\theta)\biggr|{\bf{1}}_{B_n^c}\Biggr].
\end{align*}
Since we have
\begin{align*}
    n{\bf{E}}_{\mathbb{X}_n}\Biggl[\sup_{\theta\in\Theta}\biggl|\frac{1}{n}{\bf{H}}_{n}(\mathbb{X}_n,\theta)\biggr|{\bf{1}}_{B_n^c}\Biggr]&\leq n{\bf{E}}_{\mathbb{X}_n}\Biggl[\biggl(\sup_{\theta\in\Theta}\biggl|\frac{1}{n}{\bf{H}}_{n}(\mathbb{X}_n,\theta)\biggr|\biggr)^2\Biggr]^{1/2}{\bf{E}}\Bigl[{\bf{1}}_{B_n^c}\Bigr]^{1/2}\\
    &\leq n{\bf{E}}_{\mathbb{X}_n}\Biggl[\sup_{\theta\in\Theta}\biggl|\frac{1}{n}{\bf{H}}_{n}(\mathbb{X}_n,\theta)\biggr|^2\Biggr]^{1/2}{\bf{P}}\bigl(B_n^c\bigr)^{1/2},
\end{align*}
it follows from Lemma \ref{nHlemma} and (\ref{Bnc})  that
\begin{align*}
    {\bf{E}}_{\mathbb{X}_n}\Bigl[\Bigl\{{\bf{H}}_{n}(\mathbb{X}_n,\hat{\theta}_{n})-{\bf{H}}_{n}(\mathbb{X}_n,\theta_0)\Bigr\}{\bf{1}}_{B_n^c}\Bigr]\leq \frac{C}{n},
\end{align*}
which implies (\ref{H1dC}). Therefore,  (\ref{H1d}), (\ref{H2d}), (\ref{H3d}) and (\ref{H1dC}) lead to (\ref{H1}). This completes the proof of (\ref{H1}).\\
\ \\
Proof of (\ref{H3}). By Taylor's theorem, the decomposition is given by
\begin{align}
    {\bf{H}}_n(\mathbb{Z}_n,\hat{\theta}_n)-{\bf{H}}_n(\mathbb{Z}_n,\theta_0)
    =H''_{1,n}+H''_{2,n}+H''_{3,n} \label{TaylorH3}
\end{align}
on $B_n$, where 
\begin{align*}
    H''_{1,n}&=\sum_{j_1=1}^q\biggl(\frac{1}{\sqrt{n}}\partial_{\theta^{(j_1)}}
    {\bf{H}}_n(\mathbb{Z}_n,\theta_0)\biggr)\hat{u}^{(j_1)}_{n},\\
    H''_{2,n}&=\frac{1}{2}\sum_{j_1=1}^q\sum_{j_2=1}^q\biggl(\frac{1}{n}\partial_{\theta^{(j_1)}}\partial_{\theta^{(j_2)}}
    {\bf{H}}_n(\mathbb{Z}_n,\theta_0)\biggr)\hat{u}^{(j_1)}_{n}\hat{u}^{(j_2)}_{n},\\
    H''_{3,n}&=\frac{1}{2}\sum_{j_1=1}^q\sum_{j_2=1}^q\sum_{j_3=1}^q\biggl(\frac{1}{n\sqrt{n}}\int_0^1(1-\lambda)^2\partial_{\theta^{(j_1)}}\partial_{\theta^{(j_2)}}\partial_{\theta^{(j_3)}}{\bf{H}}_n(\mathbb{Z}_n,\bar{\theta}_{n,\lambda})d\lambda\biggr)\hat{u}^{(j_1)}_{n}\hat{u}^{(j_2)}_{n}\hat{u}^{(j_3)}_{n}.
\end{align*}
Note that the expansion (\ref{TaylorH3}) is well defined on $B_n$. Define
\begin{align*}
    \tilde{H}''_{3,n}=\left\{
    \begin{alignedat}{2}   
    &H''_{3,n} &\quad\bigl(\mbox{on}\ B_{n}\bigr), \\
    &1 &\bigl(\mbox{on}\ B_{n}^c\bigr).
  \end{alignedat} 
  \right. 
\end{align*}
Since it is shown that
\begin{align*}
    \Bigl\{{\bf{H}}_{n}(\mathbb{Z}_n,\hat{\theta}_{n})-{\bf{H}}_{n}(\mathbb{Z}_n,\theta_0)\Bigr\}{\bf{1}}_{B_n}=H''_{1,n}{\bf{1}}_{B_n}+H''_{2,n}{\bf{1}}_{B_n}+\tilde{H}''_{3,n}{\bf{1}}_{B_n},
\end{align*}
it follows that
\begin{align*}
    H_{3,n}&=-{\bf{E}}_{\mathbb{X}_n}\Bigl[{\bf{E}}_{\mathbb{Z}_n}\Bigl[H''_{1,n}\Bigr]{\bf{1}}_{B_n}\Bigr]-{\bf{E}}_{\mathbb{X}_n}\Bigl[{\bf{E}}_{\mathbb{Z}_n}\Bigl[H''_{2,n}\Bigr]{\bf{1}}_{B_n}\Bigr]\\
    &\qquad-{\bf{E}}_{\mathbb{X}_n}\Bigl[{\bf{E}}_{\mathbb{Z}_n}\Bigl[\tilde{H}''_{3,n}\Bigr]{\bf{1}}_{B_n}\Bigr]-{\bf{E}}_{\mathbb{X}_n}\Bigl[{\bf{E}}_{\mathbb{Z}_n}\Bigl[{\bf{H}}_{n}(\mathbb{Z}_n,\hat{\theta}_{n})-{\bf{H}}_{n}(\mathbb{Z}_n,\theta_0)\Bigr]{\bf{1}}_{B_n^c}\Bigr].
\end{align*}
We first prove that 
\begin{align}
    {\bf{E}}_{\mathbb{X}_n}\Bigl[{\bf{E}}_{\mathbb{Z}_n}\Bigl[H''_{1,n}\Bigr]{\bf{1}}_{B_n}\Bigr]\longrightarrow 0 \label{H1dd}
\end{align}
as $n\longrightarrow\infty$. From Lemmas \ref{H1lemma} and \ref{problemma}, we obtain
\begin{align}
    {\bf{E}}_{\mathbb{Z}_n}\left[\frac{1}{\sqrt{n}}
    \partial_{\theta}{\bf{H}}_n(\mathbb{Z}_n,\theta_0)\right]
    \longrightarrow \mathbb{E}\Bigl[{\bf{I}}(\theta_0)^{1/2}Z_q\Bigr]
    =0 \label{EH1}
\end{align}
as $n\longrightarrow\infty$. Moreover, we see from Proposition \ref{moment} and (\ref{Bnc}) that
\begin{align*}
    \Bigl|{\bf{E}}_{\mathbb{X}_n}\Bigl[\hat{u}_n^{(j_1)}{\bf{1}}_{B_n^c}\Bigr]\Bigr|&\leq 
    {\bf{E}}_{\mathbb{X}_n}\Bigl[\bigl|\hat{u}_n^{(j_1)}\bigr|{\bf{1}}_{B_n^c}\Bigr]\\
    &\leq{\bf{E}}_{\mathbb{X}_n}\Bigl[\bigl|\hat{u}_n^{(j_1)}\bigr|^2\Bigr]^{1/2}
    {\bf{E}}_{\mathbb{X}_n}\bigl[{\bf{1}}_{B_n^c}\bigr]^{1/2}\\
    &\leq\biggl(\sup_{n\in\mathbb{N}}{\bf{E}}_{\mathbb{X}_n}\Bigl[\bigl|\hat{u}_n^{(j_1)}\bigr|^2\Bigr]^{1/2}\biggr){\bf{P}}\bigl(B_n^c\bigr)^{1/2}\leq\frac{C}{n^2}
\end{align*}
for $j_1=1,\ldots,q$, so that 
\begin{align*}
    {\bf{E}}_{\mathbb{X}_n}\Bigl[\hat{u}_n^{(j_1)}{\bf{1}}_{B_n^c}\Bigr]\longrightarrow 0
\end{align*}
as $n\longrightarrow\infty$, which yields
\begin{align}
    {\bf{E}}_{\mathbb{X}_n}\Bigl[\hat{u}_n^{(j_1)}{\bf{1}}_{B_n}\Bigr]={\bf{E}}_{\mathbb{X}_n}\Bigl[\hat{u}_n^{(j_1)}\Bigr]-{\bf{E}}_{\mathbb{X}_n}\Bigl[\hat{u}_n^{(j_1)}{\bf{1}}_{B_n^c}\Bigr]\longrightarrow 0 \label{Eu1B}
\end{align}
as $n\longrightarrow\infty$. Consequently, it holds from (\ref{EH1}) and (\ref{Eu1B}) that
\begin{align*}
    &\quad\ {\bf{E}}_{\mathbb{X}_n}\Bigl[{\bf{E}}_{\mathbb{Z}_n}\Bigl[H''_{1,n}\Bigr]{\bf{1}}_{B_n}\Bigr]\\
    &=\sum_{j_1=1}^q{\bf{E}}_{\mathbb{Z}_n}\biggl[\frac{1}{\sqrt{n}}\partial_{\theta^{(j_1)}}
    {\bf{H}}_n(\mathbb{Z}_n,\theta_0)\biggr]{\bf{E}}_{\mathbb{X}_n}\Bigl[\hat{u}_n^{(j_1)}{\bf{1}}_{B_n}\Bigr]\longrightarrow 0
\end{align*}
as $n\longrightarrow\infty$, which completes the proof of (\ref{H1dd}). Next, we show
\begin{align}
    {\bf{E}}_{\mathbb{X}_n}\Bigl[{\bf{E}}_{\mathbb{Z}_n}\Bigl[H''_{2,n}\Bigr]{\bf{1}}_{B_n}\Bigr]\longrightarrow -\frac{q}{2} \label{H2dd}
\end{align}
as $n\longrightarrow\infty$. As it follows from Proposition \ref{moment} and (\ref{Bnc}) that
\begin{align*}
    \Bigl|{\bf{E}}_{\mathbb{X}_n}\Bigl[\hat{u}_n\hat{u}_n^{\top}{\bf{1}}_{B_n^c}\Bigr]\Bigr|&\leq 
    {\bf{E}}_{\mathbb{X}_n}\Bigl[\bigl|\hat{u}_n\bigr|^2{\bf{1}}_{B_n^c}\Bigr]\\
    &\leq{\bf{E}}_{\mathbb{X}_n}\Bigl[\bigl|\hat{u}_n\bigr|^4\Bigr]^{1/2}
    {\bf{E}}_{\mathbb{X}_n}\bigl[{\bf{1}}_{B_n^c}\bigr]^{1/2}\\
    &\leq\biggl(\sup_{n\in\mathbb{N}}{\bf{E}}_{\mathbb{X}_n}\Bigl[\bigl|\hat{u}_n\bigr|^4\Bigr]^{1/2}\biggr){\bf{P}}\bigl(B_n^c\bigr)^{1/2}\leq\frac{C}{n^2},
\end{align*}
one gets
\begin{align*}
    {\bf{E}}_{\mathbb{X}_n}\Bigl[\hat{u}_n\hat{u}_n^{\top}{\bf{1}}_{B_n^c}\Bigr]\longrightarrow 0
\end{align*}
as $n\longrightarrow\infty$. Moreover, by Proposition \ref{moment}, we have
\begin{align*}
    {\bf{E}}_{\mathbb{X}_n}
    \Bigl[\hat{u}_n\hat{u}_n^{\top}\Bigr]&={\bf{E}}_{\mathbb{X}_n}\Bigl[g(\hat{u}_n)\Bigr]\\
    &\longrightarrow \mathbb{E}\Bigl[g\bigl({\bf{I}}(\theta_0)^{-1/2}Z_q\bigr)\Bigr]={\bf{I}}(\theta_0)^{-1}
\end{align*}
as $n\longrightarrow\infty$, which gives
\begin{align}
    {\bf{E}}_{\mathbb{X}_n}
    \Bigl[\hat{u}_n\hat{u}_n^{\top}{\bf{1}}_{B_n}\Bigr]={\bf{E}}_{\mathbb{X}_n}
    \Bigl[\hat{u}_n\hat{u}_n^{\top}\Bigr]-{\bf{E}}_{\mathbb{X}_n}\Bigl[\hat{u}_n\hat{u}_n^{\top}{\bf{1}}_{B_n^c}\Bigr]\longrightarrow {\bf{I}}(\theta_0)^{-1} \label{uuB}
\end{align}
as $n\longrightarrow\infty$, where $g(x)=xx^{\top}$ for $x\in\mathbb{R}^q$. By Lemmas \ref{H2lemma} and \ref{problemma}, it is proven that
\begin{align}
    -{\bf{E}}_{\mathbb{Z}_n}\biggl[\frac{1}{n}\partial^2_{\theta}{\bf{H}}_n(\mathbb{Z}_n,\theta_0)\biggr]
    \longrightarrow {\bf{I}}(\theta_0) \label{EH2}
\end{align}
as $n\longrightarrow\infty$. Hence, from  (\ref{uuB}) and (\ref{EH2}), we obtain
\begin{align*}
    {\bf{E}}_{\mathbb{X}_n}\Bigl[{\bf{E}}_{\mathbb{Z}_n}\Bigl[H''_{2,n}\Bigr]{\bf{1}}_{B_n}\Bigr]
    &=\frac{1}{2}{\bf{E}}_{\mathbb{X}_n}\biggl[
    {\bf{E}}_{\mathbb{Z}_n}\biggl[\hat{u}_n^{\top}\biggl(\frac{1}{n}\partial^2_{\theta}{\bf{H}}_n(\mathbb{Z}_n,\theta_0)\biggr)\hat{u}_n\biggr]{\bf{1}}_{B_n}\biggr]\\
    &=\frac{1}{2}\tr\biggl({\bf{E}}_{\mathbb{Z}_n}\biggl[\frac{1}{n}\partial^2_{\theta}{\bf{H}}_n(\mathbb{Z}_n,\theta_0)\biggr]{\bf{E}}_{\mathbb{X}_n}
    \Bigl[\hat{u}_n\hat{u}_n^{\top}{\bf{1}}_{B_n}\Bigr]\biggr)\\
    &\longrightarrow-\frac{1}{2}\tr \Bigl({\bf{I}}(\theta_0){\bf{I}}(\theta_0)^{-1}\Bigr)=-\frac{q}{2}
\end{align*}
as $n\longrightarrow\infty$, which yields (\ref{H2dd}). Furthermore, we show 
\begin{align}
    {\bf{E}}_{\mathbb{X}_n}\Bigl[{\bf{E}}_{\mathbb{Z}_n}\Bigl[\tilde{H}''_{3,n}\Bigr]{\bf{1}}_{B_n}\Bigr]\longrightarrow 0\label{H3dd}
\end{align}
as $n\longrightarrow\infty$. Since
\begin{align*}
    &\quad\ \bigl|\tilde{H}'''_{3,n}\bigr|\\
    &\leq \frac{1}{\sqrt{n}}\sum_{j_1=1}^q\sum_{j_2=1}^q\sum_{j_3=1}^q\frac{1}{n}\int_0^1(1-\lambda)^2\bigl|\partial_{\theta^{(j_1)}}\partial_{\theta^{(j_2)}}\partial_{\theta^{(j_3)}}
    {\bf{H}}_{n}(\mathbb{Z}_n,\bar{\theta}_{n,\lambda})\bigr|d\lambda
    \bigl|\hat{u}_n^{(j_1)}\bigr|\bigl|\hat{u}_n^{(j_2)}\bigr|\bigl|\hat{u}_n^{(j_3)}\bigr|\\
    &\leq \frac{1}{\sqrt{n}}\sum_{j_1=1}^q\sum_{j_2=1}^q\sum_{j_3=1}^q\biggl(\frac{1}{n}\sup_{\theta\in\Theta}\Bigr|
    \partial_{\theta^{(j_1)}}\partial_{\theta^{(j_2)}}\partial_{\theta^{(j_3)}}
    {\bf{H}}_{n}(\mathbb{Z}_n,\theta)\Bigl|\biggr)
    \bigl|\hat{u}_n^{(j_1)}\bigr|\bigl|\hat{u}_n^{(j_2)}\bigr|\bigl|\hat{u}_n^{(j_3)}\bigr|
\end{align*}
on $B_n$, one has
\begin{align*}
    \bigl|\tilde{H}'''_{3,n}\bigr|{\bf{1}}_{B_n}
    &\leq\frac{1}{\sqrt{n}}\sum_{j_1=1}^q\sum_{j_2=1}^q\sum_{j_3=1}^q\biggl(\frac{1}{n}\sup_{\theta\in\Theta}\Bigr|
    \partial_{\theta^{(j_1)}}\partial_{\theta^{(j_2)}}\partial_{\theta^{(j_3)}}
    {\bf{H}}_{n}(\mathbb{Z}_n,\theta)\Bigl|\biggr)
    \bigl|\hat{u}_n^{(j_1)}\bigr|\bigl|\hat{u}_n^{(j_2)}\bigr|\bigl|\hat{u}_n^{(j_3)}\bigr|.
\end{align*}
By Proposition \ref{moment} and Lemma \ref{H3lemma}, we have
\begin{align*}
    &\quad\ \Bigl|{\bf{E}}_{\mathbb{X}_n}\Bigl[{\bf{E}}_{\mathbb{Z}_n}\Bigl[\tilde{H}'''_{3,n}\Bigr]{\bf{1}}_{B_n}\Bigr]\Bigr|\\
    &\leq \frac{1}{\sqrt{n}}\sum_{j_1=1}^q\sum_{j_2=1}^q\sum_{j_3=1}^q\sup_{n\in\mathbb{N}}{\bf{E}}_{\mathbb{Z}_n}
    \biggl[\frac{1}{n}\sup_{\theta\in\Theta}\Bigr|
    \partial_{\theta^{(j_1)}}\partial_{\theta^{(j_2)}}\partial_{\theta^{(j_3)}}
    {\bf{H}}_{n}(\mathbb{Z}_n,\theta)\Bigl|\biggr]\\
    &\qquad\qquad
    \times\sup_{n\in\mathbb{N}}{\bf{E}}_{\mathbb{X}_n}
    \Bigl[\bigl|\hat{u}_n^{(j_1)}\bigr|^{2}\Bigr]^{1/2}\sup_{n\in\mathbb{N}}{\bf{E}}_{\mathbb{X}_n}
    \Bigl[\bigl|\hat{u}_n^{(j_2)}\bigr|^{4}\Bigr]^{1/4}\sup_{n\in\mathbb{N}}{\bf{E}}_{\mathbb{X}_n}
    \Bigl[\bigl|\hat{u}_n^{(j_3)}\bigr|^{4}\Bigr]^{1/4}\longrightarrow 0
\end{align*}
as $n\longrightarrow\infty$, so that (\ref{H3dd}) holds. Finally, we show
\begin{align}
    {\bf{E}}_{\mathbb{X}_n}\Bigl[{\bf{E}}_{\mathbb{Z}_n}\Bigl[{\bf{H}}_{n}(\mathbb{Z}_n,\hat{\theta}_{n})-{\bf{H}}_{n}(\mathbb{Z}_n,\theta_0)\Bigr]{\bf{1}}_{B_n^c}\Bigr]\longrightarrow 0 \label{TaylorH2}
\end{align}
as $n\longrightarrow\infty$. Since
\begin{align*}
    &\quad\ \Bigl|{\bf{E}}_{\mathbb{X}_n}\Bigl[{\bf{E}}_{\mathbb{Z}_n}
    \Bigl[{\bf{H}}_{n}(\mathbb{Z}_n,\hat{\theta}_{n})-{\bf{H}}_{n}(\mathbb{Z}_n,\theta_0)\Bigr]{\bf{1}}_{B_n^c}\Bigr]\Bigr|\\
    &\leq n{\bf{E}}_{\mathbb{X}_n}\Biggl[{\bf{E}}_{\mathbb{Z}_n}\Biggl[\biggl|\frac{1}{n}{\bf{H}}_{n}(\mathbb{Z}_n,\hat{\theta}_{n})-\frac{1}{n}{\bf{H}}_{n}(\mathbb{Z}_n,\theta_0)\biggr|\Biggr]{\bf{1}}_{B_n^c}\Biggr]\\
    &\leq n{\bf{E}}_{\mathbb{X}_n}\Biggl[{\bf{E}}_{\mathbb{Z}_n}\Biggl[\biggl|\frac{1}{n}{\bf{H}}_{n}(\mathbb{Z}_n,\hat{\theta}_{n})\biggr|\Biggr]{\bf{1}}_{B_n^c}\Biggr]+n{\bf{E}}_{\mathbb{X}_n}\Biggl[{\bf{E}}_{\mathbb{Z}_n}\Biggl[\biggl|\frac{1}{n}{\bf{H}}_{n}(\mathbb{Z}_n,\theta_0)\biggr|\Biggr]{\bf{1}}_{B_n^c}\Biggr]\\
    &\leq 2n{\bf{E}}_{\mathbb{Z}_n}\Biggl[\sup_{\theta\in\Theta}\biggl|\frac{1}{n}{\bf{H}}_{n}(\mathbb{Z}_n,\theta)\biggr|\Biggr]{\bf{E}}_{\mathbb{X}_n}\Bigl[{\bf{1}}_{B_n^c}\Bigr]\\
    &\leq 2n\Biggl(\sup_{n\in\mathbb{N}}{\bf{E}}_{\mathbb{Z}_n}\Biggl[\sup_{\theta\in\Theta}\biggl|\frac{1}{n}{\bf{H}}_{n}(\mathbb{Z}_n,\theta)\biggr|\Biggr]\Biggr){\bf{P}}\bigl(B_n^c\bigr),
\end{align*}
it follows from Lemma \ref{nHlemma} and (\ref{Bnc}) that
\begin{align*}
    \Bigl|{\bf{E}}_{\mathbb{X}_n}\Bigl[{\bf{E}}_{\mathbb{Z}_n}
    \Bigl[{\bf{H}}_{n}(\mathbb{Z}_n,\hat{\theta}_{n})-{\bf{H}}_{n}(\mathbb{Z}_n,\theta_0)\Bigr]{\bf{1}}_{B_n^c}\Bigr]\Bigr|\leq\frac{C}{n^3},
\end{align*}
which implies (\ref{TaylorH2}). Therefore, by (\ref{H1dd}), (\ref{H2dd}), (\ref{H3dd}) and (\ref{TaylorH2}), we obtain (\ref{H3}). 
\end{proof}
\begin{lemma}\label{optim}
Suppose that {\bf{[A1]}}-{\bf{[A4]}} and {\bf{[B2]}}. Then,
\begin{align*}
    \frac{1}{n}{\bf{H}}_{m,n}(\hat{\theta}_{m,n})\stackrel{p}{\longrightarrow}
    {\bf{H}}_m(\bar{\theta}_{m,0})
\end{align*}
for $m=1,\ldots,M$ as $n\longrightarrow\infty$.
\end{lemma}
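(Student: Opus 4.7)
The plan is to reduce the claim to a uniform convergence statement on the compact closure $\bar{\Theta}_m$, and then invoke the elementary inequality $|\sup f - \sup g| \leq \sup|f-g|$. By definition, $\hat{\theta}_{m,n}$ maximizes ${\bf{H}}_{m,n}(\theta_m)$ over $\bar{\Theta}_m$, while $\bar{\theta}_m$ achieves the supremum of ${\bf{H}}_{m}(\theta_m)$ over $\Theta_m$; by continuity of ${\bf{\Sigma}}_m(\cdot)$ and its positive-definiteness on $\bar{\Theta}_m$, this supremum coincides with $\sup_{\theta_m\in\bar{\Theta}_m}{\bf{H}}_m(\theta_m)$. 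Consequently,
\begin{align*}
    \biggl|\frac{1}{n}{\bf{H}}_{m,n}(\hat{\theta}_{m,n})-{\bf{H}}_m(\bar{\theta}_m)\biggr|
    \leq \sup_{\theta_m\in\bar{\Theta}_m}\biggl|\frac{1}{n}{\bf{H}}_{m,n}(\theta_m)-{\bf{H}}_m(\theta_m)\biggr|,
\end{align*}
so it suffices to show that the right-hand side tends to zero in probability.

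The main step is to mimic the decomposition carried out in the proof of Lemma \ref{Ylemma}. Inserting and subtracting $h_n({\bf{\Sigma}}_0)_{k_1k_2}$ inside the quadratic term and $1$ inside the averaged indicator of the $\log\det$ term gives
\begin{align*}
    \frac{1}{n}{\bf{H}}_{m,n}(\theta_m)-{\bf{H}}_m(\theta_m)
    &=-\frac{1}{2}\sum_{k_1=1}^{p}\sum_{k_2=1}^{p}\bigl({\bf{\Sigma}}_m(\theta_m)^{-1}\bigr)_{k_1k_2}\bigl({\bf{M}}^{\dagger}_{1,n,k_1,k_2}+{\bf{R}}^{\dagger}_{1,n,k_1,k_2}\bigr)\\
    &\qquad-\frac{1}{2}\log\det{\bf{\Sigma}}_m(\theta_m)\bigl({\bf{M}}^{\dagger}_{2,n}+{\bf{R}}^{\dagger}_{2,n}\bigr),
\end{align*}
where ${\bf{M}}^{\dagger}_{1,n,k_1,k_2}$, ${\bf{R}}^{\dagger}_{1,n,k_1,k_2}$, ${\bf{M}}^{\dagger}_{2,n}$ and ${\bf{R}}^{\dagger}_{2,n}$ are precisely the quantities introduced in the proof of Lemma \ref{Ylemma}. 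Since ${\bf{\Sigma}}_m(\theta_m)\in\mathcal{M}_p^{+}$ for every $\theta_m\in\bar{\Theta}_m$, continuity on the compact set $\bar{\Theta}_m$ forces the entries of ${\bf{\Sigma}}_m(\theta_m)^{-1}$ and $|\log\det{\bf{\Sigma}}_m(\theta_m)|$ to be uniformly bounded in $\theta_m$. Combining this with the moment bounds (\ref{M1d})--(\ref{R2d}) yields
\begin{align*}
    \sup_{n\in\mathbb{N}}{\bf{E}}\Biggl[\biggl(n^{\varepsilon}\sup_{\theta_m\in\bar{\Theta}_m}\biggl|\frac{1}{n}{\bf{H}}_{m,n}(\theta_m)-{\bf{H}}_m(\theta_m)\biggr|\biggr)^L\Biggr]<\infty
\end{align*}
for every $L\geq 1$ and $\varepsilon\in(0,1/4]$, which is far stronger than the required convergence in probability.

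The conceptual reason the proof of Lemma \ref{Ylemma} transports without modification is that the bounds on ${\bf{M}}^{\dagger}$ and ${\bf{R}}^{\dagger}$ depend only on the true conditional moments of $\Delta_i^n X$ through Lemma \ref{Elemma}, and therefore remain valid when the reference matrix ${\bf{\Sigma}}_m(\theta_{m,0})$ is replaced by ${\bf{\Sigma}}_0$---the latter always represents the true limiting quadratic-variation structure, regardless of whether Model $m$ is correctly specified. Note also that assumption {\bf{[B2]}} is not actually invoked in the value convergence; it serves only to guarantee uniqueness of $\bar{\theta}_m$, which is relevant for consistency of the argmax but not for the convergence of the maximal value. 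The only real obstacle is therefore the deterministic bookkeeping of verifying uniform boundedness of the coefficients on $\bar{\Theta}_m$, which follows from continuity and positive-definiteness of ${\bf{\Sigma}}_m(\cdot)$ on this compact set, with no further probabilistic input needed.
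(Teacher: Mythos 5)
Your argument is correct, and it is essentially self-contained where the paper is not: the paper's proof of Lemma \ref{optim} consists of a single citation to Theorem 4 of Kusano and Uchida \cite{Kusano(jump)}, whereas you reconstruct the argument from ingredients already present in Section \ref{proofs}. The two key steps both check out. First, the reduction via $|\sup f-\sup g|\leq\sup|f-g|$ is legitimate because ${\bf{H}}_m$ is continuous on the compact set $\bar{\Theta}_m$ (by the standing assumption that ${\bf{\Sigma}}_m(\theta_m)$ is positive definite on $\bar{\Theta}_m$), so $\sup_{\Theta_m}{\bf{H}}_m=\sup_{\bar{\Theta}_m}{\bf{H}}_m={\bf{H}}_m(\bar{\theta}_m)$ and both suprema may be taken over $\bar{\Theta}_m$. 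Second, your decomposition of $\tfrac{1}{n}{\bf{H}}_{m,n}(\theta_m)-{\bf{H}}_m(\theta_m)$ into the ${\bf{M}}^{\dagger}$ and ${\bf{R}}^{\dagger}$ terms is exactly the computation in Lemma \ref{Ylemma} with the $\theta$-dependent coefficients $({\bf{\Sigma}}_m(\theta_m)^{-1}-{\bf{\Sigma}}_m(\theta_{m,0})^{-1})_{k_1k_2}$ replaced by $({\bf{\Sigma}}_m(\theta_m)^{-1})_{k_1k_2}$, and your observation that the estimates (\ref{M1d})--(\ref{R2d}) survive in the misspecified case is the essential point: ${\bf{M}}^{\dagger}_{1,n,k_1,k_2}$ and ${\bf{M}}^{\dagger}_{2,n}$ do not reference any model at all, and ${\bf{R}}^{\dagger}_{1,n,k_1,k_2}$ is controlled by Lemma \ref{Elemma}, which is a statement about the true data-generating process with centering matrix ${\bf{\Sigma}}_0$. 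This is also precisely how the paper itself obtains (\ref{EH}) in Lemma \ref{nHlemma}. Your remark that {\bf{[B2]}} is not used for the convergence of the maximal value (only for identifying the argmax) is accurate; the hypothesis is carried in the lemma statement because it is needed downstream in Proposition \ref{miss}. One cosmetic discrepancy: the lemma statement writes ${\bf{H}}_m(\bar{\theta}_{m,0})$ while the optimal parameter is introduced as $\bar{\theta}_m$; you correctly treat these as the same object.
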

\begin{proof}
The results can be shown in a similar way to the proof of Theorem 4 in Kusano and Uchida \cite{Kusano(jump)}.
\end{proof}
\begin{proof}[\textbf{Proof of Proposition \ref{miss}}]
Using Lemma \ref{optim}, we can prove this result in an analogous manner to Theorem 3 in Kusano and Uchida \cite{Kusano(AIC)}.
\end{proof}

\section{Appendix}
\subsection{Proof of Lemma \ref{EX2abslemma}}\label{proofEX2abslemma}
Let 
\begin{align*}
    C_{i,j,0}^n&=\bigl\{J_{i,j}^n=0,\ |\Delta_{i}^n X|\leq Dh_n^{\rho}\bigl\},\quad C_{i,j,1}^n=\bigl\{J_{i,j}^n=1,\ |\Delta_{i}^n X|\leq Dh_n^{\rho}\bigl\},\\
    C_{i,j,2}^n&=\bigl\{J_{i,j}^n\geq 2,\ |\Delta_{i}^n X|\leq Dh_n^{\rho}\bigl\}, \quad 
    D_{i,j,0}^n=\bigl\{J_{i,j}^n=0,\ |\Delta_{i}^n X|> Dh_n^{\rho}\bigl\},\\
    D_{i,j,1}^n&=\bigl\{J_{i,j}^n=1,\ |\Delta_{i}^n X|> Dh_n^{\rho}\bigl\},\quad
    D_{i,j,2}^n=\bigl\{J_{i,j}^n\geq 2,\ |\Delta_{i}^n X|> Dh_n^{\rho}\bigl\}
\end{align*}
for $i=1,\ldots,n$ and $j=1,2,3,4$, where $J_{i,j}^n=p_j((t_{i-1}^n,t_i^n]\times E_j)$. Define
\begin{align*}
    C_{i,k_1,k_2,k_3,k_4}^n=C_{i,1,k_1}^n\cap C_{i,2,k_2}^n\cap C_{i,3,k_3}^n \cap C_{i,4,k_4}^n
\end{align*}
and
\begin{align*}
    D_{i,k_1,k_2,k_3,k_4}^n=D_{i,1,k_1}^n\cap D_{i,2,k_2}^n\cap D_{i,3,k_3}^n \cap D_{i,4,k_4}^n
\end{align*}
for $i=1,\ldots,n$ and $k_1,k_2,k_3,k_4=0,1,2$. We note that
\begin{align*}
    \bigl\{|\Delta_{i}^n X\bigr|\leq Dh_n^{\rho}\bigr\}&=
    \bigcup_{k_1,k_2,k_3,k_4=0,1,2}C_{i,k_1,k_2,k_3,k_4}^n
\end{align*}
and
\begin{align*}
    \bigl\{|\Delta_{i}^n X\bigr|> Dh_n^{\rho}\bigr\}&=
    \bigcup_{k_1,k_2,k_3,k_4=0,1,2}
    D_{i,k_1,k_2,k_3,k_4}^n
\end{align*}
for $i=1,\ldots,n$. Set
\begin{align*}
    K_1&=\Bigl\{(k_1,k_2,k_3,k_4)\in\{0,1,2\}^4;\ 
    \mbox{One element of}\ k_1, k_2, k_3, \mbox{and}\  k_4\ \mbox{is}\ 1,\ \mbox{and the others are 0}\Bigr\},\\
    K_2&=\Bigl\{(k_1,k_2,k_3,k_4)\in\{0,1,2\}^4;\ 
    \mbox{Two elements of}\ k_1, k_2, k_3, \mbox{and}\  k_4\ \mbox{are}\ 1,\ \mbox{and the others are 0}\Bigr\},\\
    K_3&=\Bigl\{(k_1,k_2,k_3,k_4)\in\{0,1,2\}^4;\ 
    \mbox{Three elements of}\ k_1, k_2, k_3, \mbox{and}\  k_4\ \mbox{are}\ 1,\ \mbox{and the other is 0} \Bigr\}
\end{align*}
and
\begin{align*}
    K_4&=\Bigl\{(k_1,k_2,k_3,k_4)\in\{0,1,2\}^4;\ 
    \mbox{At least one of}\ k_1, k_2, k_3, \mbox{and}\  k_4\  \mbox{is}\ 2\Bigr\}.
\end{align*}
\begin{lemma}[Lemma 2 in Kusano and Uchida \cite{Kusano(jump)}]\label{Clemma}
Suppose that $[{\bf{A1}}]$-$[{\bf{A4}}]$ hold. Then, for a sufficiently large $n$, 
\begin{align*}
    &{\bf{P}}\Bigl(C^{n}_{i,0,0,0,0}\big|\mathcal{F}_{i-1}^n\Bigr)=\bar{R}_{i-1}(h_n,\xi,\delta,\varepsilon,\zeta),\\
    &{\bf{P}}\Bigl(C^{n}_{i,1,1,1,1}\big|\mathcal{F}_{i-1}^n\Bigr)\leq\lambda_{1,0}\lambda_{2,0}\lambda_{3,0}\lambda_{4,0} h_n^4,\\
    &{\bf{P}}\Bigl(C^{n}_{i,k_1,k_2,k_3,k_4}\big|\mathcal{F}_{i-1}^n\Bigr)=R_{i-1}(h_n^{\rho+1},\xi,\delta,\varepsilon,\zeta)\quad (k_1,k_2,k_3,k_4)\in K_1,\\
    &{\bf{P}}\Bigl(C^{n}_{i,k_1,k_2,k_3,k_4}\big|\mathcal{F}_{i-1}^n\Bigr)\leq \Bigl(\max_{j=1,2,3,4}\lambda_{j,0}\Bigr)^2 h_n^2\quad (k_1,k_2,k_3,k_4)\in K_2,\\
    & {\bf{P}}\Bigl(C^{n}_{i,k_1,k_2,k_3,k_4}\big|\mathcal{F}_{i-1}^n\Bigr)\leq 
    \Bigl(\max_{j=1,2,3,4}\lambda_{j,0}\Bigr)^3 h_n^3\quad (k_1,k_2,k_3,k_4)\in K_3
\end{align*}
and
\begin{align*}
    {\bf{P}}\Bigl(C^{n}_{i,k_1,k_2,k_3,k_4}\big|\mathcal{F}_{i-1}^n\Bigr)\leq \Bigl(\max_{j=1,2,3,4}\lambda_{j,0}^2\Bigr)h_n^2
    \quad (k_1,k_2,k_3,k_4)\in K_4\qquad 
\end{align*}
for $i=1,\ldots,n$.
\end{lemma}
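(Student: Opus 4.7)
The plan is to condition on $\mathcal{F}_{i-1}^n$ and exploit two structural facts: the Poisson random measures $p_1,p_2,p_3,p_4$ are mutually independent and independent of $\mathcal{F}_{i-1}^n$ over $(t_{i-1}^n,t_i^n]$, and each counting variable $J_{i,j}^n$ is then conditionally Poisson distributed with mean $\lambda_{j,0}h_n$. This immediately gives ${\bf{P}}(J_{i,j}^n=k\mid\mathcal{F}_{i-1}^n)=e^{-\lambda_{j,0}h_n}(\lambda_{j,0}h_n)^k/k!$, hence ${\bf{P}}(J_{i,j}^n=1\mid\mathcal{F}_{i-1}^n)\leq\lambda_{j,0}h_n$ and ${\bf{P}}(J_{i,j}^n\geq 2\mid\mathcal{F}_{i-1}^n)\leq C(\lambda_{j,0}h_n)^2$, while the joint law of $(J_{i,1}^n,\ldots,J_{i,4}^n)$ is the product of the marginals.

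The bounds for $C_{i,1,1,1,1}^n$ and for $(k_1,\ldots,k_4)\in K_2\cup K_3\cup K_4$ then follow from the inclusion $C_{i,k_1,k_2,k_3,k_4}^n\subset\{J_{i,1}^n=k_1,\ldots,J_{i,4}^n=k_4\}$ and the product structure of the joint Poisson probabilities. In the $K_4$ case, for instance, one isolates the coordinate $j_0$ with $J_{i,j_0}^n\geq 2$, bounds that mass by $C(\lambda_{j_0,0}h_n)^2$, and leaves the remaining three factors at most $1$, producing the stated $(\max_j\lambda_{j,0}^2)h_n^2$ estimate. No information about the continuous part of $X$ enters these cases.

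For $C_{i,0,0,0,0}^n$ the argument is slightly longer. The no-jump event has conditional probability $\prod_j e^{-\lambda_{j,0}h_n}=1-R_{i-1}(h_n,\xi,\delta,\varepsilon,\zeta)$, and on this event $\Delta_i^n X$ reduces to a pure-diffusion increment of the latent processes. By the Lipschitz and polynomial growth conditions [A1]-[A2] together with Burkholder-Davis-Gundy, ${\bf{E}}\bigl[|\Delta_i^n X|^L\bigm|\mathcal{F}_{i-1}^n,\,J_{i,j}^n=0\ \forall j\bigr]\leq C_L h_n^{L/2}(1+|X_{t_{i-1}^n}|^L)$ for every $L\geq 1$, and Markov's inequality then bounds the probability that $|\Delta_i^n X|>Dh_n^\rho$ by $Ch_n^{L(1/2-\rho)}(1+|X_{t_{i-1}^n}|^L)$. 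Since $\rho<1/2$, taking $L$ large absorbs this into the $R(h_n,\cdot)$ class, and one concludes ${\bf{P}}(C_{i,0,0,0,0}^n\mid\mathcal{F}_{i-1}^n)=\bar{R}_{i-1}(h_n,\xi,\delta,\varepsilon,\zeta)$.

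The main obstacle is the class $K_1$, where exactly one jump occurs in some coordinate $j_0$ and the naive Poisson bound only gives order $h_n$ rather than the required $h_n^{\rho+1}$. The extra $h_n^\rho$ factor is extracted by combining [A3] and [A4]. The single jump contributes to $\Delta_i^n X$ through a composition of the loading matrices with $c_{j_0}(\cdot,z_{j_0})$, and since the loading matrices have full column rank, this contribution has Euclidean norm at least $c'_{j_0,0}|z_{j_0}|$ near the origin by [A4], whereas the continuous part is $O_p(h_n^{1/2})$. Hence $\{|\Delta_i^n X|\leq Dh_n^\rho\}$ forces $|z_{j_0}|$ into a ball of radius $O(h_n^\rho)$ up to an exceptional event in which the Brownian part is anomalously large. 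The density bound $f_{j_0}(z)\leq K_{j_0}|z|^{1-d_{j_0}}$ in [A3] combined with polar coordinates gives $F_{j_0}$-mass $O(h_n^\rho)$ for such a ball, and multiplying by the Poisson factor $\lambda_{j_0,0}h_n$ produces the claimed $R(h_n^{\rho+1},\cdot)$ bound. The delicate point is making the "continuous part is negligible" step rigorous by isolating the exceptional Brownian event and absorbing it into a higher-order remainder of the $R$-class.
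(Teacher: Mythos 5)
Your argument is correct and is the standard threshold-filtering proof (as in Shimizu--Yoshida and Ogihara--Yoshida): product Poisson bounds for the multi-jump classes $K_2$, $K_3$, $K_4$ and for $C^n_{i,1,1,1,1}$, Gaussian-type moment bounds plus Markov's inequality for $C^n_{i,0,0,0,0}$, and the combination of [A3], [A4] and the full column rank of the loading matrices (together with the invertibility of ${\bf{\Psi}}_0$) to extract the extra factor $h_n^{\rho}$ on $K_1$. Note that this paper does not prove the lemma at all --- it is imported verbatim as Lemma 2 of Kusano and Uchida \cite{Kusano(jump)} --- so the comparison is with the cited reference, whose proof proceeds exactly along the lines you describe; the only delicate step, which you already flag, is splitting off the event where the continuous increment exceeds a fixed fraction of $Dh_n^{\rho}$ and absorbing its conditional probability into an $R(h_n^{p},\cdot)$ remainder for arbitrarily large $p$, so that on the complement the single mark $z_{j_0}$ is forced into a ball of radius $O(h_n^{\rho})$ whose $F_{j_0}$-mass is $O(h_n^{\rho})$ by [A3].
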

\begin{lemma}[Lemma 3 in Kusano and Uchida \cite{Kusano(jump)}]\label{Dlemma}
Suppose that $[{\bf{A1}}]$-$[{\bf{A4}}]$ hold. Then, for all $p\geq 1$ and a sufficiently large $n$,
\begin{align*}
    &{\bf{P}}\Bigl(D^{n}_{i,0,0,0,0}\big|\mathcal{F}_{i-1}^n\Bigr)=R_{i-1}(h_n^p,\xi,\delta,\varepsilon,\zeta),\\
    &{\bf{P}}\Bigl(D^{n}_{i,1,1,1,1}\big|\mathcal{F}_{i-1}^n\Bigr)\leq\lambda_{1,0}\lambda_{2,0}\lambda_{3,0}\lambda_{4,0} h_n^4,\\
    &{\bf{P}}\Bigl(D^{n}_{i,1,0,0,0}\big|\mathcal{F}_{i-1}^n\Bigr)=\lambda_{1,0}h_n\bar{R}_{i-1}(h_n^{\rho},\xi,\delta,\varepsilon,\zeta),\\
    &{\bf{P}}\Bigl(D^{n}_{i,0,1,0,0}\big|\mathcal{F}_{i-1}^n\Bigr)=\lambda_{2,0}h_n\bar{R}_{i-1}(h_n^{\rho},\xi,\delta,\varepsilon,\zeta),\\
    &{\bf{P}}\Bigl(D^{n}_{i,0,0,1,0}\big|\mathcal{F}_{i-1}^n\Bigr)=\lambda_{3,0}h_n\bar{R}_{i-1}(h_n^{\rho},\xi,\delta,\varepsilon,\zeta),\\
    &{\bf{P}}\Bigl(D^{n}_{i,0,0,0,1}\big|\mathcal{F}_{i-1}^n\Bigr)=\lambda_{4,0}h_n\bar{R}_{i-1}(h_n^{\rho},\xi,\delta,\varepsilon,\zeta),\\
    &{\bf{P}}\Bigl(D^{n}_{i,k_1,k_2,k_3,k_4}\big|\mathcal{F}_{i-1}^n\Bigr)\leq \Bigl(\max_{j=1,2,3,4}\lambda_{j,0}\Bigr)^2 h_n^2\quad (k_1,k_2,k_3,k_4)\in K_2,\\
    & {\bf{P}}\Bigl(D^{n}_{i,k_1,k_2,k_3,k_4}\big|\mathcal{F}_{i-1}^n\Bigr)\leq 
    \Bigl(\max_{j=1,2,3,4}\lambda_{j,0}\Bigr)^3 h_n^3\quad (k_1,k_2,k_3,k_4)\in K_3  
\end{align*}
and
\begin{align*}
    &{\bf{P}}\Bigl(D^{n}_{i,k_1,k_2,k_3,k_4}\big|\mathcal{F}_{i-1}^n\Bigr)\leq \Bigl(\max_{j=1,2,3,4}\lambda_{j,0}^2\Bigr)h_n^2\quad (k_1,k_2,k_3,k_4)\in K_4 
\end{align*}
for $i=1,\ldots,n$.
\end{lemma}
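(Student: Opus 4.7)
The plan is to condition on $\mathcal{F}_{i-1}^n$ and exploit two structural facts. First, because $p_1,\ldots,p_4$ are mutually independent Poisson random measures with compensator rates $\lambda_{j,0}$, the joint conditional law of the jump counts factorizes as
\[
{\bf{P}}\bigl(J_{i,1}^n=k_1,\ldots,J_{i,4}^n=k_4\big|\mathcal{F}_{i-1}^n\bigr)=\prod_{j=1}^4\frac{(\lambda_{j,0}h_n)^{k_j}}{k_j!}e^{-\lambda_{j,0}h_n}.
\]
Second, $C_{i,k_1,k_2,k_3,k_4}^n$ and $D_{i,k_1,k_2,k_3,k_4}^n$ form a disjoint partition of $\{J_{i,1}^n=k_1,\ldots,J_{i,4}^n=k_4\}$, so each conditional $D$-probability can be recovered by subtracting the corresponding $C$-probability (already supplied by Lemma \ref{Clemma}) from the factorized expression above.

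For the multi-jump configurations in $K_2$, $K_3$, $K_4$ and for $(1,1,1,1)$, I simply drop the indicator $\{|\Delta_i^n X|>Dh_n^{\rho}\}$ and bound ${\bf{P}}(D_{i,k_1,k_2,k_3,k_4}^n|\mathcal{F}_{i-1}^n)$ above by the product of Poisson marginals; using $e^{-\lambda_{j,0}h_n}\leq 1$ and counting how many factors of $\lambda_{j,0}h_n$ appear produces the stated $h_n^4$, $h_n^3$, and $h_n^2$ upper bounds in one stroke.

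For each single-jump equality, e.g.\ $D_{i,1,0,0,0}^n$, I write
\[
{\bf{P}}\bigl(D_{i,1,0,0,0}^n\big|\mathcal{F}_{i-1}^n\bigr)=\lambda_{1,0}h_n\prod_{j=1}^4 e^{-\lambda_{j,0}h_n}-{\bf{P}}\bigl(C_{i,1,0,0,0}^n\big|\mathcal{F}_{i-1}^n\bigr).
\]
Expanding $\prod_{j=1}^4 e^{-\lambda_{j,0}h_n}=1+R_{i-1}(h_n,\xi,\delta,\varepsilon,\zeta)$ and invoking ${\bf{P}}(C_{i,1,0,0,0}^n|\mathcal{F}_{i-1}^n)=R_{i-1}(h_n^{\rho+1},\xi,\delta,\varepsilon,\zeta)$ from Lemma \ref{Clemma}, I then factor out $\lambda_{1,0}h_n$; since $\rho<1$, the residual $h_n^{\rho}$ term dominates $h_n$, so the quantity in brackets is exactly $\bar{R}_{i-1}(h_n^{\rho},\xi,\delta,\varepsilon,\zeta)$. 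The identical calculation handles the three symmetric single-jump configurations.

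The main obstacle is the no-jump case $(0,0,0,0)$, where super-polynomial decay is required. On $\{J_{i,1}^n=\cdots=J_{i,4}^n=0\}$, the increment $\Delta_i^n X$ reduces to its continuous part alone: a drift term of order $h_n$ plus a Gaussian-type diffusion contribution of order $\sqrt{h_n}$. Because $\rho<1/2$, the threshold $Dh_n^{\rho}$ lies deep in the Gaussian tail. Using [A1]--[A2] to control the drift and volatility coefficients together with a Bernstein-type exponential martingale inequality applied to each Brownian contribution ${\bf{S}}_{j,0}\Delta_i^n W_j$ yields a bound of the form $Ce^{-c h_n^{2\rho-1}}$. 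Since $2\rho-1<0$, this decays faster than any polynomial, and hence is absorbed into $R_{i-1}(h_n^p,\xi,\delta,\varepsilon,\zeta)$ for every $p\geq 1$.
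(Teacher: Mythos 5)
The paper does not actually prove this lemma: it is imported verbatim as Lemma 3 of the companion paper Kusano and Uchida \cite{Kusano(jump)}, so there is no internal proof to compare against. That said, your argument is correct and is the standard route one would take. The Poisson factorization is justified by the stated independence of $\mathcal{F}_t$, the future Wiener increments, and the four Poisson random measures, and each $J_{i,j}^n$ is Poisson with mean $\lambda_{j,0}h_n$ because $f_j=\lambda_{j,0}F_j$ with $F_j$ a density; dropping the indicator then gives the $(1,1,1,1)$, $K_2$, $K_3$ bounds immediately, and for $K_4$ the inequality $1-e^{-x}(1+x)\leq x^2/2$ gives the stated $h_n^2$ bound. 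Recovering the single-jump equalities by complementation against Lemma \ref{Clemma} is legitimate (not circular within this paper, where Lemma \ref{Clemma} is an available stated result), and the bookkeeping $\lambda_{1,0}h_n\bigl(1+R(h_n)-R_{i-1}(h_n^{\rho})\bigr)=\lambda_{1,0}h_n\bar{R}_{i-1}(h_n^{\rho},\xi,\delta,\varepsilon,\zeta)$ is right since $h_n\leq h_n^{\rho}$. Two points in the $(0,0,0,0)$ case deserve more care than your sketch gives them: first, you should note explicitly that on the no-jump event the increment coincides in law with that of the continuous SDE started from the $\mathcal{F}_{i-1}^n$-measurable state, and that the independence of the Poisson measures from the Wiener processes lets you discard the no-jump conditioning; second, the exponential martingale inequality cleanly handles only the constant-volatility Brownian part, while the drift contribution $\int_{t_{i-1}^n}^{t_i^n}a(\cdot)\,ds$ requires a supremum moment bound over the interval, which is where the polynomial factor $(1+|\cdot_{t_{i-1}^n}|)^C$ in the definition of $R_{i-1}$ enters; a Markov inequality with arbitrarily high moments (as in Shimizu and Yoshida \cite{Shimizu(2006)}) already yields $h_n^{L(1-2\rho)}$ for every $L$, which suffices for $R_{i-1}(h_n^p,\xi,\delta,\varepsilon,\zeta)$ for all $p\geq 1$ without needing the exponential bound.
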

\begin{proof}[\textbf{Proof of Lemma \ref{EX2abslemma}}]
The stochastic processes $\{\xi^c_{0,t}\}_{t\geq 0}$, $\{\delta^c_{0,t}\}_{t\geq 0}$, $\{\varepsilon^c_{0,t}\}_{t\geq 0}$ and $\{\zeta^c_{0,t}\}_{t\geq 0}$ are defined by 
\begin{align*}
    d\xi^c_{0,t}&=a_{1}(\xi^c_{0,t})dt+{\bf{S}}_{1,0}d W_{1,t},\quad \xi_{0,0}^c=x_{1,0},\\
    d\delta^c_{0,t}&=a_{2}(\delta^c_{0,t})dt+{\bf{S}}_{2,0}d W_{2,t},\quad \delta_{0,0}^c=x_{2,0},\\
    d\varepsilon^c_{0,t}&=a_{3}(\varepsilon^c_{0,t})dt+{\bf{S}}_{3,0}d W_{3,t},\quad \varepsilon_{0,0}^c=x_{3,0},\\
    d\zeta^c_{0,t}&=a_{4}(\zeta^c_{0,t})dt+{\bf{S}}_{4,0}d W_{4,t},\quad \zeta_{0,0}^c=x_{4,0}
\end{align*}
independent of $J_{i,j}^n$. Set 
\begin{align*}
    X^c_{1,0,t}={\bf{\Lambda}}_{1,0}\xi^c_{0,t}+\delta^c_{0,t}
\end{align*}
and
\begin{align*}
    X^c_{2,0,t}&={\bf{\Lambda}}_{2,0}{\bf{\Psi}}_0^{-1}{\bf{\Gamma}}_0\xi^c_{0,t}+{\bf{\Lambda}}_{2,0}{\bf{\Psi}}_0^{-1}\zeta^c_{0,t}
    +\varepsilon^c_{0,t}.
\end{align*}
By using Lemma 20 in Kusano and Uchida \cite{Kusano(2023)}, it is proven that
\begin{align*}
\begin{split}
    &\quad\ {\bf{E}}\biggl[\bigl|\Delta_{i}^n X^c\bigr|^2\big|\mathcal{F}_{i-1}^n\biggr]=R_{i-1}(h_n,\xi,\delta,\varepsilon,\zeta),
\end{split}
\end{align*}
where $X_t^c=(X_{1,0,t}^{c\top},X_{2,0,t}^{c\top})^{\top}$, which yields
\begin{align*}
    {\bf{E}}\Bigl[\bigl|(\Delta_{i}^n X^c)^{(j_1)}\bigr|\bigl|(\Delta_{i}^n X^c)^{(j_2)}\bigr|\big|\mathcal{F}_{i-1}^n\biggr]=R_{i-1}(h_n,\xi,\delta,\varepsilon,\zeta)
\end{align*}
for $j_1,j_2=1,\ldots,p$. As it holds from the independence of $\{\xi_{0,t}\}_{t\geq 0}$, $\{\delta_{0,t}\}_{t\geq 0}$, $\{\varepsilon_{0,t}\}_{t\geq 0}$ and $\{\zeta_{0,t}\}_{t\geq 0}$ that
\begin{align*}
    &\quad\ {\bf{P}}\Bigl(J_{i,1}^n=0, J_{i,2}^n=0, J_{i,3}^n=0, J_{i,4}^n=0
    \big|\mathcal{F}_{i-1}^n\Bigr)\\
    &={\bf{P}}\Bigl(J_{i,1}^n=0 \big|\mathcal{F}_{i-1}^n\Bigr){\bf{P}}\Bigl(J_{i,2}^n=0 \big|\mathcal{F}_{i-1}^n\Bigr){\bf{P}}\Bigl(J_{i,3}^n=0 \big|\mathcal{F}_{i-1}^n\Bigr){\bf{P}}\Bigl(J_{i,4}^n=0 \big|\mathcal{F}_{i-1}^n\Bigr)\\
    &=e^{-(\lambda_{1,0}+\lambda_{2,0}+\lambda_{3,0}+\lambda_{4,0})h_n},
\end{align*}
we see
\begin{align}
\begin{split}
    &\quad\ {\bf{E}}\biggl[\bigl|(\Delta_{i}^n X)^{(j_1)}\bigr|\bigl|(\Delta_{i}^n X)^{(j_2)}\bigr|{\bf{1}}_{\{J_{i,1}^n=0, J_{i,2}^n=0, J_{i,3}^n=0, J_{i,4}^n=0\}}\big|\mathcal{F}_{i-1}^n\biggr]\\
    &={\bf{E}}\biggl[\bigl|(\Delta_{i}^n X^c)^{(j_1)}\bigr|\bigl|(\Delta_{i}^n X^c)^{(j_2)}\bigr|{\bf{1}}_{\{J_{i,1}^n=0, J_{i,2}^n=0, J_{i,3}^n=0, J_{i,4}^n=0\}}\big|\mathcal{F}_{i-1}^n\biggr]\\
    &={\bf{E}}\Bigl[\bigl|(\Delta_{i}^n X^c)^{(j_1)}\bigr|\bigl|(\Delta_{i}^n X^c)^{(j_2)}\bigr|\big|\mathcal{F}_{i-1}^n\biggr]{\bf{P}}\Bigl(J_{i,1}^n=0, J_{i,2}^n=0, J_{i,3}^n=0, J_{i,4}^n=0
    \big|\mathcal{F}_{i-1}^n\Bigr)\\
    &=R_{i-1}(h_n,\xi,\delta,\varepsilon,\zeta).
\end{split}\label{EJ1}
\end{align}
Since it is shown that
\begin{align*}
    {\bf{E}}\Bigl[|\Delta_i^n X|^L\big|\mathcal{F}_{i-1}^n\Bigr]=R_{i-1}(h_n,\xi,\delta,\varepsilon,\zeta)
\end{align*}
for all $L\geq 2$ in an analogous manner to Proposition 3.1 in Shimizu and Yoshida \cite{Shimizu(2006)}, by the Cauchy-Schwartz inequality and Lemma \ref{Dlemma}, one gets
\begin{align}
\begin{split}
    &\quad\ {\bf{E}}\biggl[\bigl|(\Delta_{i}^n X)^{(j_1)}\bigr|\bigl|(\Delta_{i}^n X)^{(j_2)}\bigr|{\bf{1}}_{D^{n}_{i,0,0,0,0}}\big|\mathcal{F}_{i-1}^n\biggr]\\
    &\leq {\bf{E}}\biggl[\bigl|(\Delta_{i}^n X)^{(j_1)}\bigr|^2\bigl|(\Delta_{i}^n X)^{(j_2)}\bigr|^2\big|\mathcal{F}_{i-1}^n\biggr]^{1/2}{\bf{P}}\Bigl(D^{n}_{i,0,0,0,0}\big|\mathcal{F}_{i-1}^n\Bigr)^{1/2}\\
    &\leq {\bf{E}}\Bigl[|\Delta_i^n X|^4\big|\mathcal{F}_{i-1}^n\Bigr]^{1/2}{\bf{P}}\Bigl(D^{n}_{i,0,0,0,0}\big|\mathcal{F}_{i-1}^n\Bigr)^{1/2}\leq R_{i-1}(h_n^{p/2+1/2},\xi,\delta,\varepsilon,\zeta)
\end{split}\label{EJ2}
\end{align}
for all $p\geq 1$. By noting that
\begin{align*}
    \bigl\{J_{i,1}^n=0, J_{i,2}^n=0, J_{i,3}^n=0, J_{i,4}^n=0\bigr\}=C^{n}_{i,0,0,0,0}\cup
    D^{n}_{i,0,0,0,0}
\end{align*}
and $C^{n}_{i,0,0,0,0}\cap D^{n}_{i,0,0,0,0}=\phi$, we have
\begin{align*}
   {\bf{1}}_{\{J_{i,1}^n=0, J_{i,2}^n=0, J_{i,3}^n=0, J_{i,4}^n=0\}}
    &={\bf{1}}_{C^{n}_{i,0,0,0,0}}+{\bf{1}}_{D^{n}_{i,0,0,0,0}}.
\end{align*}
Hence, it follows from (\ref{EJ1}) and (\ref{EJ2}) that
\begin{align}
\begin{split}
    &\quad\ {\bf{E}}\biggl[\bigl|(\Delta_{i}^n X)^{(j_1)}\bigr|\bigl|(\Delta_{i}^n X)^{(j_2)}\bigr|{\bf{1}}_{C^{n}_{i,0,0,0,0}}\big|\mathcal{F}_{i-1}^n\biggr]\\
    &={\bf{E}}\biggl[\bigl|(\Delta_{i}^n X)^{(j_1)}\bigr|\bigl|(\Delta_{i}^n X)^{(j_2)}\bigr|{\bf{1}}_{\{J_{i,1}^n=0, J_{i,2}^n=0, J_{i,3}^n=0, J_{i,4}^n=0\}}\big|\mathcal{F}_{i-1}^n\biggr]\\
    &\qquad\qquad-{\bf{E}}\biggl[\bigl|(\Delta_{i}^n X)^{(j_1)}\bigr|\bigl|(\Delta_{i}^n X)^{(j_2)}\bigr|{\bf{1}}_{D^{n}_{i,0,0,0,0}}\big|\mathcal{F}_{i-1}^n\biggr]=R_{i-1}(h_n,\xi,\delta,\varepsilon,\zeta).
\end{split}\label{EXX-1}
\end{align}
Furthermore, we see from Lemma \ref{Clemma} that
\begin{align*}
    {\bf{E}}\Bigl[\bigl|(\Delta_{i}^n X)^{(j_1)}\bigr|\bigl|(\Delta_{i}^n X)^{(j_2)}\bigr|{\bf{1}}_{C^{n}_{i,1,1,1,1}}\big|\mathcal{F}^n_{i-1}\Bigr]
    &\leq {\bf{E}}\Bigl[|\Delta_i^n X|^2{\bf{1}}_{C^{n}_{i,1,1,1,1}}\big|\mathcal{F}^n_{i-1}\Bigr]\\
    &\leq D^2h_n^{2\rho}{\bf{P}}\Bigl(C^{n}_{i,1,1,1,1}\big|\mathcal{F}_{i-1}^n\Bigr)\\
    &\leq D^2\lambda_{1,0}\lambda_{2,0}
    \lambda_{3,0}\lambda_{4,0}h_n^{2\rho+4},
\end{align*}
which implies
\begin{align}
    {\bf{E}}\Bigl[\bigl|(\Delta_{i}^n X)^{(j_1)}\bigr|\bigl|(\Delta_{i}^n X)^{(j_2)}\bigr|{\bf{1}}_{C^{n}_{i,1,1,1,1}}\big|\mathcal{F}^n_{i-1}\Bigr]=R_{i-1}(h_n^{2\rho+4},\xi,\delta,\varepsilon,\zeta). \label{EXX-2}
\end{align}
In an analogous manner, for a sufficiently large $n$, it holds from Lemma \ref{Clemma} that 
\begin{align}
    {\bf{E}}\Bigl[\bigl|(\Delta_{i}^n X)^{(j_1)}\bigr|\bigl|(\Delta_{i}^n X)^{(j_2)}\bigr|{\bf{1}}_{C^{n}_{i,k_1,k_2,k_3,k_4}}\big|\mathcal{F}^n_{i-1}\Bigr]=R_{i-1}(h_n^{3\rho+1},\xi,\delta,\varepsilon,\zeta) \label{EXX-3}
\end{align}
for $(k_1,k_2,k_3,k_4)\in K_1$, 
\begin{align}
    {\bf{E}}\Bigl[\bigl|(\Delta_{i}^n X)^{(j_1)}\bigr|\bigl|(\Delta_{i}^n X)^{(j_2)}\bigr|{\bf{1}}_{C^{n}_{i,k_1,k_2,k_3,k_4}}\big|\mathcal{F}^n_{i-1}\Bigr]=R_{i-1}(h_n^{2\rho+2},\xi,\delta,\varepsilon,\zeta) \label{EXX-4}
\end{align}
for $(k_1,k_2,k_3,k_4)\in K_2$, 
\begin{align}
    {\bf{E}}\Bigl[\bigl|(\Delta_{i}^n X)^{(j_1)}\bigr|\bigl|(\Delta_{i}^n X)^{(j_2)}\bigr|{\bf{1}}_{C^{n}_{i,k_1,k_2,k_3,k_4}}\big|\mathcal{F}^n_{i-1}\Bigr]=R_{i-1}(h_n^{2\rho+3},\xi,\delta,\varepsilon,\zeta) \label{EXX-5}
\end{align}
for $(k_1,k_2,k_3,k_4)\in K_3$, and
\begin{align}
    {\bf{E}}\Bigl[\bigl|(\Delta_{i}^n X)^{(j_1)}\bigr|\bigl|(\Delta_{i}^n X)^{(j_2)}\bigr|{\bf{1}}_{C^{n}_{i,k_1,k_2,k_3,k_4}}\big|\mathcal{F}^n_{i-1}\Bigr]=R_{i-1}(h_n^{2\rho+2},\xi,\delta,\varepsilon,\zeta) \label{EXX-6}
\end{align}
for $(k_1,k_2,k_3,k_4)\in K_4$. Therefore, from (\ref{EXX-1})-(\ref{EXX-6}), one has
\begin{align*}
     &\quad\ {\bf{E}}\Bigl[\bigl|(\Delta_{i}^n X)^{(j_1)}\bigr|\bigl|(\Delta_{i}^n X)^{(j_2)}\bigr|{\bf{1}}_{\{|\Delta_i^n X|\leq Dh_n^{\rho}\}}\big|\mathcal{F}_{i-1}^n\biggr]\\
    &={\bf{E}}\biggl[\bigl|(\Delta_{i}^n X)^{(j_1)}\bigr|\bigl|(\Delta_{i}^n X)^{(j_2)}\bigr|{\bf{1}}_{C^{n}_{i,0,0,0,0}}\big|\mathcal{F}^n_{i-1}\Bigr]\\
    &\quad+{\bf{E}}\biggl[\bigl|(\Delta_{i}^n X)^{(j_1)}\bigr|\bigl|(\Delta_{i}^n X)^{(j_2)}\bigr|{\bf{1}}_{C^{n}_{i,1,1,1,1}}\big|\mathcal{F}^n_{i-1}\biggr]\\
    &\quad+\sum_{(k_1,k_2,k_3,k_4)\in K_1} {\bf{E}}\Bigl[\bigl|(\Delta_{i}^n X)^{(j_1)}\bigr|\bigl|(\Delta_{i}^n X)^{(j_2)}\bigr|
    {\bf{1}}_{C^{n}_{i,k_1,k_2,k_3,k_4}}\big|\mathcal{F}^n_{i-1}\Bigr]\\
    &\quad+\sum_{(k_1,k_2,k_3,k_4)\in K_2} {\bf{E}}\Bigl[
    \bigl|(\Delta_{i}^n X)^{(j_1)}\bigr|\bigl|(\Delta_{i}^n X)^{(j_2)}\bigr|
    {\bf{1}}_{C^{n}_{i,k_1,k_2,k_3,k_4}}\big|\mathcal{F}^n_{i-1}\Bigr]\\
    &\quad+\sum_{(k_1,k_2,k_3,k_4)\in K_3} {\bf{E}}\Bigl[
    \bigl|(\Delta_{i}^n X)^{(j_1)}\bigr|\bigl|(\Delta_{i}^n X)^{(j_2)}\bigr|
    {\bf{1}}_{C^{n}_{i,k_1,k_2,k_3,k_4}}\big|\mathcal{F}^n_{i-1}\Bigr]\\
    &\quad+\sum_{(k_1,k_2,k_3,k_4)\in K_4} {\bf{E}}\Bigl[
    \bigl|(\Delta_{i}^n X)^{(j_1)}\bigr|\bigl|(\Delta_{i}^n X)^{(j_2)}\bigr|
    {\bf{1}}_{C^{n}_{i,k_1,k_2,k_3,k_4}}\big|\mathcal{F}^n_{i-1}\Bigr]=R_{i-1}(h_n,\xi,\delta,\varepsilon,\zeta)
\end{align*}
for a sufficiently large $n$. This completes the proof.
\end{proof}
\subsection{The check of identifiability (\ref{model2iden})}\label{iden}
\ \vspace{2mm}\\
Assume that 
\begin{align*}
    {\bf{\Sigma}}_{2}(\theta_{2})={\bf{\Sigma}}_{2}(\theta_{2,0}).
\end{align*}
In an analogous manner to the proof of Appendix in Kusano and Uchida \cite{Kusano(BIC)}, we obtain
\begin{align*}
    \theta^{(1)}_2=\theta_{2,0}^{(1)},\quad 
    \theta^{(2)}_2=\theta_{2,0}^{(2)},\quad
    \theta^{(3)}_2=\theta_{2,0}^{(3)}
\end{align*}
and
\begin{align}
    \theta^{(13)}_2=\theta_{2,0}^{(13)},\quad 
    \theta^{(14)}_2=\theta_{2,0}^{(14)},\quad
    \theta^{(15)}_2=\theta_{2,0}^{(15)},\quad
    \theta^{(16)}_2=\theta_{2,0}^{(16)},\quad
    \theta^{(17)}_2=\theta_{2,0}^{(17)}.\label{13-17}
\end{align}
The (1,1) and (1,5)-th elements of 
\begin{align}
    {\bf{\Sigma}}^{12}_{2}(\theta_{2})={\bf{\Sigma}}^{12}_{2}(\theta_{2,0}) \label{sigma12}
\end{align}
are
\begin{align*}
    (\theta_2^{(11)}-\theta_{2,0}^{(11)})\theta_{2,0}^{(13)}=0,\quad
    (\theta_2^{(12)}-\theta_{2,0}^{(12)})\theta_{2,0}^{(13)}=0,
\end{align*}
and $\theta^{(13)}_{2,0}$ is not zero, which gives
\begin{align}
    \theta_2^{(11)}=\theta_{2,0}^{(11)},\quad
    \theta_2^{(12)}=\theta_{2,0}^{(12)}. \label{11,12}
\end{align}
Since the (1,2), (1,3), (1,4), (1,7) and (1,8)-th elements of (\ref{sigma12}) are
\begin{align*}
    &(\theta^{(4)}_{2}-\theta^{(4)}_{2,0})\theta_{2,0}^{(11)}\theta_{2,0}^{(13)}=0,\\
    &(\theta^{(5)}_{2}-\theta^{(5)}_{2,0})\theta_{2,0}^{(11)}\theta_{2,0}^{(13)}=0,\\
    &(\theta^{(6)}_{2}-\theta^{(6)}_{2,0})\theta_{2,0}^{(11)}\theta_{2,0}^{(13)}=0,\\
    &(\theta^{(9)}_{2}-\theta^{(9)}_{2,0})\theta_{2,0}^{(12)}\theta_{2,0}^{(13)}=0
\end{align*}
and
\begin{align*}
    &(\theta^{(10)}_{2}-\theta^{(10)}_{2,0})\theta_{2,0}^{(12)}\theta_{2,0}^{(13)}=0,
\end{align*}
by using the fact that $\theta_{2,0}^{(11)}$, $\theta_{2,0}^{(12)}$ and $\theta^{(13)}_{2,0}$ are not zero, we have
\begin{align}
    \theta_2^{(4)}=\theta_{2,0}^{(4)},\quad
    \theta_2^{(5)}=\theta_{2,0}^{(5)},\quad
    \theta_2^{(6)}=\theta_{2,0}^{(6)},\quad
    \theta_2^{(9)}=\theta_{2,0}^{(9)},\quad
    \theta_2^{(10)}=\theta_{2,0}^{(10)}.\label{4-10}
\end{align}
The (1,2) and (5,7)-th elements of 
\begin{align}
    {\bf{\Sigma}}^{22}_{2}(\theta_{2})={\bf{\Sigma}}^{22}_{2}(\theta_{2,0}) \label{sigma22}
\end{align}
are
\begin{align*}
    (\theta_2^{(26)}-\theta_{2,0}^{(26)})\theta_{2,0}^{(4)}=0
\end{align*}
and
\begin{align*}
    (\theta_2^{(27)}-\theta_{2,0}^{(27)})\theta_{2,0}^{(9)}=0,
\end{align*}
and $\theta^{(4)}_{2,0}$ and $\theta^{(9)}_{2,0}$ are not zero, which implies
\begin{align}
    \theta_2^{(26)}=\theta_{2,0}^{(26)},\quad
    \theta_2^{(27)}=\theta_{2,0}^{(27)}.\label{26,27}
\end{align}
Note that (\ref{13-17}), (\ref{11,12}) and (\ref{26,27}) lead to
\begin{align*}
    {\bf{\Gamma}}_2^{\theta}{\bf{\Sigma}}^{\theta}_{\xi\xi,2}{\bf{\Gamma}}_2^{\theta\top}+{\bf{\Sigma}}^{\theta}_{\zeta\zeta,2}={\bf{\Gamma}}_{2,0}^{\theta}{\bf{\Sigma}}^{\theta}_{\xi\xi,2,0}{\bf{\Gamma}}_{2,0}^{\theta\top}+{\bf{\Sigma}}^{\theta}_{\zeta\zeta,2,0}.
\end{align*}
The (1,6) and (5,6)-th elements of (\ref{sigma22}) are
\begin{align*}
    \begin{pmatrix}
        1 & 0 
    \end{pmatrix}({\bf{\Gamma}}_2^{\theta}{\bf{\Sigma}}^{\theta}_{\xi\xi,2}{\bf{\Gamma}}_2^{\theta\top}+{\bf{\Sigma}}^{\theta}_{\zeta\zeta,2})\begin{pmatrix}
        \theta_2^{(7)}\\
        \theta_2^{(8)}
    \end{pmatrix}=\begin{pmatrix}
        1 & 0 
    \end{pmatrix}({\bf{\Gamma}}_{2,0}^{\theta}{\bf{\Sigma}}^{\theta}_{\xi\xi,2,0}{\bf{\Gamma}}_{2,0}^{\theta\top}+{\bf{\Sigma}}^{\theta}_{\zeta\zeta,2,0})\begin{pmatrix}
        \theta_{2,0}^{(7)}\\
        \theta_{2,0}^{(8)}
    \end{pmatrix}
\end{align*}
and 
\begin{align*}
    \begin{pmatrix}
        0 & 1
    \end{pmatrix}({\bf{\Gamma}}_2^{\theta}{\bf{\Sigma}}^{\theta}_{\xi\xi,2}{\bf{\Gamma}}_2^{\theta\top}+{\bf{\Sigma}}^{\theta}_{\zeta\zeta,2})\begin{pmatrix}
        \theta_2^{(7)}\\
        \theta_2^{(8)}
    \end{pmatrix}=\begin{pmatrix}
        1 & 0 
    \end{pmatrix}({\bf{\Gamma}}_{2,0}^{\theta}{\bf{\Sigma}}^{\theta}_{\xi\xi,2,0}{\bf{\Gamma}}_{2,0}^{\theta\top}+{\bf{\Sigma}}^{\theta}_{\zeta\zeta,2,0})\begin{pmatrix}
        \theta_{2,0}^{(7)}\\
        \theta_{2,0}^{(8)}
    \end{pmatrix},
\end{align*}
so that
\begin{align*}
    ({\bf{\Gamma}}_{2,0}^{\theta}{\bf{\Sigma}}^{\theta}_{\xi\xi,2,0}{\bf{\Gamma}}_{2,0}^{\theta\top}+{\bf{\Sigma}}^{\theta}_{\zeta\zeta,2,0})\begin{pmatrix}
        \theta_2^{(7)}-\theta_{2,0}^{(7)}\\
        \theta_2^{(8)}-\theta_{2,0}^{(8)}
    \end{pmatrix}=0.
\end{align*}
Since ${\bf{\Sigma}}^{\theta}_{\zeta\zeta,2,0}$ is positive definite, we have
\begin{align*}
    {\bf{\Gamma}}_{2,0}^{\theta}{\bf{\Sigma}}^{\theta}_{\xi\xi,2,0}{\bf{\Gamma}}_{2,0}^{\theta\top}+{\bf{\Sigma}}^{\theta}_{\zeta\zeta,2,0}>0,
\end{align*}
which yields
\begin{align}
    \theta_2^{(7)}=\theta_{2,0}^{(7)},\quad \theta_2^{(8)}=\theta_{2,0}^{(8)}. \label{7,8}
\end{align}
As it follows from (\ref{13-17}), (\ref{11,12}), (\ref{4-10}), (\ref{26,27}) and (\ref{7,8}) that 
\begin{align*}
    {\bf{\Lambda}}_2^{\theta}\bigl({\bf{\Gamma}}_2^{\theta}{\bf{\Sigma}}^{\theta}_{\xi\xi,2}{\bf{\Gamma}}_2^{\theta\top}+{\bf{\Sigma}}^{\theta}_{\zeta\zeta,2}\bigr){\bf{\Lambda}}_2^{\theta\top}={\bf{\Lambda}}_{2,0}^{\theta}\bigl({\bf{\Gamma}}_{2,0}^{\theta}{\bf{\Sigma}}^{\theta}_{\xi\xi,2,0}{\bf{\Gamma}}_{2,0}^{\theta\top}+{\bf{\Sigma}}^{\theta}_{\zeta\zeta,2,0}\bigr){\bf{\Lambda}}_{2,0}^{\theta\top},
\end{align*}
(\ref{sigma22}) implies
\begin{align*}
    \theta_2^{(i)}=\theta_{2,0}^{(i)}
\end{align*}
for $i=18,\ldots,25$, which completes the proof.
\end{document}